\documentclass[12pt]{amsart}

\usepackage{graphicx}

\usepackage{subcaption} 
\usepackage{xfrac}   
\usepackage{faktor}
\usepackage{tikz-cd}
\usepackage{mathrsfs}
\usepackage{mathtools}
\usepackage{hyperref}
\usepackage{amsmath}
\usepackage{amssymb}
\hypersetup{bookmarks=true,
	unicode=true,
	colorlinks=true,
	citecolor=black,
	linkcolor=black,
	urlcolor=black,
	plainpages=false,
	pdfpagelabels=true}

 \usepackage{url}	
 \allowdisplaybreaks 

\usepackage{tikz-cd}
\usepackage{pgf}

\usepackage{xcolor}

\usepackage{comment} 

\usepackage[all]{xy}

\newtheorem{teo}{Theorem}[section]
\newtheorem{theorem}[teo]{Theorem}

\newtheorem{corollary}[teo]{Corollary}

\newtheorem{lemma}[teo]{Lemma}
\newtheorem{prop}[teo]{Proposition}
\newtheorem{proposition}[teo]{Proposition}

\theoremstyle{definition}
\newtheorem{definition}[teo]{Definition}

\newtheorem{example}[teo]{Example}

\theoremstyle{remark}
\newtheorem{remark}{Remark}

\numberwithin{figure}{section}

\newcommand{\Ba}{\mathcal{B}}

\newcommand{\bbar}{\overline}

\newcommand{\CC}{\mathcal{C}}

\newcommand{\cat}{\mathop{\mathbf{Cat}}}
\newcommand{\ccat}{\mathop{\mathrm{ccat}}}

\newcommand{\Dc}{\mathcal{D}}

\newcommand{\E}{\mathcal{E}}

\newcommand{\op}{\mathrm{op}}

\newcommand{\Sets}{\mathbf{Set}}

\newcommand{\Uc}{\mathcal{U}}

\newcommand{\Vc}{\mathcal{V}}

\newcommand{\Hom}{\mathrm{Hom}}

\begin{document}

\title[Directed Homotopy, Sectional Invariants, and Databases]{Directed Homotopy, Sectional Invariants, and Functorial Databases}
\thanks{\textbf{Funding}: This research did not receive funding.}

\author[Isaac Carcacía-Campos]{%
	Isaac Carcacía-Campos
}

 \address{%
           	Isaac Carcacía-Campos
            \\
              Departamento de Matem\'aticas, Universidade de Santiago de Compostela, 15782-SPAIN}
               \email{isaac.c.campos@usc.es}


\begin{abstract} 
A database instance on a small category may be represented as a set-valued functor or, equivalently, as a discrete opfibration. Its sections correspond to globally coherent choices of records. When no global section exists, we measure the failure of global coherence by the minimum number of subcategories on which coherent choices can be made.

Regarding natural transformations as directed homotopies, we introduce right and left directed fibrations and relate them to Grothendieck opfibrations and fibrations. We define directed versions of Lusternik-Schnirelmann category and sectional category and establish their invariance and comparison properties. Every functor admits a Grothendieck opfibration model on which directed sectional category is computed by strict local sections, together with a canonical discrete approximation obtained from connected components of comma categories.

For functorial databases, we study directed sectional category under decomposition, iteration, and data migration, and characterize initial objects of finite connected acyclic schemas through the existence of global sections of objectwise non-empty databases.
\end{abstract}

\keywords{Directed homotopy, small categories, Grothendieck opfibrations,
Lusternik--Schnirelmann category, sectional category, functorial databases,
category of elements, data migration.}

\subjclass[2020]{
Primary 18A25; Secondary 18A30, 18B40, 55M30, 68P15.}

\maketitle


\section*{Introduction}

In the functorial model of databases \cite[chapter 3]{SevenSketches} a database schema is represented by a
small category \(\CC\), while an instance on that schema is a functor
\[
X\colon\CC\longrightarrow\Sets.
\] The objects of \(\CC\) represent
entity types, its morphisms represent functional attributes, \(X(c)\) is the set of objects of type \(c\) and each morphism \(X(f)\) from a morphism \(f \colon c \to d\) represents a morphism between the sets of entities of type $X(c)$ and $X(d)$.

For example, the database of a veterinary clinic may contain entity types \(\mathrm{Dogs}\) and \(\mathrm{Humans}\), together with an ownership
morphism \(o\colon\mathrm{Dogs}\to\mathrm{Humans}\). An instance assigns
sets of registered dogs and humans, and the function
\(X(o)\colon X(\mathrm{Dogs})\to X(\mathrm{Humans})\) assigns to every
registered dog its registered owner.

The category of
elements of \(X\) determines a way of seeing a database as a category over the base category \(\CC\)
\[
\pi_X\colon\int^\CC X\longrightarrow\CC.
\]
where the functor \(\pi_X\) has the nice property of being a discrete opfibration.

A section of \(\pi_X\) as a functor is equivalent to a family of elements
\(x_c\in X(c)\), indexed by the objects of \(\CC\), such that
\[
X(f)(x_c)=x_d
\]
for every morphism \(f\colon c\to d\). Thus, a section represents a
globally coherent choice of records across the schema. More general
database queries and constraints have also been described categorically in
terms of lifting problems
\cite{SpivakDataMigration,SpivakDatabaseQueries}.

Moreover, for a database \(X\colon\CC\to\Sets\), global sections of
\(\pi_X\) are naturally identified with the elements of \(\lim_\CC X\).
Thus, the existence of a globally coherent selection is an instance of the
general problem of deciding whether the limit of a diagram is
non-empty. This problem has recently been studied from an algorithmic
perspective \cite{althaus2026parameterizedalgorithmtestinglimit} in the finite (in the sense of having finite diagrams and finite sets), and related existence problems for
compatible families arise naturally in combinatorics and constraint
satisfaction \cite{hadek2026categoricalperspectiveconstraintsatisfaction}. 

This leads to a quantitative refinement of the global section problem: when no globally coherent choice exists, we measure how far the database is from admitting one by determining the minimum number of subcategories covering the schema on which coherent choices can be made.

The aim of this article is to introduce categorical invariants that measure
this obstruction. The homotopical structure relevant to this problem is
intrinsically asymmetric. A natural transformation
\(\alpha\colon F\Rightarrow G\) has a prescribed orientation and need not
admit a transformation in the opposite direction. Rather than passing to
zigzags of natural transformations, as in the usual homotopy theory of
small categories \cite{LEEHomotopy}, we retain this orientation and regard \(\alpha\) as a directed homotopy from \(F\) to \(G\).

This point of view belongs to a broader tradition of using small
categories as combinatorial models for homotopical phenomena, which we
here develop in an explicitly directed setting.
The nerve and classifying-space constructions associate an ordinary
homotopy type with every small category
\cite{QuillenKTheory,ClassifyingSpace} and underlie model structures on
small categories \cite{Thomason,Cat_Miniam}.Passage to the ordinary homotopy type, however, does not
retain the full orientation of the morphisms. Related asymmetric phenomena
appear in directed algebraic topology and concurrency theory
\cite{DirectedConcurrency,GrandisDirected}, in the directed homotopy
hypothesis \cite{DirectedHomotopyHypothesis}, and in directed versions of
type theory
\cite{DirectedHomotopyHypothesis,DirectedTypeTheorySynthetic}.
Our approach is more elementary: we work with ordinary small categories,
natural transformations, and Grothendieck fibrations, without introducing
higher-categorical or foundational machinery.

Directed homotopy for small categories was developed by Grandis
\cite{GrandisShape}, who regarded a natural transformation as an oriented
homotopy and introduced future and past equivalences of categories. Building on this viewpoint, we consider one-sided notions of domination,
homotopy equivalence, and contractibility, closely related to his past and
future equivalences. We also use directed homotopies to define directed
fibrations and numerical covering invariants.

The asymmetry also produces two homotopy lifting properties. Depending on
whether the prescribed lift lies over the source or the target of a natural
transformation, we obtain right and left directed fibrations. These notions
are closely related to Grothendieck opfibrations and fibrations,
respectively \cite{GrothendieckRevetements}. We prove that every
Grothendieck opfibration is a right directed fibration and, dually, that
every Grothendieck fibration is a left directed fibration. We also construct
an opfibrational replacement: every functor factors as a right directed
homotopy equivalence followed by a Grothendieck opfibration.

Using this framework, we introduce two numerical invariants. The first is
the directed Lusternik--Schnirelmann category
\(\mathrm{dcat}(\CC)\), defined by covering \(\CC\) with subcategories
whose inclusions admit directed homotopies from constant functors. The
second is the directed sectional category \(\mathrm{dsecat}(P)\) of a
functor \(P\colon\E\to\Ba\), defined by requiring local right homotopy
sections. These invariants are one-sided analogues of categorical
Lusternik--Schnirelmann category and \v{S}varc genus previously studied in
the undirected setting
\cite{LS-Tanaka,Homotopic_Distance,Baues-Isaac,carcaciacampos2026weakstrongfibrationsfunctors}.

We establish their basic monotonicity and invariance properties. Directed
Lusternik--Schnirelmann category is invariant under right directed
homotopy equivalence, while directed sectional category is invariant under
compatible directed changes of the total and base categories. In
particular, every functor admits a Grothendieck opfibration model with the
same directed sectional category, on which local directed homotopy
sections can be strictified. We also obtain a canonical discrete lower
approximation from the comprehensive factorization \cite{StreetWalters}.

For a database \(X\colon\CC\to\Sets\), the associated projection
\(\pi_X\colon\int^\CC X\to\CC\) is a discrete opfibration. Hence
\(\mathrm{dsecat}(\pi_X)\) is computed by strict local sections and
measures the minimum normalized number of subcategories needed to cover
the schema so that coherent choices of records exist locally.

The article is organized as follows. Section~\ref{section:Directed_Homotpy} introduces directed
homotopy, domination, and contractibility. Section~\ref{sec:directed_fibrations} develops directed
fibrations, compares them with Grothendieck fibrations and opfibrations, and
constructs an opfibrational factorization. Section~\ref{sec:discrete_opfibrations_databases} describes discrete
opfibrations and their interpretation as functorial databases. Section~\ref{sec:categorical_covers}
introduces the morphism covers used in the sequel. Section~\ref{sec:directed_LS_category} introduces directed Lusternik--Schnirelmann category and studies
its invariance, bounds, and computations for acyclic categories, posets,
and monoids.  Section~\ref{sec:directed_sectional_category} develops directed sectional category, establishes its strictification and homotopy-invariance properties, compares its opfibrational and component models, and relates it to directed category and the undirected \v{S}varc genus. Section~\ref{sec:databases_sectional} applies the theory to indecomposable and iterated databases and to data migration, concluding with a characterization of initial objects through global sections.
\subsection*{Conventions}

Throughout the article, all categories are assumed to be small. Identity
morphisms and identity functors are denoted by \(1_c\) and \(1_\CC\),
respectively. Sometimes we will avoid writing the composition \(\circ\) to not saturate the reader.

A category \(\CC\) is \emph{acyclic} \cite[chapter 10]{Kozlov} if every endomorphism is an identity
and, for distinct objects \(c,d\in\CC\), the existence of a morphism
\(c\to d\) implies that there is no morphism \(d\to c\).

\section{Directed Homotopy in Small Categories}\label{section:Directed_Homotpy}

Homotopies between functors were introduced by Lee
\cite{LEEHomotopy} and subsequently studied in connection with categorical
models of homotopy theory \cite{Cat_Miniam}. The usual categorical notion
is the equivalence relation generated by natural transformations and hence
allows zigzags whose arrows may point in either direction. Consequently,
the orientation of the individual natural transformations is ultimately
forgotten. In this article, we retain this orientation and regard a natural
transformation itself as a directed homotopy. This point of view goes back
to Grandis \cite{GrandisShape}, who used the walking arrow category
\(\mathbb{I}_1=\{0\to1\}\) as a directed interval and identified directed
homotopies between functors with natural transformations.

\begin{definition}
Let \(F,G\colon\CC\to\Dc\) be functors. A \emph{directed homotopy} from
\(F\) to \(G\) is a natural transformation \(\alpha\colon F\Rightarrow G\).
We write \(F\leq_d G\) when such a natural transformation exists.
\end{definition}

This definition admits the expected cylinder description.  The following description is standard in directed
homotopy for categories; see \cite[Section~1.5]{GrandisShape}.

\begin{prop}\label{prop:directed_homotopy_cylinder}
Let \(F,G\colon\CC\to\Dc\) be functors. Then \(F\leq_d G\) if and only if
there exists a functor \(H\colon\CC\times\mathbb{I}_1\to\Dc\) such that
\(H(-,0)=F\) and \(H(-,1)=G\).
\end{prop}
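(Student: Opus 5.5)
We prove both directions by writing down the explicit correspondence between natural transformations \(F\Rightarrow G\) and functors \(H\colon\CC\times\mathbb{I}_1\to\Dc\) restricting to \(F\) and \(G\). The one fact we need about \(\mathbb{I}_1\) is that its only morphisms are \(1_0\), \(1_1\) and the arrow \(u\colon 0\to1\). Hence every morphism of \(\CC\times\mathbb{I}_1\) has one of three forms: \((f,1_0)\), \((f,1_1)\), or \((f,u)\colon(c,0)\to(d,1)\), where \(f\colon c\to d\) in \(\CC\).

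\emph{From a homotopy to a natural transformation.} Suppose \(H\) is given, and set \(\alpha_c=H(1_c,u)\colon F(c)\to G(c)\). In \(\CC\times\mathbb{I}_1\), for every \(f\colon c\to d\) we have
\[
(1_d,u)\circ(f,1_0)=(f,u)=(f,1_1)\circ(1_c,u).
\]
Applying \(H\) to this equation gives \(\alpha_d\circ F(f)=G(f)\circ\alpha_c\). This is exactly naturality, so \(F\leq_d G\).

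\emph{From a natural transformation to a homotopy.} Conversely, given \(\alpha\colon F\Rightarrow G\), define \(H\) as follows.
\begin{itemize}
\item On objects: \(H(c,0)=F(c)\) and \(H(c,1)=G(c)\).
\item On the first two kinds of morphisms: \(H(f,1_0)=F(f)\) and \(H(f,1_1)=G(f)\).
\item On the third kind: \(H(f,u)=\alpha_d\circ F(f)\), which equals \(G(f)\circ\alpha_c\) by naturality.
\end{itemize}
Identities are preserved because \(F\) and \(G\) preserve them.

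For composition we check each possible type of composable pair. Pairs lying entirely at level \(0\) or entirely at level \(1\) reduce to the functoriality of \(F\) or \(G\). The mixed pairs are \((g,1_1)\circ(f,u)\) and \((g,u)\circ(f,1_0)\); no other mixed composite exists, since there is no arrow \(1\to0\) in \(\mathbb{I}_1\).
\begin{itemize}
\item For \((g,1_1)\circ(f,u)\), use the expression \(G(f)\circ\alpha_c\) for \(H(f,u)\): then \(G(g)\circ G(f)\circ\alpha_c=G(gf)\circ\alpha_c=H(gf,u)\).
\item For \((g,u)\circ(f,1_0)\), use the expression \(\alpha\circ F(-)\): then \(\alpha_e\circ F(g)\circ F(f)=\alpha_e\circ F(gf)=H(gf,u)\).
\end{itemize}
So \(H\) is a functor with \(H(-,0)=F\) and \(H(-,1)=G\).

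The only step with any real content is this composition check for mixed pairs. It goes through precisely because \(H(f,u)\) has the two equal expressions supplied by naturality, and in each case we choose the one that matches the composite. Alternatively, the whole statement follows from the cartesian closedness of \(\cat\): a functor \(\CC\times\mathbb{I}_1\to\Dc\) is the same as a functor \(\mathbb{I}_1\to\Dc^{\CC}\), that is, a single morphism \(F\to G\) of the functor category, which is a natural transformation.
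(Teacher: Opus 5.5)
Your proposal is correct and follows the same route as the paper: both directions rest on the correspondence \(\alpha_c = H(1_c,u)\), where the paper writes \(s_0\) for your \(u\), and naturality is exactly what makes \(H\) a functor. The only difference is that you carry out the case-by-case composition check on the mixed pairs, which the paper asserts in a single line.
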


\begin{proof}
A natural transformation \(\alpha\colon F\Rightarrow G\) determines
\(H\) by setting \(H(c,0)=F(c)\), \(H(c,1)=G(c)\), and
\(H(1_c,s_0)=\alpha_c\); its values on the two copies of \(\CC\) are
prescribed by \(F\) and \(G\). The naturality of \(\alpha\) is precisely the
condition required for \(H\) to be functorial.

Conversely, if such a functor \(H\) is given, the morphisms
\(\alpha_c=H(1_c,s_0) \colon \allowbreak F(c)\to G(c)\) form a natural transformation
\(\alpha\colon F\Rightarrow G\).
\end{proof}
The functor category \([\CC,\Dc]\) induces a preorder on its objects by
declaring
\[
F\leq_d F'
\quad\Longleftrightarrow\quad
\operatorname{Nat}(F,F')\neq\varnothing.
\]
Reflexivity and transitivity follow from identity transformations and
vertical composition, respectively. Horizontal composition shows that
this preorder is compatible with composition of functors: if
\(F\leq_d F'\), then
\[
F \circ G\leq_d F'\circ G
\qquad\text{and}\qquad
K\circ F\leq_d K \circ F'
\]
whenever the corresponding composites are defined. More generally, if
\(F\leq_d F'\) and \(G\leq_d G'\), then
\[
GF\leq_d G'F'
\]
whenever these composites are defined. Thus, the hom-categories of
\(\mathbf{Cat}\) become preordered by directed homotopy, and composition
is monotone in both variables.

In general, this preorder need not be symmetric or antisymmetric. If
\(\Dc\) is acyclic, however, then \([\CC,\Dc]\) is also acyclic, and the
induced preorder is a partial order. We next introduce one-sided versions
of domination and homotopy equivalence.

\begin{definition}\label{def:directed_domination}
Let \(\CC\) and \(\Dc\) be categories.

We say that \(\CC\) is \emph{right dominated} by \(\Dc\), and write \(\CC\unlhd_R\Dc\), if there exist functors \(F\colon\CC\to\Dc\) and \(G\colon\Dc\to\CC\) such that \(G\circ F\leq_d1_\CC\).

Dually, \(\CC\) is \emph{left dominated} by \(\Dc\), written
\(\CC\unlhd_L\Dc\), if there exist such functors satisfying
\(1_\CC\leq_dG\circ F\).

The categories \(\CC\) and \(\Dc\) are \emph{right directed homotopy
equivalent}, written \(\CC\simeq_R\Dc\), if
\(\CC\unlhd_R\Dc\) and \(\Dc\unlhd_R\CC\). Left directed homotopy
equivalence, denoted by \(\CC\simeq_L\Dc\), is defined dually.
\end{definition}

\begin{remark}
Our right and left directed homotopy equivalences are related,
respectively, to the past and future equivalences introduced by Grandis
\cite[Section~2]{GrandisShape}. More precisely, the orientations of the
natural transformations agree, but Grandis additionally imposes coherence
identities relating the two transformations. No such coherence conditions
are required in the present article.
\end{remark}

\subsection{Contractibility}\label{subsec:cont}

Past and future contractibility were studied by Grandis, who related them
to initial and terminal objects, respectively
\cite[Section~2.6]{GrandisShape}. We now consider the corresponding
one-sided notions without imposing the coherence conditions of a past or
future equivalence. Let \(\bullet\) denote the category with one object and one morphism. A
functor \(\CC\to\bullet\) is unique, whereas a functor
\(\bullet\to\CC\) amounts to choosing an object of \(\CC\). Directed
homotopy equivalence with \(\bullet\) therefore leads to the following
notions.

\begin{definition}\label{def:homotopically_initial_terminal}
An object \(i\in\CC\) is \emph{homotopically initial} if there exists a
natural transformation \(\bar{i}\Rightarrow1_\CC\). Dually, an object
\(t\in\CC\) is \emph{homotopically terminal} if there exists a natural
transformation \(1_\CC\Rightarrow\bar{t}\).
\end{definition}

Thus, \(\CC\) is right directed homotopy equivalent to \(\bullet\) if and
only if it has a homotopically initial object. The corresponding left-handed
statement is dual.

\begin{prop}\label{prop:acyclic_directed_contractibility}
Let \(\CC\) be an acyclic category. Then \(\CC\simeq_R\bullet\) if and
only if \(\CC\) has an initial object. Dually,
\(\CC\simeq_L\bullet\) if and only if \(\CC\) has a terminal object.
\end{prop}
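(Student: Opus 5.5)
By the remark following Definition~\ref{def:homotopically_initial_terminal}, \(\CC\simeq_R\bullet\) holds if and only if \(\CC\) has a homotopically initial object. Indeed, \(\bullet\unlhd_R\CC\) is automatic, and \(\CC\unlhd_R\bullet\) says exactly that \(\bar{i}\leq_d 1_\CC\) for some object \(i\). So the plan is to show that, in an acyclic category, an object is homotopically initial if and only if it is initial. The left-handed statement then follows by applying the result to \(\CC^{\op}\): this is again acyclic, and it turns terminal objects into initial ones and reverses natural transformations, so \(1_\CC\Rightarrow\bar t\) becomes \(\bar t\Rightarrow 1_{\CC^{\op}}\).

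\emph{Initial implies homotopically initial.} If \(i\) is initial, let \(\alpha_c\colon i\to c\) be the unique morphism. Naturality asks that \(f\circ\alpha_c=\alpha_d\circ 1_i\) for every \(f\colon c\to d\). Both sides are morphisms \(i\to d\), so they agree by uniqueness. This direction does not use acyclicity.

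\emph{Homotopically initial implies initial.} Let \(\alpha\colon\bar i\Rightarrow 1_\CC\). Existence of a morphism \(i\to c\) for every \(c\) is given by \(\alpha_c\), so the work is uniqueness. First, \(\alpha_i\colon i\to i\) is an endomorphism, hence \(\alpha_i=1_i\) by acyclicity. Next, take any \(f\colon i\to c\). Naturality at \(f\) gives \(f\circ\alpha_i=\alpha_c\circ\bar i(f)=\alpha_c\circ 1_i\), so \(f=\alpha_c\). Therefore every morphism out of \(i\) equals the corresponding component of \(\alpha\), and \(i\) is initial.

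The only delicate step is making \(\alpha_i=1_i\) explicit, since that is where acyclicity enters. Without it the statement fails: in a nontrivial monoid regarded as a one-object category, \(\alpha=1\) works but the object is not initial. Only the "no nonidentity endomorphisms" part of acyclicity is actually needed.
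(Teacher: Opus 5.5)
Your argument is correct and is essentially the paper's proof: acyclicity forces \(\alpha_i=1_i\), naturality at any \(f\colon i\to c\) then gives \(f=\alpha_c\), and conversely the unique morphisms out of an initial object assemble into a transformation \(\bar i\Rightarrow 1_\CC\). You also spell out the reduction of \(\CC\simeq_R\bullet\) to the existence of a homotopically initial object, and the passage to \(\CC^{\op}\) for the dual statement, both of which the paper leaves implicit.

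Your closing counterexample is wrong, however. A component \(s\in M\) of a transformation \(\bar\ast\Rightarrow 1_M\) must satisfy \(ms=s\) for every \(m\in M\), so \(s=1\) works only when \(M\) is trivial. What you need is a right absorbing element, as in the paper's monoid \(\{1,s\}\) with \(s^2=s\).
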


\begin{proof}
We prove the right-handed statement. Suppose that
\(\alpha\colon\bar{i}\Rightarrow1_\CC\). For every object \(c\), the
component \(\alpha_c\colon i\to c\) provides a morphism from \(i\) to \(c\).
Since \(\CC\) is acyclic, \(\alpha_i=1_i\). If \(f\colon i\to c\) is any morphism, naturality with respect to \(f\) yields
\(f\circ\alpha_i=\alpha_c\), and hence \(f=\alpha_c\). Therefore \(i\) is
initial.

Conversely, if \(i\) is initial, the unique morphisms
\(\alpha_c\colon i\to c\) form a natural transformation
\(\bar{i}\Rightarrow1_\CC\). Thus \(\CC\simeq_R\bullet\).
\end{proof}

The acyclicity hypothesis is essential. Let \(M=\{1,s\}\) be the monoid
defined by \(s^2=s\). Regarded as a one-object category, \(M\) admits a
natural transformation \(\bar{\ast}\Rightarrow1_M\) with component \(s\),
because \(m\circ s=s\) for every \(m\in M\). Nevertheless, its unique object
is not initial, since it has two endomorphisms. More generally, a monoid is
right directed contractible precisely when it has a right absorbing element;
the left-handed statement is dual.

\begin{remark}
The terminology of right and left agrees with the convention for absorbing elements. If a monoid \(M\) is regarded as a one-object category, a natural transformation \(\bar{\ast}\Rightarrow1_M\) is determined by an element \(s\in M\) satisfying \(m\circ s=s\) for every \(m\in M\), that is, by a right absorbing element. The dual transformation \(1_M\Rightarrow\bar{\ast}\) corresponds to a left absorbing element.
\end{remark}

\section{Directed Fibrations in Small Categories}\label{sec:directed_fibrations}

The orientation of a directed homotopy distinguishes its two endpoints. A
natural transformation \(F'\Rightarrow G'\) cannot in general be reversed,
so prescribing a lift over \(F'\) and prescribing one over \(G'\) lead to
different lifting problems. We therefore introduce right and left directed
fibrations.

We shall compare these notions with Grothendieck fibrations and
opfibrations
\cite{RosoliniGrothendieckFibrations,RiehlLoregianFibration,Varadarajan}.

\subsection{Directed fibrations}

For a category \(\CC\) and \(\varepsilon\in\{0,1\}\), let
\(\iota_\varepsilon^\CC\colon\CC\to\CC\times\mathbb{I}_1\) be the functor
defined by \(\iota_\varepsilon^\CC(c)=(c,\varepsilon)\). We omit the
superscript whenever the category is clear.

\begin{definition}\label{def:directed_fibration}
Let \(P\colon\E\to\Ba\) be a functor.

We say that \(P\) is a \emph{right directed fibration} if, for every small
category \(\CC\) and every commutative square
\[
\begin{tikzcd}
\CC \arrow[r, "F"] \arrow[d, "\iota_0"'] &
\E \arrow[d, "P"] \\
\CC\times\mathbb{I}_1 \arrow[r, "H"'] &
\Ba ,
\end{tikzcd}
\]
there exists a functor
\(\widetilde H\colon\CC\times\mathbb{I}_1\to\E\) such that
\(P\widetilde H=H\) and \(\widetilde H\iota_0=F\).

Dually, \(P\) is a \emph{left directed fibration} if it has the analogous
lifting property with respect to \(\iota_1\).
\end{definition}

\begin{remark}\label{Rem:Lift_Natural}
The right lifting property can equivalently be expressed in terms of
natural transformations. Suppose that
\(F',G'\colon\CC\to\Ba\), that
\(\alpha'\colon F'\Rightarrow G'\), and that
\(F\colon\CC\to\E\) satisfies \(PF=F'\). Then there must exist a functor
\(G\colon\CC\to\E\) and a natural transformation
\(\alpha\colon F\Rightarrow G\) such that
\(PG=G'\) and \(P(\alpha_c)=\alpha'_c\) for every \(c\in\CC\).

For the left lifting property, one starts instead with a lift \(G\) of
\(G'\) and asks for a lift \(F\) of \(F'\), together with a natural
transformation \(\alpha\colon F\Rightarrow G\) lifting \(\alpha'\).
\end{remark}

\subsection{Grothendieck fibrations and opfibrations}

Grothendieck opfibrations lift morphisms covariantly, whereas Grothendieck
fibrations provide the corresponding contravariant lifting property. We
recall only the opfibrational definitions; the fibrational versions are
dual. See
\cite{RosoliniGrothendieckFibrations,RiehlLoregianFibration,Varadarajan}
for further details.

\begin{definition}
Let \(P\colon\E\to\Ba\) be a functor. A morphism
\(\varphi\colon e_1\to e_2\) in \(\E\) is \emph{\(P\)-opcartesian} if,
for every morphism \(\beta\colon e_1\to e\) in \(\E\) and every morphism
\(\overline{\alpha}\colon P(e_2)\to P(e)\) in \(\Ba\) satisfying
\(\overline{\alpha}\circ P(\varphi)=P(\beta)\), there exists a unique
morphism \(\alpha\colon e_2\to e\) such that
\(\alpha\circ\varphi=\beta\) and
\(P(\alpha)=\overline{\alpha}\).
Equivalently, every diagram of the following form admits a unique dashed
factorization.
			\[\begin{tikzcd}
				e_1 \ar[d,"\varphi"']\arrow[r,"\beta"] &e \\
				e_2 \arrow[ur,dashrightarrow,"\alpha"']&  
			\end{tikzcd}
			\quad \quad
			\begin{tikzcd}
				Pe_1 \arrow[d,"P\varphi"'] \arrow[r,"P\beta"] &Pe \\
				Pe_2\arrow[ur,"{\bbar\alpha}"']& 
			\end{tikzcd}
			\]
			
\end{definition}

\begin{definition}
A functor \(P\colon\E\to\Ba\) is a \emph{Grothendieck opfibration} if,
for every morphism \(\overline{\varphi}\colon b_1\to b_2\) in \(\Ba\)
and every \(e_1\in\E\) with \(P(e_1)=b_1\), there exists a
\(P\)-opcartesian morphism \(\varphi\colon e_1\to e_2\) satisfying
\(P(\varphi)=\overline{\varphi}\).
\end{definition}

\begin{remark}
The dual notions of \(P\)-cartesian morphism and Grothendieck fibration
are obtained by reversing all arrows. Equivalently,
\(P\colon\E\to\Ba\) is a Grothendieck fibration if and only if
\(P^{\op}\colon\E^{\op}\to\Ba^{\op}\) is a Grothendieck opfibration.
We shall state most results in their opfibrational form; the corresponding
fibrational statements follow by duality. See
\cite{RosoliniGrothendieckFibrations,GrothendieckRevetements,RiehlLoregianFibration,Varadarajan} for further background.
\end{remark}

The Grothendieck construction gives a systematic method for producing
opfibrations from covariantly indexed categories.

\begin{definition}
Let \(P\colon\Ba\to\cat\) be a functor. Its \emph{Grothendieck
construction}, denoted by \(\int^\Ba P\), is the category whose objects are
pairs \((b,x)\), where \(b\in\Ba\) and \(x\in P(b)\). A morphism
\[
(\varphi,f)\colon(b,x)\longrightarrow(b',y)
\]
consists of a morphism \(\varphi\colon b\to b'\) in \(\Ba\) and a
morphism \(f\colon P(\varphi)(x)\to y\) in \(P(b')\).

If
\((\psi,g)\colon(b',y)\to(b'',z)\), their composite is
\[
(\psi,g)\circ(\varphi,f)
=
\bigl(\psi\circ\varphi,\,
g\circ P(\psi)(f)\bigr).
\]
The projection
\(\pi_P\colon\int^\Ba P\to\Ba\) is defined by
\(\pi_P(b,x)=b\) and \(\pi_P(\varphi,f)=\varphi\).
\end{definition}

\begin{prop}\label{prop:GrothendiecktConstruction_to_Opfibration}
Let \(P\colon\Ba\to\cat\) be a functor. Then
\(\pi_P\colon\int^\Ba P\to\Ba\) is a split Grothendieck opfibration.
Dually, a functor \(P\colon\Ba^{\op}\to\cat\) determines a split
Grothendieck fibration over \(\Ba\).
\end{prop}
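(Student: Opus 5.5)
To prove the first statement, I would construct explicit opcartesian lifts and then check their universal property by unwinding the composition law of $\int^\Ba P$. Throughout, I use the identities $P(1_b)=1_{P(b)}$ and $P(\psi\circ\varphi)=P(\psi)\circ P(\varphi)$, holding strictly since $P$ is a functor into $\cat$. A preliminary check is that $\int^\Ba P$ is a category. The identity on $(b,x)$ is $(1_b,1_x)$, and associativity of the composition formula reduces to functoriality of each $P(\psi)$ together with $P(\chi)P(\psi)=P(\chi\psi)$. With this in place, $\pi_P$ is evidently a functor.

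\textbf{Lifts.} Given $\varphi\colon b_1\to b_2$ in $\Ba$ and an object $(b_1,x)$, I would take the candidate lift
\[
\widehat\varphi_x=(\varphi,1_{P(\varphi)(x)})\colon(b_1,x)\to(b_2,P(\varphi)(x)),
\]
which clearly satisfies $\pi_P(\widehat\varphi_x)=\varphi$.

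\textbf{Opcartesianness.} This is the main step. Let $\beta=(\gamma,g)\colon(b_1,x)\to(b,z)$, with $g\colon P(\gamma)(x)\to z$, and let $\overline\alpha\colon b_2\to b$ satisfy $\overline\alpha\circ\varphi=\gamma$. Any morphism $\alpha$ with $\pi_P(\alpha)=\overline\alpha$ has the form $(\overline\alpha,h)$ with $h\colon P(\overline\alpha)(P(\varphi)(x))\to z$. Its composite with the lift is
\[
(\overline\alpha,h)\circ(\varphi,1)=\bigl(\overline\alpha\varphi,\;h\circ P(\overline\alpha)(1_{P(\varphi)(x)})\bigr)=(\gamma,h).
\]
Since $P(\overline\alpha)(P(\varphi)(x))=P(\gamma)(x)$ strictly, the equation $(\overline\alpha,h)\circ\widehat\varphi_x=\beta$ holds exactly when $h=g$. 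This gives both existence and uniqueness. The only delicate point is to track that the source object of $h$ coincides with that of $g$, which is where strict functoriality of $P$ is needed.

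\textbf{Splitting.} The chosen lifts form a cleavage. I would verify that $\widehat{(1_b)}_x=(1_b,1_x)$ is an identity, and that
\[
\widehat\psi_{P(\varphi)(x)}\circ\widehat\varphi_x=(\psi\varphi,\,1\circ P(\psi)(1))=\widehat{(\psi\varphi)}_x.
\]
Together these show that the cleavage is split.

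\textbf{The dual statement.} I would obtain it formally. Given $P\colon\Ba^{\op}\to\cat$, compose with the involution $(-)^{\op}\colon\cat\to\cat$ to get $P^{\op}\colon(\Ba^{\op})\to\cat$, viewed as a covariant functor on the base $\Ba^{\op}$. The first part then gives a split opfibration over $\Ba^{\op}$. Taking its opposite yields a split fibration over $\Ba$, by the earlier remark that $P$ is a fibration if and only if $P^{\op}$ is an opfibration. One checks that the opposite of this construction is the usual contravariant Grothendieck construction, whose morphisms are $(\varphi,f)$ with $f\colon x\to P(\varphi)(y)$. Alternatively, the cartesian lifts $(\varphi,1)$ can be verified directly, mirroring the computation above.
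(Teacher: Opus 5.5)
Your proposal is correct and follows essentially the same route as the paper: you use the same chosen lifts $(\varphi,1_{P(\varphi)(x)})$, observe that any factorization over $\overline\alpha$ must be $(\overline\alpha,g)$ because $P(\overline\alpha)(P(\varphi)(x))=P(\gamma)(x)$ holds strictly, and check that these lifts preserve identities and composition. You go slightly further than the paper by computing the composite explicitly and by deriving the fibration case formally through $(-)^{\op}$, where the paper simply says ``dually''.
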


\begin{proof}
Let \(\varphi\colon b_1\to b_2\) be a morphism in \(\Ba\), and let
\((b_1,x)\) lie over \(b_1\). Consider
\[
(\varphi,1_{P(\varphi)(x)})\colon
(b_1,x)\longrightarrow(b_2,P(\varphi)(x)).
\]
We show that this morphism is opcartesian.

Suppose that
\((\beta,g)\colon(b_1,x)\to(b,y)\) and that
\(\overline{\alpha}\colon b_2\to b\) satisfy
\(\overline{\alpha}\circ\varphi=\beta\). Then
\[
g\colon
P(\beta)(x)
=
P(\overline{\alpha})(P(\varphi)(x))
\longrightarrow y,
\]
so \((\overline{\alpha},g)\) is a morphism from
\((b_2,P(\varphi)(x))\) to \((b,y)\). It is the unique morphism over
\(\overline{\alpha}\) whose composite with
\((\varphi,1_{P(\varphi)(x)})\) is \((\beta,g)\). Hence the latter is
opcartesian.

These chosen lifts preserve identities and composition, so
\(\pi_P\) is split.
\end{proof}

\begin{prop}\label{prop:Grothendieck_implies_directed}
Every Grothendieck opfibration is a right directed fibration. Dually, every
Grothendieck fibration is a left directed fibration.

Conversely, a right directed fibration is a Grothendieck opfibration if its
liftings can always be chosen so that every component of the lifted natural
transformation is opcartesian.
\end{prop}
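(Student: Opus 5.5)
I would use the natural-transformation form of the lifting property from Remark~\ref{Rem:Lift_Natural}. Let \(P\colon\E\to\Ba\) be a Grothendieck opfibration, \(\alpha'\colon F'\Rightarrow G'\) a natural transformation between functors \(\CC\to\Ba\), and \(F\colon\CC\to\E\) a functor with \(PF=F'\).

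\textbf{Objects and components.} For each object \(c\in\CC\), I choose a \(P\)-opcartesian lift \(\alpha_c\colon F(c)\to G(c)\) of \(\alpha'_c\colon F'(c)\to G'(c)\). This defines \(G\) on objects, with \(PG(c)=G'(c)\).

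\textbf{Morphisms.} For \(f\colon c\to d\) in \(\CC\), consider \(\beta=\alpha_d\circ F(f)\colon F(c)\to G(d)\). Naturality of \(\alpha'\) gives
\[
G'(f)\circ\alpha'_c=\alpha'_d\circ F'(f)=P(\beta).
\]
Opcartesianness of \(\alpha_c\) therefore produces a unique \(G(f)\colon G(c)\to G(d)\) with \(P(G(f))=G'(f)\) and \(G(f)\circ\alpha_c=\alpha_d\circ F(f)\). This equation is exactly naturality of \(\alpha\).

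\textbf{Functoriality of \(G\).} This is where the uniqueness clause is needed, and it is the only delicate step. Both \(G(1_c)\) and \(1_{G(c)}\) lie over \(1_{G'(c)}\) and satisfy \((-)\circ\alpha_c=\alpha_c\circ F(1_c)=\alpha_c\), so they coincide. For \(f\colon c\to d\) and \(g\colon d\to e\), both \(G(g)G(f)\) and \(G(gf)\) lie over \(G'(gf)\), and both composed with \(\alpha_c\) give \(\alpha_e F(g)F(f)\). Uniqueness in the opcartesian property applied to \(\alpha_c\) forces them to be equal. Hence \((G,\alpha)\) is the required lift, and \(P\) is a right directed fibration.

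\textbf{The dual statement.} If \(P\) is a Grothendieck fibration, then \(P^{\op}\) is a Grothendieck opfibration. Reversing arrows swaps the roles of \(\iota_0\) and \(\iota_1\), i.e.\ lifting over the source versus over the target of a natural transformation. So \(P^{\op}\) being a right directed fibration gives that \(P\) is a left directed fibration.

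\textbf{The converse.} Given \(\overline\varphi\colon b_1\to b_2\) and \(e_1\) over \(b_1\), take \(\CC=\bullet\), \(F=e_1\), \(F'=b_1\), \(G'=b_2\), and \(\alpha'=\overline\varphi\). The right directed lifting yields a morphism \(e_1\to e_2\) over \(\overline\varphi\). By hypothesis this lift can be chosen opcartesian, which is exactly the defining condition of a Grothendieck opfibration.
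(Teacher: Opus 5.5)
Your proof is correct and follows essentially the same route as the paper: you choose opcartesian lifts componentwise, define \(G\) on morphisms by the opcartesian factorization (using uniqueness for functoriality), and obtain the converse by taking \(\CC=\bullet\). You also write out the identity and composition checks and the duality argument via \(P^{\op}\), both of which the paper leaves implicit.
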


\begin{proof}
Let \(P\colon\E\to\Ba\) be a Grothendieck opfibration. Consider functors
\(F',G'\colon\CC\to\Ba\), a natural transformation
\(\alpha'\colon F'\Rightarrow G'\), and a lift
\(F\colon\CC\to\E\) of \(F'\).

Choose, for every \(c\in\CC\), an opcartesian lift
\(\alpha_c\colon F(c)\to G(c)\) of
\(\alpha'_c\colon F'(c)\to G'(c)\). For a morphism
\(f\colon c\to c'\), naturality of \(\alpha'\) gives
\[
G'(f)\circ\alpha'_c
=
\alpha'_{c'}\circ F'(f).
\]
Since \(\alpha_c\) is opcartesian, there is a unique morphism
\(G(f)\colon G(c)\to G(c')\) over \(G'(f)\) such that
\[
G(f)\circ\alpha_c
=
\alpha_{c'}\circ F(f).
\]
Uniqueness shows that \(G\) preserves identities and composition. Thus
\(G\colon\CC\to\E\) is a functor and the family
\((\alpha_c)_{c\in\CC}\) defines a natural transformation
\(\alpha\colon F\Rightarrow G\) lifting \(\alpha'\). Hence \(P\) is a
right directed fibration.

For the converse, apply the lifting property with
\(\CC=\bullet\). A morphism in \(\Ba\), together with an object of \(\E\)
over its source, determines a natural transformation indexed by
\(\bullet\). By hypothesis, its lift may be chosen opcartesian, which is
precisely the lifting condition for a Grothendieck opfibration.
\end{proof}

\begin{remark}
The proof uses a simultaneous choice of opcartesian lifts. This is
automatic for cloven opfibrations and canonical for split opfibrations.
For arbitrary small opfibrations, it uses the axiom of choice.
\end{remark}

\subsection{Factorization by opfibrations}\label{subsec:factorization_pfibration}

Comma categories play the role of directed homotopy pullbacks in the
framework of Grandis, where they are also used to factor adjunctions and
construct directed models
\cite[Sections~1.6 and~4.4]{GrandisShape}. We use a related comma-category
construction to obtain an opfibrational factorization of an arbitrary
functor. More precisely, we will show that every functor factors as a right directed homotopy equivalence followed by a Grothendieck opfibration.

\begin{proposition}\label{prop:directed_opfibration_factorization}
Every functor \(F\colon\CC\to\Dc\) factors as
\[
\CC\xrightarrow{F_1}E_F\xrightarrow{F_2}\Dc,
\]
where \(F_1\) is a right directed homotopy equivalence and \(F_2\) is a
Grothendieck opfibration.
\end{proposition}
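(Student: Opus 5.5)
We take \(E_F\) to be the comma category \(F\downarrow\Dc\), in the spirit of the comma-category constructions of Grandis recalled above. Its objects are triples \((c,d,u)\) with \(c\in\CC\), \(d\in\Dc\) and \(u\colon F(c)\to d\). A morphism \((g,\psi)\colon(c,d,u)\to(c',d',u')\) is a pair \(g\colon c\to c'\), \(\psi\colon d\to d'\) with \(\psi\circ u=u'\circ F(g)\), and composition is componentwise. We set
\[
F_2(c,d,u)=d,\quad F_2(g,\psi)=\psi,
\qquad
F_1(c)=(c,F(c),1_{F(c)}),\quad F_1(g)=(g,F(g)).
\]
Then \(F_2F_1=F\) on the nose. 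The proof has two independent parts: showing that \(F_1\) is a right directed homotopy equivalence, and showing that \(F_2\) is a Grothendieck opfibration.

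For the equivalence, we use the projection \(R\colon E_F\to\CC\), \(R(c,d,u)=c\). Since \(RF_1=1_\CC\), the identity transformation gives \(RF_1\leq_d 1_\CC\), hence \(\CC\unlhd_R E_F\). For \(E_F\unlhd_R\CC\) we need a natural transformation \(\theta\colon F_1R\Rightarrow 1_{E_F}\). We take
\[
\theta_{(c,d,u)}=(1_c,u)\colon(c,F(c),1_{F(c)})\to(c,d,u).
\]
This is a morphism of \(E_F\) because \(u\circ1_{F(c)}=u\circ F(1_c)\). For naturality along \((g,\psi)\colon(c,d,u)\to(c',d',u')\), the two composites are \((g,\psi\circ u)\) and \((g,u'\circ F(g))\). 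These agree precisely by the defining condition on morphisms of \(E_F\). Therefore \(\CC\simeq_R E_F\), with \(F_1\) and \(R\) as the witnessing functors; this is the sense in which we read ``\(F_1\) is a right directed homotopy equivalence''.

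For the opfibration property, let \(\varphi\colon d\to d'\) be a morphism of \(\Dc\) and \((c,d,u)\) an object over \(d\). Its lift is \((1_c,\varphi)\colon(c,d,u)\to(c,d',\varphi\circ u)\). To check that it is opcartesian, take any \((g,\psi)\colon(c,d,u)\to(c'',d'',w)\) and \(\bar\alpha\colon d'\to d''\) with \(\bar\alpha\circ\varphi=\psi\). The candidate factorization is \((g,\bar\alpha)\). It is a morphism of \(E_F\), because
\[
\bar\alpha\circ\varphi\circ u=\psi\circ u=w\circ F(g).
\]
It composes with \((1_c,\varphi)\) to give \((g,\psi)\). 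It is unique, because any factorization over \(\bar\alpha\) has first component forced by \(g\circ1_c=g\) and second component forced to be \(\bar\alpha\). Alternatively, one may identify \(E_F\) with the Grothendieck construction of \(d\mapsto F\downarrow d\) and invoke Proposition~\ref{prop:GrothendiecktConstruction_to_Opfibration}, but the direct check is shorter.

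No step is a serious obstacle. The only point requiring care is the direction of \(\theta\): it must run from \(F_1R\) to the identity, as in the definition of \(\unlhd_R\), and not the reverse. This is exactly why the comma category \(F\downarrow\Dc\) is used rather than \(\Dc\downarrow F\), which would produce a fibration together with a left equivalence instead.
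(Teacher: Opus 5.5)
Your proposal is correct and follows the paper's proof essentially verbatim. You use the same comma category \(F\downarrow 1_\Dc\), the same projection \(R\) with \(RF_1=1_\CC\), the same transformation \(F_1R\Rightarrow 1_{E_F}\) with components \((1_c,u)\), and the same opcartesian lifts \((1_c,\varphi)\) of \(F_2\).
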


\begin{proof}
Let \(E_F=F\downarrow1_\Dc\). We write its objects as morphisms
\(h\colon F(c)\to d\), and its morphisms as pairs
\[
(g,u)\colon h\longrightarrow h'
\]
satisfying \(uh=h'F(g)\).

Define
\[
F_1(c)=1_{F(c)},
\qquad
F_1(g)=(g,F(g)),
\]
and let \(R\colon E_F\to\CC\) send \(h\colon F(c)\to d\) to \(c\) and
\((g,u)\) to \(g\). Then \(RF_1=1_\CC\), while the morphisms
\[
(1_c,h)\colon F_1R(h)\longrightarrow h
\]
form a natural transformation \(F_1R\Rightarrow1_{E_F}\). Hence \(F_1\)
is a right directed homotopy equivalence.

Define \(F_2(h\colon F(c)\to d)=d\) and \(F_2(g,u)=u\). Then
\(F_2F_1=F\). Given \(u\colon d\to d'\) and an object
\(h\colon F(c)\to d\), the morphism
\[
(1_c,u)\colon h\longrightarrow uh
\]
lies over \(u\). If \((g,v)\colon h\to h'\) lies over \(v=wu\), then
\(h'F(g)=vh=wuh\), so there is a unique morphism
\[
(g,w)\colon uh\longrightarrow h'
\]
over \(w\) whose composite with \((1_c,u)\) is \((g,v)\). Thus
\((1_c,u)\) is opcartesian, and \(F_2\) is a Grothendieck opfibration.
\end{proof}

\begin{remark}
The opfibration \(F_2\) is the Grothendieck construction of the functor
\[
F\downarrow(-)\colon\Dc\longrightarrow\cat,
\qquad
d\longmapsto F\downarrow d.
\]
A morphism \(u\colon d\to d'\) induces a functor
\(F\downarrow d\to F\downarrow d'\) by postcomposition,
\(h\mapsto u\circ h\).
\end{remark}

\begin{remark}
The construction admits a dual version. Using the comma category
\(1_\Dc\downarrow F\), every functor \(F\colon\CC\to\Dc\) factors as a
left directed homotopy equivalence followed by a Grothendieck fibration.
We emphasize the opfibrational factorization because a functor
\(X\colon\CC\to\Sets\), interpreted as a database, determines a discrete
opfibration through its Grothendieck construction.
\end{remark}
\section{Discrete Opfibrations and Databases}
\label{sec:discrete_opfibrations_databases}

A Grothendieck opfibration may be regarded as a family of categories indexed
by the objects of its base. When the fibres are discrete, this family is
set-valued and the lifting property becomes an existence-and-uniqueness
condition.

\begin{definition}\label{def:discrete_opfibration}
A functor \(P\colon\E\to\Ba\) is a \emph{discrete opfibration} if, for every
morphism \(f\colon b\to b'\) in \(\Ba\) and every \(e\in\E\) with
\(P(e)=b\), there exists a unique morphism
\(\overline f\colon e\to e'\) satisfying \(P(\overline f)=f\).
\end{definition}

Thus, both the lift \(\overline f\) and its target are uniquely determined
by \(f\) and \(e\).

\begin{prop}\label{prop:unique_lifts_are_opcartesian}
Every morphism of a discrete opfibration \(P\colon\E\to\Ba\) is
\(P\)-opcartesian. In particular, every discrete opfibration is a
Grothendieck opfibration.
\end{prop}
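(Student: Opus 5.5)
We prove directly that an arbitrary morphism \(\varphi\colon e_1\to e_2\) of \(\E\) is \(P\)-opcartesian; the second assertion then follows at once. Indeed, given \(\overline{\varphi}\colon b_1\to b_2\) and \(e_1\) over \(b_1\), Definition~\ref{def:discrete_opfibration} provides a lift \(\varphi\colon e_1\to e_2\) of \(\overline{\varphi}\). By the first part, this lift is opcartesian, so \(P\) is a Grothendieck opfibration.

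For the first assertion, fix \(\beta\colon e_1\to e\) and \(\overline{\alpha}\colon P(e_2)\to P(e)\) with \(\overline{\alpha}\circ P(\varphi)=P(\beta)\).

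\emph{Existence.} Apply the unique lifting property at \(e_2\) to obtain the unique morphism \(\alpha\colon e_2\to e'\) with \(P(\alpha)=\overline{\alpha}\). The composite \(\alpha\circ\varphi\colon e_1\to e'\) lies over \(\overline{\alpha}\circ P(\varphi)=P(\beta)\). The morphism \(\beta\colon e_1\to e\) also lies over \(P(\beta)\). By uniqueness of lifts starting at \(e_1\), including uniqueness of their targets as noted after Definition~\ref{def:discrete_opfibration}, we get \(e'=e\) and \(\alpha\circ\varphi=\beta\). Hence \(\alpha\colon e_2\to e\) satisfies both required equations.

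\emph{Uniqueness.} Any \(\alpha'\colon e_2\to e\) with \(P(\alpha')=\overline{\alpha}\) is a lift of \(\overline{\alpha}\) starting at \(e_2\). By the uniqueness clause of the definition, \(\alpha'=\alpha\). This holds even without using the condition \(\alpha'\circ\varphi=\beta\).

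The only delicate point is that the lift \(\alpha\) obtained from the definition a priori has some target \(e'\), not necessarily \(e\). This is resolved by the observation above that uniqueness applies to the whole morphism, target included. Once this is made explicit, the rest is a direct check.
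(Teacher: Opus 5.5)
Your proof is correct and is essentially the paper's argument. You take the unique lift \(\alpha\colon e_2\to e'\) of \(\overline{\alpha}\), observe that \(\alpha\circ\varphi\) and \(\beta\) both lift \(P(\beta)\) from \(e_1\), conclude \(e'=e\) and \(\alpha\circ\varphi=\beta\) by uniqueness of lifts (targets included), and get uniqueness of the factorization from uniqueness of lifts at \(e_2\); you also spell out the ``in particular'' clause, which the paper leaves implicit.
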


\begin{proof}
Let \(\varphi\colon e_1\to e_2\), and suppose that
\(\beta\colon e_1\to e\) and
\(\overline\alpha\colon P(e_2)\to P(e)\) satisfy
\(\overline\alpha P(\varphi)=P(\beta)\). Let
\(\alpha\colon e_2\to e'\) be the unique lift of
\(\overline\alpha\) with source \(e_2\).

The morphisms \(\alpha\varphi\) and \(\beta\) have the same source and lie
over the same morphism. Uniqueness of lifts gives
\(\alpha\varphi=\beta\) and \(e'=e\). The required factorization is
therefore unique, so \(\varphi\) is opcartesian.
\end{proof}

Conversely, an opfibration is discrete precisely when its fibres are
discrete.

\begin{prop}\label{prop:discrete_fibres_discrete_opfibration}
Let \(P\colon\E\to\Ba\) be a Grothendieck opfibration. If every fibre
\(\E_b\) is discrete, then \(P\) is a discrete opfibration.
\end{prop}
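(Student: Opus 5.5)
We must show that for every \(f\colon b\to b'\) and every \(e\) over \(b\) there is exactly one morphism with source \(e\) lying over \(f\). Existence comes from the opfibration hypothesis: there is an opcartesian lift \(\varphi\colon e\to e_2\) with \(P(\varphi)=f\). The real content is uniqueness, and we obtain it from the universal property of \(\varphi\) together with discreteness of the fibre \(\E_{b'}\). Here a discrete fibre means the only morphisms of \(\E_b\), the morphisms of \(\E\) lying over \(1_b\), are identities.

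Let \(\beta\colon e\to e'\) be any morphism with \(P(\beta)=f\). Apply opcartesianness of \(\varphi\) to \(\beta\) and \(\overline\alpha=1_{b'}\colon P(e_2)=b'\to b'=P(e')\). The compatibility condition \(1_{b'}\circ P(\varphi)=P(\beta)\) holds, since both sides equal \(f\). We therefore get a unique \(\alpha\colon e_2\to e'\) with \(\alpha\circ\varphi=\beta\) and \(P(\alpha)=1_{b'}\). Then \(\alpha\) is a morphism in the fibre \(\E_{b'}\), so discreteness forces \(e_2=e'\) and \(\alpha=1_{e_2}\). Hence \(\beta=\varphi\), and every lift of \(f\) with source \(e\) coincides with \(\varphi\), including its target.

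This gives both existence and uniqueness, so \(P\) is a discrete opfibration in the sense of Definition~\ref{def:discrete_opfibration}. The only point needing care is that discreteness is used in the strong sense: the fibre has no non-identity morphisms. Mere non-isomorphism between distinct objects would not suffice, because \(\alpha\) need not a priori be invertible. This is the step I expect to be the only potential obstacle, and it is handled directly by the meaning of a discrete fibre.
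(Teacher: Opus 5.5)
Your proof is correct and follows the paper's argument: any other lift \(\beta\) factors through the opcartesian lift \(\varphi\) via a morphism over \(1_{b'}\), and discreteness of \(\E_{b'}\) forces that morphism to be an identity. Your explicit check of \(1_{b'}\circ P(\varphi)=P(\beta)\) and your note on the strong meaning of discreteness spell out what the paper leaves implicit.
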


\begin{proof}
Let \(\overline f\colon e\to e'\) be an opcartesian lift of
\(f\colon b\to b'\). If \(\gamma\colon e\to e''\) is another lift, the
opcartesian property gives a unique morphism
\(\alpha\colon e'\to e''\) over \(1_{b'}\) such that
\(\alpha\overline f=\gamma\). Since \(\alpha\) belongs to the discrete
fibre \(\E_{b'}\), it is an identity. Hence
\(e'=e''\) and \(\gamma=\overline f\).
\end{proof}

\subsection{The Category of Elements}

Let \(X\colon\Ba\to\Sets\). Its \emph{category of elements}
\(\int^\Ba X\) has objects \((b,x)\), with \(x\in X(b)\), and a morphism
\((b,x)\to(b',x')\) is a morphism \(f\colon b\to b'\) satisfying
\(X(f)(x)=x'\). The projection
\[
\pi_X\colon\int^\Ba X\longrightarrow\Ba
\]
sends \((b,x)\) to \(b\) and every morphism over \(f\) to \(f\).

The Grothendieck construction induces the standard equivalence
\[
[\Ba,\Sets]\simeq\operatorname{DOpFib}(\Ba)
\]
between set-valued functors and discrete opfibrations over \(\Ba\); see
\cite{RiehlLoregianFibration,SpivakDataMigration}. Under this equivalence,
a discrete opfibration \(P\colon\E\to\Ba\) corresponds to the functor
\[
b\longmapsto\operatorname{Obj}(\E_b),
\]
whose action on \(f\colon b\to b'\) sends \(e\in\E_b\) to the target
\(f_!e\) of the unique lift of \(f\) with source \(e\).

\subsection{Functorial Databases}

In the functorial model of databases, a schema is a small category \(\Ba\)
and an instance on that schema is a functor
\(X\colon\Ba\to\Sets\)
\cite{SevenSketches,SpivakDataMigration,SpivakDatabaseQueries}. An object
\(b\in\Ba\) represents an entity type, \(X(b)\) is its set of records, and
a morphism \(f\colon b\to b'\) represents a functional attribute or foreign
key. The function \(X(f)\colon X(b)\to X(b')\) assigns to each record its
corresponding value under that attribute.

\begin{example}\label{example:animals_database}
Consider the schema \(\Ba_{\mathrm{DCH}}\) generated by
\[
\begin{tikzcd}
\mathrm{Dogs} \arrow[rrd, "o_D"] && \\
\mathrm{Cats} \arrow[rr, "o_C"'] && \mathrm{Humans}.
\end{tikzcd}
\]
The morphisms \(o_D\) and \(o_C\) represent ownership. An instance assigns
sets of registered dogs, cats, and humans, together with functions assigning
to every registered animal its unique registered owner.

For example, let
\[
\begin{aligned}
X(\mathrm{Dogs})  &= \{d_1,d_2,d_3\}, \\
X(\mathrm{Cats})  &= \{c_1,c_2\}, \\
X(\mathrm{Humans}) &= \{h_1,h_2,h_3\},
\end{aligned}
\]
with
\[
\begin{aligned}
X(o_D)(d_1) &= X(o_D)(d_2)=h_1, \\
X(o_D)(d_3) &= X(o_C)(c_1)=h_2, \\
X(o_C)(c_2) &= h_3.
\end{aligned}
\]
The corresponding category of elements is represented by
\[
\begin{tikzcd}[column sep=2em, row sep=0.8em]
(\mathrm{Dog},d_1) \arrow[rd] & \\
(\mathrm{Dog},d_2) \arrow[r]  & (\mathrm{Human},h_1) \\
(\mathrm{Dog},d_3) \arrow[r]  & (\mathrm{Human},h_2) \\
(\mathrm{Cat},c_1) \arrow[ru] & \\
(\mathrm{Cat},c_2) \arrow[rd] & \\
                              & (\mathrm{Human},h_3).
\end{tikzcd}
\]
The projection \(\pi_X\) sends every object to its entity type and every
morphism to the corresponding ownership morphism.
\end{example}
\subsection{Sections and Globally Coherent Data}

\begin{prop}\label{prop:sections_database_coherent_families}
Let \(X\colon\Ba\to\Sets\). A section of
\(\pi_X\colon\int^\Ba X\to\Ba\) is equivalent to a family
\((x_b)_{b\in\Ba}\), with \(x_b\in X(b)\), satisfying
\[
X(f)(x_b)=x_{b'}
\]
for every morphism \(f\colon b\to b'\).
\end{prop}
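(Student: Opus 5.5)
I would prove this by writing down the two constructions explicitly and checking that they are mutually inverse. A section of \(\pi_X\) is a functor \(S\colon\Ba\to\int^\Ba X\) with \(\pi_X S=1_\Ba\).

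\textbf{From sections to families.} The condition \(\pi_X S(b)=b\) forces \(S(b)=(b,x_b)\) for a unique \(x_b\in X(b)\). The condition \(\pi_X S(f)=f\) means that, for \(f\colon b\to b'\), the morphism \(S(f)\colon(b,x_b)\to(b',x_{b'})\) is \(f\) itself, regarded as a morphism of \(\int^\Ba X\). By the definition of the category of elements, this is possible only if \(X(f)(x_b)=x_{b'}\). So every section yields a coherent family \((x_b)_{b\in\Ba}\).

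\textbf{From families to sections.} Given a family with \(X(f)(x_b)=x_{b'}\) for all \(f\colon b\to b'\), set \(S(b)=(b,x_b)\). Let \(S(f)\) be the morphism \((b,x_b)\to(b',x_{b'})\) given by \(f\); the coherence condition is exactly what makes this a morphism of \(\int^\Ba X\). Functoriality of \(S\) is immediate, because identities and composition in \(\int^\Ba X\) are computed as in \(\Ba\). By construction \(\pi_X S=1_\Ba\).

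\textbf{The bijection.} It remains to check that the two assignments are mutually inverse. The one point needing care is that a section is determined by its object values. This holds because \(\pi_X\) is faithful: between two given objects \((b,x)\) and \((b',x')\) there is at most one morphism over a given \(f\), namely \(f\) itself. Equivalently, \(\pi_X\) is a discrete opfibration, as in Definition~\ref{def:discrete_opfibration}, so lifts are unique. Hence \(S(f)\) is forced once \(S(b)\) and \(S(b')\) are fixed.

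No step is a genuine obstacle. The only subtle point is this uniqueness of morphisms over \(f\), which ensures that sections carry no data beyond the family \((x_b)_{b\in\Ba}\).
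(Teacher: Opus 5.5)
Your proof is correct and is essentially the paper's argument: write $s(b)=(b,x_b)$, note that a morphism $s(f)$ over $f$ exists only when $X(f)(x_b)=x_{b'}$, and conversely define $s(f)=f$. Your extra check that a section is determined by its values on objects, via faithfulness of $\pi_X$, spells out the uniqueness the paper leaves implicit. Strictly, faithfulness is implied by $\pi_X$ being a discrete opfibration, not equivalent to it.
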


\begin{proof}
If \(s\) is a section, write \(s(b)=(b,x_b)\). For
\(f\colon b\to b'\), the morphism \(s(f)\) exists in the category of
elements precisely when \(X(f)(x_b)=x_{b'}\).

Conversely, a family satisfying these equations defines a section by
\(s(b)=(b,x_b)\) and \(s(f)=f\).
\end{proof}

Thus, a section is a globally coherent choice of one record of every entity
type: the selected records agree with every attribute and foreign-key map.
Equivalently, a section determines an element of the limit of \(X\). When
such a global choice does not exist, one may ask whether coherent choices
exist after restricting the schema to suitable subcategories. This will be the key idea behind Section~\ref{sec:directed_sectional_category}.

\begin{example}\label{example:monoid_action_database}
Let \(M\) be a monoid regarded as a one-object category. A functor
\(X\colon M\to\Sets\) is a left action of \(M\) on \(X(\ast)\).

The category of elements \(\int^M X\) is the corresponding action category:
its objects are the elements \(x\in X(\ast)\), and a morphism labelled by
\(m\in M\) from \(x\) to \(y\) exists precisely when \(m\cdot x=y\).

A section of \(\pi_X\colon\int^M X\to M\) is therefore an element
\(x\in X(\ast)\) satisfying \(m\cdot x=x\) for every \(m\in M\). Thus
global sections are precisely the common fixed points of the action.
\end{example}

\subsection{The comprehensive factorization}
\label{subsec:discrete_factorization}

The comprehensive factorization of Street and Walters associates a
canonical database with every functor \cite{StreetWalters}. It will later
provide a discrete lower approximation to directed sectional category.

Let \(F\colon\CC\to\Dc\). Define its \emph{component database}
\(K_F\colon\Dc\to\Sets\) by
\[
K_F(d)=\pi_0(F\downarrow d),
\]
where \(\pi_0\) denotes connected components with respect to unoriented
zigzags. A morphism \(u\colon d\to d'\) acts by postcomposition:
\[
K_F(u)[c,\alpha]=[c,u\alpha].
\]

Let
\[
q_F\colon\int^\Dc K_F\longrightarrow\Dc
\]
be the associated discrete opfibration. There is a canonical functor
\[
i_F\colon\CC\longrightarrow\int^\Dc K_F,
\qquad
i_F(c)=\bigl(F(c),[(c,1_{F(c)})]\bigr),
\]
and \(F=q_Fi_F\).

\begin{proposition}[Street--Walters]
\label{prop:comprehensive_factorization}
Every functor \(F\colon\CC\to\Dc\) admits a functorial factorization
\[
\CC
\xrightarrow{i_F}
\int^\Dc K_F
\xrightarrow{q_F}
\Dc,
\]
where \(i_F\) is initial and \(q_F\) is a discrete opfibration.
\end{proposition}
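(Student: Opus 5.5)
The plan is to verify the statement directly from the definitions. The one substantive point is to fix what ``initial'' means. I will use the Street--Walters convention: a functor \(i\colon\CC\to\E\) is initial if, for every object \(e\in\E\), the comma category \(i\downarrow e\) is non-empty and connected. The key observation is that for \(i_F\) these comma categories are literally the connected components of the categories \(F\downarrow d\). The argument is organised in four steps.

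\emph{Step 1: \(K_F\) is a well-defined functor.} Postcomposition with \(u\colon d\to d'\) defines a functor \(F\downarrow d\to F\downarrow d'\). A morphism \(g\colon(c,\alpha)\to(c',\alpha')\) in \(F\downarrow d\) satisfies \(\alpha'F(g)=\alpha\), hence also \(u\alpha'F(g)=u\alpha\). So zigzags are carried to zigzags, and the assignment \([c,\alpha]\mapsto[c,u\alpha]\) is independent of the representative. Identities and composites are clearly preserved. It follows that \(q_F\) is the projection of a category of elements, and hence a discrete opfibration by the discussion of Section~\ref{sec:discrete_opfibrations_databases}.

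\emph{Step 2: \(i_F\) is a functor and \(F=q_Fi_F\).} Send \(g\colon c\to c'\) to the morphism \(F(g)\) of \(\int^\Dc K_F\). This requires
\[
K_F(F(g))[c,1_{F(c)}]=[c,F(g)]=[c',1_{F(c')}],
\]
and the last equality holds because \(g\) itself is a morphism \((c,F(g))\to(c',1_{F(c')})\) in \(F\downarrow F(c')\). Functoriality of \(i_F\) and the identity \(q_Fi_F=F\) are then immediate.

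\emph{Step 3: initiality (the main point).} Fix an object \((d,\xi)\) with \(\xi\in\pi_0(F\downarrow d)\).
\begin{itemize}
\item An object of \(i_F\downarrow(d,\xi)\) is a pair \((c,\alpha)\) with \(\alpha\colon F(c)\to d\) a morphism of \(\int^\Dc K_F\) from \(i_F(c)\). This means exactly that \([c,\alpha]=\xi\), so the objects are the objects of \(F\downarrow d\) lying in the component \(\xi\).
\item A morphism \((c,\alpha)\to(c',\alpha')\) is a morphism \(g\colon c\to c'\) with \(\alpha'\circ F(g)=\alpha\). Here we use that morphisms in a category of elements are determined by their image in the base. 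This is precisely a morphism of \(F\downarrow d\).
\end{itemize}
Hence \(i_F\downarrow(d,\xi)\) is isomorphic to the full subcategory of \(F\downarrow d\) spanned by the component \(\xi\). This is non-empty and connected by definition of \(\pi_0\), so \(i_F\) is initial.

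\emph{Step 4: functoriality.} Take a commutative square \(VF=GU\) with \(U\colon\CC\to\CC'\) and \(V\colon\Dc\to\Dc'\). It induces functors \(F\downarrow d\to G\downarrow V(d)\) given by \((c,\alpha)\mapsto(U(c),V(\alpha))\). These pass to \(\pi_0\) and are natural in \(d\). They therefore give a functor \(\int^\Dc K_F\to\int^{\Dc'}K_G\) commuting with the \(i\)'s and the \(q\)'s, and compatibility with composition of squares is routine.

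I expect the only real obstacle to be the bookkeeping in Step 3. One must check that the morphisms of the comma category carry no extra data beyond \(g\), because the morphisms of \(\int^\Dc K_F\) are uniquely determined over \(\Dc\). One must also fix the variance convention for ``initial'' so that it matches the \(F\downarrow d\) used in \(K_F\).
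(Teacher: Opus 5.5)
Your proof is correct and takes the same route as the paper. The paper's argument consists of your Step 3, namely the identification of \(i_F\downarrow(d,\xi)\) with the full subcategory of \(F\downarrow d\) on the component \(\xi\), together with the observation that \(q_F\) is a category-of-elements projection. Your Steps 1, 2 and 4 (well-definedness of \(K_F\), functoriality of \(i_F\) with \(F=q_Fi_F\), and naturality of the factorization with respect to commutative squares) add verifications that the paper leaves implicit.
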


\begin{proof}
The functor \(q_F\) is a discrete opfibration because it is a
category-of-elements projection. For
\((d,\xi)\in\int^\Dc K_F\), the comma category
\(i_F\downarrow(d,\xi)\) is naturally isomorphic to the full subcategory
of \(F\downarrow d\) determined by the component \(\xi\). It is therefore
non-empty and connected, so \(i_F\) is initial.
\end{proof}

For Grothendieck opfibrations, the comprehensive factorization amounts to
taking connected components fibrewise.

\begin{proposition}\label{prop:comprehensive_grothendieck}
Let \(X\colon\CC\to\cat\), and let
\[
P_X\colon\int^\CC X\longrightarrow\CC
\]
be its Grothendieck opfibration. Then
\[
K_{P_X}\cong\pi_0X,
\qquad
(\pi_0X)(c)=\pi_0(X(c)),
\]
and the comprehensive factorization of \(P_X\) is naturally isomorphic to
\[
\int^\CC X
\xrightarrow{\kappa_X}
\int^\CC\pi_0X
\xrightarrow{\pi_{\pi_0X}}
\CC,
\]
where \(\kappa_X(c,x)=(c,[x])\).
\end{proposition}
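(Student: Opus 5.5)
The plan is to compute the component database \(K_{P_X}\) objectwise and then compare the two factorizations through the universal map \(i_{P_X}\).

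\emph{Step 1: compute \(K_{P_X}(c) = \pi_0(P_X\downarrow c)\).} Objects of \(P_X\downarrow c\) are pairs \(((c',x),\varphi\colon c'\to c)\). The fibre inclusion
\[
j_c\colon X(c)\to P_X\downarrow c,\qquad x\mapsto ((c,x),1_c),
\]
is a functor; it sends \(f\colon x\to y\) to \(((1_c,f))\). I will construct a retraction
\[
r_c\colon P_X\downarrow c\to X(c),\qquad ((c',x),\varphi)\mapsto X(\varphi)(x).
\]
On morphisms, a map \(((\psi,f),\ \varphi = \varphi'\psi)\colon ((c',x),\varphi)\to((c'',y),\varphi')\) has \(f\colon X(\psi)(x)\to y\), and is sent to \(X(\varphi')(f)\colon X(\varphi)(x)\to X(\varphi')(y)\). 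Functoriality follows from the composition formula of the Grothendieck construction.

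Then \(r_cj_c=1\), and there is a natural transformation \(1\Rightarrow j_cr_c\) with components
\[
(\varphi,1)\colon ((c',x),\varphi)\to ((c,X(\varphi)x),1_c).
\]
Its naturality is a direct check. Since a natural transformation identifies connected components, \(\pi_0(j_c)\) and \(\pi_0(r_c)\) are mutually inverse bijections \(\pi_0X(c)\cong K_{P_X}(c)\). This is the step with actual content; the rest is bookkeeping.

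\emph{Step 2: naturality in \(c\).} For \(u\colon c\to d\), the action is \(K(u)[(c',x),\varphi]=[(c',x),u\varphi]\). Applying \(r_d\) gives \(X(u\varphi)(x)=X(u)(X(\varphi)x)\), which is \(\pi_0X(u)\) applied to \(r_c\) of the original class. So \(\pi_0(r)\) is natural, giving \(K_{P_X}\cong\pi_0X\). This induces an isomorphism of categories of elements over \(\CC\).

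\emph{Step 3: identify the first factor.} Under this isomorphism, \(i_{P_X}(c,x)=(c,[((c,x),1_c)])\) goes to \((c,[r_c((c,x),1_c)])=(c,[x])\), so it equals \(\kappa_X\) on objects. On morphisms, both functors send \((\varphi,f)\) to \(\varphi\), since morphisms in a category of elements are determined by their image in the base. Hence the two factorizations agree up to the isomorphism.

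Naturality in \(X\) follows because every map above is built canonically. If a cleaner route is wanted, the same conclusion can be obtained from the uniqueness of comprehensive factorizations: \(\kappa_X\) is initial, since its comma categories are the connected components of \(X(c)\) together with the deformation from Step 1. I would use this only as a cross-check.
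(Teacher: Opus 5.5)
Your proof is correct and follows the paper's approach: the components $(\varphi,1)$ of your transformation $1\Rightarrow j_cr_c$ are exactly the opcartesian lifts the paper uses to connect every object of $P_X\downarrow c$ to an object of the fibre $X(c)$. Your retraction $r_c$ also proves a step the paper only asserts, namely that two fibre objects connected in $P_X\downarrow c$ are already connected in $X(c)$, and you verify explicitly that $i_{P_X}$ corresponds to $\kappa_X$, which the paper leaves implicit.
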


\begin{proof}
For every \(d\in\CC\), an object of \(P_X\downarrow d\) has the form
\[
\bigl((c,x),f\colon c\to d\bigr).
\]
The opcartesian lift of \(f\) connects this object to
\[
\bigl((d,X(f)(x)),1_d\bigr).
\]
Thus every component of \(P_X\downarrow d\) contains an object of the
fibre \(X(d)\), and two such objects belong to the same component precisely
when they are connected in \(X(d)\). Hence
\[
\pi_0(P_X\downarrow d)\cong\pi_0(X(d)),
\]
naturally in \(d\), and therefore \(K_{P_X}\cong\pi_0X\).
\end{proof}

\begin{remark}
Thus, the comprehensive factorization of a Grothendieck opfibration
replaces each categorical fibre by its set of connected components. In
particular, if \(P_X\) is a discrete opfibration, then every \(X(d)\) is
discrete, so \(\pi_0X\cong X\) and
\[
\pi_{\pi_0X}\cong P_X
\]
over \(\CC\).
\end{remark}
\section{Covers of Small Categories}
\label{sec:categorical_covers}

The invariants introduced below are defined by decomposing a category into
subcategories on which a prescribed homotopical or sectional condition
holds. We first specify the notion of cover used throughout the article.

\begin{definition}\label{def:categorical_cover}
A \emph{morphism cover} of a category \(\CC\) is a family of subcategories
\(\{\Uc_i\}_{i\in I}\) such that every morphism of \(\CC\) belongs to at
least one \(\Uc_i\).
\end{definition}

Since every identity morphism \(1_c\) belongs to some member of the cover,
each object \(c\in\CC\) also belongs to at least one \(\Uc_i\). Throughout
the remainder of the article, \emph{cover} will always mean a morphism
cover in the sense of Definition~\ref{def:categorical_cover}.

\begin{remark}\label{rem:morphism_vs_geometric_cover}
Morphism covers are weaker than the geometric covers commonly used in
categorical versions of Lusternik--Schnirelmann category, sectional
category, and homotopic distance
\cite{LS-Tanaka,Homotopic_Distance,Baues-Isaac,
carcaciacampos2026weakstrongfibrationsfunctors}.
A geometric cover requires every finite composable sequence of morphisms
to be contained in one member of the cover.

This stronger requirement is natural when the categorical invariant is
intended to model an invariant of the nerve or classifying space, since
the simplices of the nerve are precisely composable sequences of
morphisms. Our purpose is different: we retain the directed categorical
structure itself and use subcategories as local domains on which directed
deformations or coherent selections may be defined. In particular, for a
database \(X\colon\CC\to\Sets\), the members of a cover represent regions
of the schema on which compatible choices of records can be made.

Morphism covers are also preserved by inverse images, a property used
repeatedly below.
\end{remark}

\begin{lemma}\label{lem:inverse_image_cover}
Let \(F\colon\CC\to\Dc\) be a functor. If
\(\{\Uc_i\}_{i\in I}\) is a cover of \(\Dc\), then
\(\{F^{-1}(\Uc_i)\}_{i\in I}\) is a cover of \(\CC\).
\end{lemma}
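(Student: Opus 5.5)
The plan is to first make precise what \(F^{-1}(\Uc_i)\) means and check that it is a subcategory of \(\CC\), and then to verify the covering condition morphism by morphism. For a subcategory \(\Uc\subseteq\Dc\), I take \(F^{-1}(\Uc)\) to consist of the objects \(c\in\CC\) with \(F(c)\in\Uc\) and the morphisms \(g\) of \(\CC\) with \(F(g)\in\Uc\).

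The first step is to show that \(F^{-1}(\Uc)\) is closed under the subcategory operations.
\begin{itemize}
\item \emph{Sources and targets.} If \(g\colon c\to c'\) satisfies \(F(g)\in\Uc\), then the source \(F(c)\) and target \(F(c')\) of \(F(g)\) lie in \(\Uc\), because \(\Uc\) is a subcategory. Hence \(c,c'\in F^{-1}(\Uc)\).
\item \emph{Identities.} If \(c\in F^{-1}(\Uc)\), then \(F(1_c)=1_{F(c)}\in\Uc\). So \(1_c\in F^{-1}(\Uc)\).
\item \emph{Composition.} If \(g,g'\) are composable morphisms of \(F^{-1}(\Uc)\), then \(F(g'g)=F(g')F(g)\in\Uc\), since \(\Uc\) is closed under composition. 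So \(g'g\in F^{-1}(\Uc)\).
\end{itemize}
These three checks use only functoriality of \(F\) together with the closure properties of \(\Uc\).

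The second step is the covering property. Let \(g\) be any morphism of \(\CC\). Since \(\{\Uc_i\}_{i\in I}\) is a morphism cover of \(\Dc\) in the sense of Definition~\ref{def:categorical_cover}, the morphism \(F(g)\) belongs to some \(\Uc_i\). By definition this means \(g\in F^{-1}(\Uc_i)\). Therefore every morphism of \(\CC\) lies in some member of the family, and the family is a cover of \(\CC\).

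None of these steps is a real obstacle. The only point needing care is to define \(F^{-1}(\Uc_i)\) through morphisms, and to observe that the object condition follows from the morphism condition. This is exactly why morphism covers, unlike geometric covers, pull back with no extra argument.
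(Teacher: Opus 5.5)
Your proof is correct and follows the paper's argument: $F(g)$ lies in some $\Uc_i$, so $g\in F^{-1}(\Uc_i)$; you also check explicitly that $F^{-1}(\Uc_i)$ is a subcategory, which the paper leaves implicit. One small caveat concerns your closing aside: geometric covers also pull back by the same argument, since $F$ sends composable sequences to composable sequences, so this does not distinguish morphism covers from geometric ones.
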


\begin{proof}
For every morphism \(f\) of \(\CC\), the morphism \(F(f)\) belongs to some
\(\Uc_i\). Hence \(f\in F^{-1}(\Uc_i)\).
\end{proof}

We use the normalized convention for all sectional invariants: an invariant
equal to \(n\) corresponds to a cover by \(n+1\) subcategories.
\section{Directed Lusternik--Schnirelmann Category}
\label{sec:directed_LS_category}

The Lusternik--Schnirelmann category is a classical invariant measuring the
minimum number of open subsets that are null-homotopic in the ambient
space; see \cite{LS-CAT-OPREA}. A directed Lusternik--Schnirelmann category for directed topological spaces
was recently introduced by Datta, Daundkar, and Sarkar
\cite{DattaDaundkarSarkar}. Categorical analogues for the usual, non-directed, LS-category have been defined
using zigzags of natural transformations
\cite{LS-Tanaka,carcaciacampos2026weakstrongfibrationsfunctors}. The invariant defined below is of a different
nature: it is formulated directly for small categories, using morphism
covers and oriented natural transformations.

\begin{definition}\label{def:directed_LS_category}
The \emph{directed Lusternik--Schnirelmann category} of a small category
\(\CC\), denoted by \(\mathrm{dcat}(\CC)\), is the least integer \(n\geq0\)
for which \(\CC\) admits a cover
\(\{\Uc_0,\ldots,\Uc_n\}\) such that, for every \(i\), there exist an
object \(c_i\in\CC\) and a natural transformation
\[
\alpha^i\colon\overline{c_i}\Rightarrow\iota_i,
\]
where \(\overline{c_i}\colon\Uc_i\to\CC\) is the constant functor with
value \(c_i\) and \(\iota_i\colon\Uc_i\hookrightarrow\CC\) is the
inclusion.

Such a subcategory is called \emph{right categorical}, and a cover by right
categorical subcategories is called a \emph{right categorical cover}. If
no finite right categorical cover exists, we set
\(\mathrm{dcat}(\CC)=\infty\).
\end{definition}

Thus, \(\mathrm{dcat}(\CC)\) measures the minimum number of local pieces on
which a constant functor can be deformed towards the corresponding
inclusion and the value zero recovers directed contractibility.

\begin{proposition}\label{prop:dcat_zero}
Let \(\CC\) be a non-empty small category. Then
\[
\mathrm{dcat}(\CC)=0
\]
if and only if \(\CC\) has a homotopically initial object. If, moreover,
\(\CC\) is acyclic, these conditions are equivalent to the existence of an
initial object.
\end{proposition}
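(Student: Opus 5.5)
The plan is to unwind the definition of \(\mathrm{dcat}(\CC)=0\) and compare it directly with Definition~\ref{def:homotopically_initial_terminal}. The acyclic case will then follow from Proposition~\ref{prop:acyclic_directed_contractibility}.

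First, suppose \(\mathrm{dcat}(\CC)=0\). Then there is a right categorical cover consisting of a single subcategory \(\Uc_0\). Since it is a morphism cover, every morphism of \(\CC\) lies in \(\Uc_0\), and hence every object does too. Therefore \(\Uc_0=\CC\) and the inclusion \(\iota_0\) is \(1_\CC\). The data of right categoricity is an object \(c_0\) together with a natural transformation \(\overline{c_0}\Rightarrow 1_\CC\). By definition, this says exactly that \(c_0\) is homotopically initial.

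Conversely, suppose \(i\) is homotopically initial, witnessed by \(\alpha\colon\bar i\Rightarrow 1_\CC\). Then the one-element family \(\{\CC\}\) is a cover of \(\CC\), and it is right categorical with \(c_0=i\) and \(\alpha^0=\alpha\). Hence \(\mathrm{dcat}(\CC)\le 0\). Since \(\mathrm{dcat}\) takes values in integers \(n\ge 0\), we get \(\mathrm{dcat}(\CC)=0\). The non-emptiness hypothesis only ensures that such an object can exist at all; for the empty category the convention would differ.

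For the acyclic statement, I would observe that "\(\CC\) has a homotopically initial object" is precisely the condition \(\CC\simeq_R\bullet\), as noted just after Definition~\ref{def:homotopically_initial_terminal}. The proof of Proposition~\ref{prop:acyclic_directed_contractibility} shows that if \(\CC\) is acyclic, any homotopically initial \(i\) is initial. The argument is that \(\alpha_i\) is an endomorphism, hence \(1_i\), and naturality forces every \(f\colon i\to c\) to equal \(\alpha_c\). Conversely, an initial object is homotopically initial via its unique morphisms. The only point needing care is that a single-member morphism cover must be all of \(\CC\); this is immediate from the definition, so I expect no real obstacle.
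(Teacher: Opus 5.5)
Your proposal is correct and follows essentially the same route as the paper. You unwind $\mathrm{dcat}(\CC)=0$ as right categoricity of the one-member cover $\{\CC\}$, which is exactly a transformation $\overline{i}\Rightarrow 1_\CC$, and you reduce the acyclic case to Proposition~\ref{prop:acyclic_directed_contractibility}; your explicit check that a single-member morphism cover must be all of $\CC$ makes precise a step the paper leaves implicit. Your aside about the empty category is unnecessary, since under the literal definition both conditions simply fail there.
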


\begin{proof}
The equality \(\mathrm{dcat}(\CC)=0\) means precisely that the cover
\(\{\CC\}\) is right categorical, that is, that there exist an object
\(i\in\CC\) and a natural transformation
\(\overline{i}\Rightarrow1_\CC\). Thus \(i\) is homotopically initial.
The statement for acyclic categories follows from
Proposition~\ref{prop:acyclic_directed_contractibility}.
\end{proof}
\subsection{Domination and Directed Homotopy Equivalence}

\begin{prop}\label{prop:dcat_domination}
Let \(\CC\) and \(\Dc\) be small categories. If
\(\CC\unlhd_R\Dc\), then
\(\mathrm{dcat}(\CC)\leq\mathrm{dcat}(\Dc)\). Consequently, if
\(\CC\simeq_R\Dc\), then
\(\mathrm{dcat}(\CC)=\mathrm{dcat}(\Dc)\).
\end{prop}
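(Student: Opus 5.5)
Given \(F\colon\CC\to\Dc\), \(G\colon\Dc\to\CC\) and a natural transformation \(\theta\colon G F\Rightarrow 1_\CC\), together with a right categorical cover \(\{\Vc_0,\ldots,\Vc_n\}\) of \(\Dc\) realizing \(n=\mathrm{dcat}(\Dc)\), we pull the cover back along \(F\). The plan is as follows. If \(\mathrm{dcat}(\Dc)=\infty\) there is nothing to prove, so we assume it is finite. By Lemma~\ref{lem:inverse_image_cover}, the family \(\Uc_i=F^{-1}(\Vc_i)\), meaning the subcategory of \(\CC\) consisting of morphisms (and objects) sent by \(F\) into \(\Vc_i\), is a morphism cover of \(\CC\) with \(n+1\) members. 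We must show that each \(\Uc_i\) is right categorical.

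Fix \(i\), an object \(d_i\in\Dc\), and \(\beta^i\colon\overline{d_i}\Rightarrow\iota_{\Vc_i}\). Set \(c_i=G(d_i)\). For \(c\in\Uc_i\), define
\[
\alpha^i_c=\theta_c\circ G(\beta^i_{F(c)})\colon G(d_i)\longrightarrow GF(c)\longrightarrow c .
\]
This is legitimate because \(F(c)\in\Vc_i\). Conceptually, \(\alpha^i\) is the vertical composite of the whiskering \(G\beta^i F|_{\Uc_i}\colon \overline{c_i}\Rightarrow G\iota_{\Vc_i}F|_{\Uc_i}=GF\iota_{\Uc_i}\) with \(\theta\iota_{\Uc_i}\colon GF\iota_{\Uc_i}\Rightarrow\iota_{\Uc_i}\). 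Naturality therefore follows from the monotonicity of composition recorded after Proposition~\ref{prop:directed_homotopy_cylinder}.

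The one point needing care is the whiskering step. For a morphism \(g\colon c\to c'\) in \(\Uc_i\), the morphism \(F(g)\) lies in \(\Vc_i\), so naturality of \(\beta^i\) applies to it and gives \(F(g)\circ\beta^i_{F(c)}=\beta^i_{F(c')}\). Applying \(G\) and then naturality of \(\theta\) yields \(g\circ\alpha^i_c=\alpha^i_{c'}\). This is exactly where the inverse-image definition of \(\Uc_i\) is used, and it is the step I would check most carefully; it uses only that \(F\) maps \(\Uc_i\) into \(\Vc_i\). Hence \(\mathrm{dcat}(\CC)\le n\).

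For the consequence, \(\CC\simeq_R\Dc\) means \(\CC\unlhd_R\Dc\) and \(\Dc\unlhd_R\CC\). Applying the inequality in both directions gives equality, with the convention \(\infty\le\infty\).
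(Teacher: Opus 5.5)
Your proof is correct and follows the paper's route: you pull back a minimal right categorical cover of \(\Dc\) along \(F\), then compose the whiskered transformation \(G\beta^iF\) with \(\theta\) to obtain \(\overline{G(d_i)}\Rightarrow GF\iota\Rightarrow\iota\) on each \(F^{-1}(\Vc_i)\). Your explicit naturality check simply spells out what the paper compresses into the chain \(\overline{G(d_i)}\leq_d GF\iota_i'\leq_d\iota_i'\).
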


\begin{proof}
Choose \(F\colon\CC\to\Dc\), \(G\colon\Dc\to\CC\), and
\(\gamma\colon GF\Rightarrow1_\CC\).

The assertion is immediate if \(\mathrm{dcat}(\Dc)=\infty\). Otherwise,
let \(\{\Uc_0,\ldots,\Uc_n\}\) be a minimal right categorical cover, with
\(\overline{d_i}\leq_d\iota_i\). Set
\(\Vc_i=F^{-1}(\Uc_i)\), and denote by
\(\iota_i'\colon\Vc_i\hookrightarrow\CC\) the inclusion. Applying \(G\),
restricting along \(F|_{\Vc_i}\), and then using \(\gamma\), gives
\[
\overline{G(d_i)}
\leq_d
GF\iota_i'
\leq_d
\iota_i'.
\]
Thus every \(\Vc_i\) is right categorical. Since the \(\Vc_i\) cover
\(\CC\), we obtain
\[
\mathrm{dcat}(\CC)\leq\mathrm{dcat}(\Dc).
\]
The equality for right directed homotopy equivalent categories follows by
applying the inequality in both directions.
\end{proof}

\subsection{Bounds for Finite Acyclic Categories}

\begin{definition}
Let \(\CC\) be an acyclic category. An object \(s\in\CC\) is a
\emph{source} if \(\Hom_\CC(x,s)=\varnothing\) for every \(x\neq s\).
\end{definition}

Thus, a source receives no non-identity morphisms.

\begin{remark}
Every finite non-empty acyclic category has a source. Indeed, starting at
any object and repeatedly following a non-identity incoming morphism must
terminate: acyclicity prevents repetition and finiteness prevents an
infinite sequence.
\end{remark}

\begin{prop}\label{prop:dcat_sources_lower_bound}
Let \(\CC\) be a finite non-empty acyclic category. Then
\[
\mathrm{dcat}(\CC)
\geq
|\operatorname{Src}(\CC)|-1.
\]
\end{prop}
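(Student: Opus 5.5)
We argue that in any right categorical cover \(\{\Uc_0,\ldots,\Uc_n\}\) of \(\CC\), each source \(s\in\operatorname{Src}(\CC)\) forces a distinct member of the cover. This yields \(n+1\geq|\operatorname{Src}(\CC)|\), which is the claim. If \(\mathrm{dcat}(\CC)=\infty\) there is nothing to prove, so fix a finite right categorical cover with objects \(c_i\) and transformations \(\alpha^i\colon\overline{c_i}\Rightarrow\iota_i\).

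\emph{Key step.} Suppose a member \(\Uc_i\) contains a source \(s\) as an object. The component \(\alpha^i_s\colon c_i\to s\) is a morphism of \(\CC\) into a source, so it must be an identity: a source receives no morphisms from other objects, and by acyclicity its only endomorphism is \(1_s\). Hence \(c_i=s\). In particular, a given \(\Uc_i\) can contain at most one source as an object, because all of them would have to equal \(c_i\).

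\emph{Counting.} Each object \(c\) belongs to some member of the cover, since \(1_c\) does (see the remark after Definition~\ref{def:categorical_cover}). Choose, for every source \(s\), an index \(i(s)\) with \(s\in\Uc_{i(s)}\). By the key step we have \(c_{i(s)}=s\), so the map \(s\mapsto i(s)\) is injective. Therefore \(n+1\geq|\operatorname{Src}(\CC)|\). Taking a minimal cover gives \(\mathrm{dcat}(\CC)\geq|\operatorname{Src}(\CC)|-1\). Finiteness and non-emptiness are only needed to ensure, via the preceding remark, that \(\operatorname{Src}(\CC)\) is nonempty, so the bound is meaningful.

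The only potential obstacle is the key step, namely checking that acyclicity really rules out a non-identity endomorphism \(c_i\to s\) when \(c_i=s\). This is immediate from the definition of acyclic. Naturality of \(\alpha^i\) is not even needed; only the existence of its components is used.
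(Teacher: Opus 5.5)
Your proof is correct and follows essentially the same route as the paper's: each source lies in some member \(\Uc_i\), the component \(\alpha^i_s\colon c_i\to s\) forces \(c_i=s\), so the sources inject into \(\{c_0,\ldots,c_n\}\). The only difference is that you spell out the injectivity of \(s\mapsto i(s)\) and the \(\infty\) case explicitly. The appeal to acyclicity to make \(\alpha^i_s\) an identity is not needed, since \(\Hom_\CC(x,s)=\varnothing\) for \(x\neq s\) already gives \(c_i=s\).
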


\begin{proof}
Suppose that \(\{\Uc_0,\ldots,\Uc_n\}\) is a right categorical cover, with
transformations
\(\alpha^i\colon\overline{c_i}\Rightarrow\iota_i\).

If \(s\) is a source, then \(s\in\Uc_i\) for some \(i\), since \(1_s\)
belongs to the cover. The component \(\alpha^i_s\colon c_i\to s\) forces
\(c_i=s\). Thus every source occurs among \(c_0,\ldots,c_n\), so
\(|\operatorname{Src}(\CC)|\leq n+1\). Taking a minimal cover gives the
result.
\end{proof}

For finite posets, this lower bound is attained.

\begin{prop}\label{prop:dcat_finite_poset}
Let \(P\) be a finite non-empty poset, regarded as a category, and let
\(r\) be the number of its minimal elements. Then
\[
\mathrm{dcat}(P)=r-1.
\]
\end{prop}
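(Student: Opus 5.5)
The argument splits into a lower bound, obtained directly from Proposition~\ref{prop:dcat_sources_lower_bound}, and an explicit cover for the upper bound.

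\emph{Lower bound.} A finite poset regarded as a category is acyclic. Its sources are exactly its minimal elements, so \(|\operatorname{Src}(P)|=r\). Proposition~\ref{prop:dcat_sources_lower_bound} therefore gives \(\mathrm{dcat}(P)\geq r-1\).

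\emph{Upper bound.} Let \(m_1,\ldots,m_r\) be the minimal elements of \(P\). I would take the principal up-sets
\[
\Uc_j=\{x\in P : m_j\leq x\},
\]
regarded as full subcategories. Each \(\Uc_j\) is right categorical. Indeed, for every \(x\in\Uc_j\) the unique morphism \(m_j\to x\) of \(P\) is available as a component. Naturality is automatic in a poset, because all parallel morphisms coincide. This gives a natural transformation \(\overline{m_j}\Rightarrow\iota_j\).

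\emph{Covering.} It remains to check that the \(\Uc_j\) form a morphism cover. Take a morphism \(x\leq y\). Since \(P\) is finite, the down-set of \(x\) contains some minimal element \(m_j\leq x\). Then \(m_j\leq x\leq y\), so both \(x\) and \(y\) lie in \(\Uc_j\). Because \(\Uc_j\) is full, the morphism \(x\to y\) lies in \(\Uc_j\). This gives a right categorical cover by \(r\) subcategories, so \(\mathrm{dcat}(P)\leq r-1\).

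The main point needing care is this covering step. The members must be chosen upward closed, so that the source of a morphism determines a member containing the whole morphism. With that choice, a morphism cover is obtained rather than merely an object cover. The other checks are immediate because \(P\) is a thin category.
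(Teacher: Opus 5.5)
Your proof is correct and matches the paper's argument. The lower bound comes from Proposition~\ref{prop:dcat_sources_lower_bound}, using that the minimal elements are the sources. The upper bound comes from the full subcategories on the principal upper sets of the minimal elements: upward closure makes them a morphism cover, and the unique morphisms from each minimal element give the natural transformations.
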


\begin{proof}
The minimal elements of \(P\) are its sources, so
Proposition~\ref{prop:dcat_sources_lower_bound} gives
\(\mathrm{dcat}(P)\geq r-1\).

Let \(s_1,\ldots,s_r\) be the minimal elements, and let \(\Uc_j\) be the
full subcategory on the principal upper set
\[
\uparrow s_j=\{x\in P\mid s_j\leq x\}.
\]
Every element of \(P\) lies above a minimal element. Moreover, if
\(x\leq y\) and \(s_j\leq x\), then \(s_j\leq y\). Hence the
\(\Uc_j\) cover all morphisms of \(P\).

For every \(x\in\Uc_j\), the unique morphism \(s_j\to x\) defines the
component of a natural transformation
\(\overline{s_j}\Rightarrow\iota_j\). Thus the \(\Uc_j\) form a right
categorical cover and \(\mathrm{dcat}(P)\leq r-1\).
\end{proof}

We next give an upper bound in terms of maximal chains. An \(n\)-chain in
\(\CC\) is a sequence
\[
c_0\xrightarrow{f_1}c_1\xrightarrow{f_2}\cdots
\xrightarrow{f_n}c_n.
\]
Equivalently, it is an \(n\)-simplex of the nerve \(N\CC\)
\cite{nlab:nerve}. It is \emph{non-degenerate} if none of the \(f_i\) is
an identity; objects are regarded as non-degenerate \(0\)-chains. A
non-degenerate chain is \emph{maximal} if it is not a proper face of
another non-degenerate chain. We denote the set of maximal chains by
\(\operatorname{MaxChains}(\CC)\).

\begin{prop}\label{prop:bounds_maxchains}
Let \(\CC\) be a finite non-empty acyclic category. Then
\[
\mathrm{dcat}(\CC)
\leq
|\operatorname{MaxChains}(\CC)|-1.
\]
\end{prop}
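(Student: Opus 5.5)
The plan is to assign one subcategory to each maximal chain and to check that each of these subcategories is right categorical. Let \(\sigma\) be a maximal non-degenerate chain
\[
c_0\xrightarrow{f_1}c_1\xrightarrow{f_2}\cdots\xrightarrow{f_n}c_n .
\]
Define \(\Uc_\sigma\) as the subcategory whose objects are \(c_0,\ldots,c_n\) and whose morphisms are the identities together with all composites \(f_j\circ\cdots\circ f_{i+1}\colon c_i\to c_j\) for \(i<j\). This collection is closed under composition, so \(\Uc_\sigma\) is a subcategory. By acyclicity the \(c_i\) are pairwise distinct; otherwise we would have a nonidentity endomorphism or morphisms in both directions. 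Hence \(\Uc_\sigma\) is a finite linear order generated by the chain.

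The first thing to verify is that \(\Uc_\sigma\) is right categorical with base object \(c_0\). Set \(\alpha_{c_i}=f_i\circ\cdots\circ f_1\colon c_0\to c_i\), with \(\alpha_{c_0}=1_{c_0}\). The only morphisms of \(\Uc_\sigma\) are the composites along the chain. So for a morphism \(g=f_j\circ\cdots\circ f_{i+1}\) of \(\Uc_\sigma\), naturality amounts to the identity \(g\circ\alpha_{c_i}=\alpha_{c_j}\), which holds by associativity. Thus \(\alpha\colon\overline{c_0}\Rightarrow\iota_\sigma\) is a natural transformation.

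The second step, and the main one, is to show that the \(\Uc_\sigma\) cover every morphism of \(\CC\). An identity \(1_c\) is handled by extending the \(0\)-chain \(c\) to a maximal chain. Let \(f\colon c\to d\) be a nonidentity morphism. Then \(c\xrightarrow{f}d\) is a non-degenerate \(1\)-chain. Repeatedly extend it, either by inserting factorizations of some arrow into two nonidentity arrows or by adding arrows at the ends, until it becomes maximal.

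Two points need checking. First, the process terminates: in a finite acyclic category, non-degenerate chains have bounded length, since their objects are distinct and at most \(|\mathrm{Ob}\,\CC|\) in number. Second, the morphism \(f\) is preserved: both insertion and end-extension keep \(f\) as a composite of a consecutive segment of arrows. We must also confirm that "proper face" covers exactly these two operations, namely deleting an end object or deleting an interior object together with composing its two adjacent arrows. That is the definition of faces in the nerve, so a chain that cannot be extended in either way is maximal. Consequently \(f\) lies in \(\Uc_\sigma\) for the resulting maximal chain \(\sigma\).

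Finiteness of \(\CC\) makes \(\operatorname{MaxChains}(\CC)\) finite. So the family \(\{\Uc_\sigma\}\) is a right categorical cover with \(|\operatorname{MaxChains}(\CC)|\) members, which gives \(\mathrm{dcat}(\CC)\le|\operatorname{MaxChains}(\CC)|-1\).

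The main obstacle I expect is the extension argument in the covering step. There I must make sure that inserting a factorization still yields a non-degenerate chain, which holds because the factors are chosen nonidentity, and that \(f\) remains a composite of consecutive arrows.
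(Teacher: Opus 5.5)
Your proposal is correct and follows essentially the same route as the paper. Both take the subcategory generated by each maximal chain, use the natural transformation \(\overline{c_0}\Rightarrow\iota\) with components \(f_i\circ\cdots\circ f_1\), and obtain the cover by extending each object and each non-identity morphism to a maximal chain. Your version also supplies details the paper leaves implicit: the distinctness of the \(c_i\), which makes the components well defined, and the termination of the extension process.
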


\begin{proof}
Every non-degenerate chain lies in a maximal one. For a maximal chain
\[
\bar f=
\bigl(
c_0\xrightarrow{f_1}c_1\xrightarrow{f_2}\cdots
\xrightarrow{f_n}c_n
\bigr),
\]
let \(\Uc_{\bar f}\) be the subcategory generated by its morphisms.

For every \(c_i\in\Uc_{\bar f}\), the composite
\(f_i\circ\cdots\circ f_1\colon c_0\to c_i\) defines the component of a
natural transformation
\(\overline{c_0}\Rightarrow\iota_{\bar f}\). Thus
\(\Uc_{\bar f}\) is right categorical.

Every non-identity morphism is a non-degenerate \(1\)-chain and hence lies
in a maximal chain. Every object, regarded as a \(0\)-chain, also lies in a
maximal chain. Therefore the subcategories \(\Uc_{\bar f}\) form a right
categorical cover, and
\[
\mathrm{dcat}(\CC)+1
\leq
|\operatorname{MaxChains}(\CC)|.
\]
\end{proof}

For an acyclic category \(\CC\), its subdivision
\(\operatorname{sub}(\CC)\) is the poset of non-degenerate chains ordered
by the face relation \cite{Subdivision}. In the undirected setting,
subdivision does not increase categorical LS-category:
\(\ccat(\operatorname{sub}(\CC))\leq\ccat(\CC)\)
\cite[Corollary~3.4 and Proposition~2.19]{LS-Tanaka}. The directed
invariant satisfies the following inequality in the opposite direction.

\begin{corollary}\label{cor:dcat_subdivision}
Let \(\CC\) be a finite non-empty acyclic category. Then
\[
\mathrm{dcat}(\CC)
\leq
\mathrm{dcat}\bigl(\operatorname{sub}(\CC)^{\op}\bigr).
\]
\end{corollary}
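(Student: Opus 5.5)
The plan is to compute the right-hand side exactly using Proposition~\ref{prop:dcat_finite_poset}, and then compare it with the upper bound of Proposition~\ref{prop:bounds_maxchains}. Both invariants should reduce to the number of maximal chains of \(\CC\).

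First I will check that \(\operatorname{sub}(\CC)^{\op}\) is a finite non-empty poset, so that Proposition~\ref{prop:dcat_finite_poset} applies. Non-emptiness is clear, since every object of \(\CC\) is a non-degenerate \(0\)-chain. For finiteness, acyclicity of \(\CC\) implies that the objects of a non-degenerate chain \(c_0\to c_1\to\cdots\to c_n\) are pairwise distinct. Indeed, a repetition \(c_i=c_j\) with \(i<j\) would give a morphism \(c_{i+1}\to c_i\) together with \(f_{i+1}\colon c_i\to c_{i+1}\); when \(c_{i+1}\ne c_i\) this is forbidden, and when \(j=i+1\) the morphism \(f_{i+1}\) would be a non-identity endomorphism. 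Hence \(n\) is bounded by the number of objects. Since \(\CC\) has finitely many morphisms, there are only finitely many non-degenerate chains. The face relation is a partial order, so \(\operatorname{sub}(\CC)\), and hence its opposite, is a finite poset.

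Next I will identify the minimal elements of \(\operatorname{sub}(\CC)^{\op}\). They are the maximal elements of \(\operatorname{sub}(\CC)\) for the face order, namely the non-degenerate chains that are not proper faces of another non-degenerate chain. By definition these are exactly the elements of \(\operatorname{MaxChains}(\CC)\). Proposition~\ref{prop:dcat_finite_poset} then gives
\[
\mathrm{dcat}\bigl(\operatorname{sub}(\CC)^{\op}\bigr)=|\operatorname{MaxChains}(\CC)|-1 .
\]

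Finally, Proposition~\ref{prop:bounds_maxchains} gives \(\mathrm{dcat}(\CC)\le|\operatorname{MaxChains}(\CC)|-1\), which is the claimed inequality. The only point that needs care is the identification of maximality in the face poset with the paper's notion of maximal chain. This is essentially definitional, provided the face relation is taken to include non-contiguous faces, obtained by composing consecutive morphisms or deleting endpoints. That is the standard convention for subdivision, and it is the same relation used in defining \(\operatorname{MaxChains}(\CC)\), so I expect no real obstacle.
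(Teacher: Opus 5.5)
Your proposal is correct and follows the paper's proof: identify the sources of \(\operatorname{sub}(\CC)^{\op}\) with the elements of \(\operatorname{MaxChains}(\CC)\), compute the right-hand side with Proposition~\ref{prop:dcat_finite_poset}, and conclude with Proposition~\ref{prop:bounds_maxchains}. Your check that \(\operatorname{sub}(\CC)^{\op}\) is a finite poset is detail the paper leaves implicit, but the second case in your distinctness argument should be \(c_{i+1}=c_i\) rather than \(j=i+1\), since the case \(c_{i+1}=c_i\) with \(j>i+1\) is otherwise missed; in that case \(f_{i+1}\) is a non-identity endomorphism, which acyclicity forbids.
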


\begin{proof}
The sources of \(\operatorname{sub}(\CC)^{\op}\) are the maximal chains of
\(\CC\). By Proposition~\ref{prop:dcat_finite_poset},
\[
\mathrm{dcat}\bigl(\operatorname{sub}(\CC)^{\op}\bigr)
=
|\operatorname{MaxChains}(\CC)|-1.
\]
Now apply Proposition~\ref{prop:bounds_maxchains}.
\end{proof}

\subsection{Comparison with Undirected LS-Category}

To isolate the effect of orienting the homotopies while retaining the same
covers, we introduce the following undirected invariant.

\begin{definition}\label{def:strong_ccat}
The \emph{strong normalized categorical Lusternik--Schnirelmann category}
of a small category \(\CC\), denoted by \(\ccat(\CC)\), is the least
integer \(n\geq0\) for which \(\CC\) admits a cover
\(\{\Uc_0,\ldots,\Uc_n\}\) such that every inclusion
\(\iota_i\colon\Uc_i\hookrightarrow\CC\) is strongly homotopic to a
constant functor. If no finite cover exists, we set
\(\ccat(\CC)=\infty\).
\end{definition}

\begin{remark}
Although Definition~\ref{def:strong_ccat} uses the usual undirected
homotopy relation, its covers are the morphism covers of
Definition~\ref{def:categorical_cover}. Thus, \(\ccat\) differs from
Tanaka's categorical LS-category, which uses geometric covers
\cite{LS-Tanaka}.
\end{remark}

\begin{prop}\label{prop:ccat_leq_dcat}
For every small category \(\CC\),
\(\ccat(\CC)\leq\mathrm{dcat}(\CC)\).
\end{prop}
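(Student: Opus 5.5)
The argument compares the two invariants cover by cover. If \(\mathrm{dcat}(\CC)=\infty\) there is nothing to prove. Otherwise we fix a minimal right categorical cover \(\{\Uc_0,\ldots,\Uc_n\}\) with \(n=\mathrm{dcat}(\CC)\), together with objects \(c_i\in\CC\) and natural transformations
\[
\alpha^i\colon\overline{c_i}\Rightarrow\iota_i .
\]
It suffices to show that this same family is admissible in Definition~\ref{def:strong_ccat}, that is, that each inclusion \(\iota_i\colon\Uc_i\hookrightarrow\CC\) is strongly homotopic to a constant functor. Then \(\ccat(\CC)\leq n\).

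The key step is that a single natural transformation is a special case of the undirected homotopy relation. In the sense of Lee \cite{LEEHomotopy}, strong homotopy is the equivalence relation on functors \(\Uc_i\to\CC\) generated by natural transformations in either direction, i.e.\ finite zigzags. By Proposition~\ref{prop:directed_homotopy_cylinder}, \(\alpha^i\) corresponds to a functor \(H\colon\Uc_i\times\mathbb{I}_1\to\CC\) with \(H(-,0)=\overline{c_i}\) and \(H(-,1)=\iota_i\). This is exactly a one-step zigzag, so it is a strong homotopy \(\overline{c_i}\simeq\iota_i\). Since the cover is the same morphism cover, the definitions of \(\ccat\) and \(\mathrm{dcat}\) differ only in the homotopy relation demanded, and the directed one implies the undirected one.

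The only potential obstacle is terminological: one must confirm that \enquote{strongly homotopic} in Definition~\ref{def:strong_ccat} means the relation generated by natural transformations, possibly through longer zigzags or cylinders \(\Uc_i\times I_m\), rather than some relation requiring extra data. If the paper's notion is defined via zigzag cylinders \(\Uc_i\times I_m\), the case \(m=1\) with \(I_1=\mathbb{I}_1\) is exactly the functor \(H\) above. Either way the transformation \(\alpha^i\) witnesses the required homotopy, which completes the argument.
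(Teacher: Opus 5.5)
Your proposal is correct and is essentially the paper's own argument: a natural transformation \(\overline{c_i}\Rightarrow\iota_i\) is a zigzag of length one, so every right categorical cover is already admissible in the definition of \(\ccat\), which gives \(\ccat(\CC)\leq\mathrm{dcat}(\CC)\). The detour through the cylinder description of Proposition~\ref{prop:directed_homotopy_cylinder} is harmless but not needed.
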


\begin{proof}
A natural transformation
\(\overline{c_i}\Rightarrow\iota_i\) is a zigzag of length one. Hence every
right categorical cover is also strongly categorical.
\end{proof}

\subsection{The Case of Monoids}
\label{subsec:dcat_monoids}

Let \(M\) be a monoid regarded as a one-object category. Its subcategories
are precisely its submonoids, and a cover of \(M\) is therefore a family of
submonoids whose union is \(M\).

\begin{prop}\label{prop:categorical_submonoids}
Let \(N\subseteq M\) be a submonoid. Then \(N\) is right categorical in
\(M\) if and only if there exists \(s\in M\) such that \(ns=s\) for every
\(n\in N\).
\end{prop}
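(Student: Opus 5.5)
The plan is to unwind Definition~\ref{def:directed_LS_category} for a one-object category; the whole argument then comes down to comparing the naturality condition with the defining equation for \(s\).

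\textbf{Setup.} Regard \(M\) as a category with single object \(\ast\) and \(N\) as a subcategory with the same object. The constant functor \(\overline{c}\colon N\to M\) has only one possible value, \(c=\ast\). It sends every \(n\in N\) to the identity \(1\). The inclusion \(\iota\colon N\hookrightarrow M\) sends \(n\) to \(n\).

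\textbf{Natural transformations.} A natural transformation \(\alpha\colon\overline{\ast}\Rightarrow\iota\) is determined by its single component \(\alpha_\ast=s\in M\). Naturality with respect to a morphism \(n\colon\ast\to\ast\) of \(N\) reads
\[
\iota(n)\circ\alpha_\ast=\alpha_\ast\circ\overline{\ast}(n),
\]
that is, \(n\circ s=s\circ 1=s\). In monoid notation this is \(ns=s\).

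\textbf{Conclusion.} The set of natural transformations \(\overline{\ast}\Rightarrow\iota\) is therefore in bijection with the set of \(s\in M\) satisfying \(ns=s\) for all \(n\in N\). Consequently \(N\) is right categorical precisely when this set is non-empty, and both implications follow at once.

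This is the restricted version of the earlier remark on right absorbing elements, with \(N\) in place of \(M\). The only point needing care, and it is not a real obstacle, is the orientation of composition. One must keep to the paper's convention that \(m\circ s\) is the monoid product \(ms\), so that the condition is left multiplication by elements of \(N\) fixing \(s\). Note also that \(s\) ranges over all of \(M\) and need not lie in \(N\).
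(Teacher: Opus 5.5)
Your proposal is correct and matches the paper's proof: both identify a natural transformation \(\overline{\ast}\Rightarrow\iota_N\) with its single component \(s\in M\), and observe that naturality with respect to each \(n\in N\) is exactly the equation \(ns=s\). Your remarks on the orientation of composition and on \(s\) not needing to lie in \(N\) also agree with the paper, which makes the latter point in the remark that follows the proposition.
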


\begin{proof}
There is a unique constant functor
\(\overline{\ast}\colon N\to M\). A natural transformation
\(\alpha\colon\overline{\ast}\Rightarrow\iota_N\) is determined by its
component \(s=\alpha_\ast\in M\). Naturality with respect to \(n\in N\)
is the commutativity of
\[
\begin{tikzcd}
\ast \arrow[r, "s"] \arrow[d, "1"'] &
\ast \arrow[d, "n"] \\
\ast \arrow[r, "s"'] &
\ast ,
\end{tikzcd}
\]
which is equivalent to \(ns=s\).
\end{proof}

\begin{remark}
The element \(s\) need not belong to \(N\). It is an element of the ambient
monoid fixed under left multiplication by \(N\). If \(s\in N\), then \(s\)
is a right absorbing element of \(N\).
\end{remark}

\begin{prop}\label{prop:dcat_monoid_cover}
Let \(M\) be a monoid. Then \(\mathrm{dcat}(M)\leq r-1\) if and only if
there exist submonoids \(N_1,\ldots,N_r\subseteq M\) and elements
\(s_1,\ldots,s_r\in M\) such that
\[
M=N_1\cup\cdots\cup N_r
\qquad\text{and}\qquad
ns_i=s_i
\quad
\text{for every }n\in N_i.
\]
\end{prop}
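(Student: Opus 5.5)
This is essentially an unwinding of Definition~\ref{def:directed_LS_category} in the one-object case, using Proposition~\ref{prop:categorical_submonoids} to translate the right categorical condition. The plan is to prove the two implications separately. The only preliminary is the observation already recorded at the start of Subsection~\ref{subsec:dcat_monoids}: subcategories of \(M\) are exactly submonoids, and a family of submonoids is a morphism cover precisely when its union is \(M\). The single object is automatically contained in every submonoid, so covering objects imposes nothing extra.

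For the forward direction, assume \(\mathrm{dcat}(M)=n\leq r-1\) and take a right categorical cover \(\{\Uc_0,\ldots,\Uc_n\}\). Each \(\Uc_i\) is a submonoid \(N_{i+1}\), and Proposition~\ref{prop:categorical_submonoids} provides \(s_{i+1}\in M\) with \(ns_{i+1}=s_{i+1}\) for all \(n\in N_{i+1}\). If \(n+1<r\), pad the list to length \(r\) by repeating one of the pairs, for instance setting \(N_j=N_1\) and \(s_j=s_1\) for \(j>n+1\). The union is unchanged and the fixed-point condition still holds. Since the standing convention requires \(\mathrm{dcat}(M)\) to be finite here, \(n\) is finite, and because \(M\) is non-empty the cover has at least one member, so padding is always possible.

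For the converse, given \(N_1,\ldots,N_r\) and \(s_1,\ldots,s_r\) with the stated properties, the family \(\{N_1,\ldots,N_r\}\) is a cover of \(M\). By Proposition~\ref{prop:categorical_submonoids}, each \(N_i\) is right categorical, the natural transformation \(\overline{\ast}\Rightarrow\iota_{N_i}\) having component \(s_i\). Hence \(M\) admits a right categorical cover by \(r\) subcategories, and so \(\mathrm{dcat}(M)\leq r-1\) by minimality.

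There is no substantive obstacle. The only points needing care are the padding step, which handles the case where a cover of fewer than \(r\) pieces exists (a definition with ``least \(n\)'' makes a smaller cover imply the bound, but the statement asks for exactly \(r\) pieces), and noting that repeated members are allowed in a cover, so the indexing by \(0,\ldots,n\) does not force distinctness.
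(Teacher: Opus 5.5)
Your proposal is correct and follows the paper's argument: in both directions, Proposition~\ref{prop:categorical_submonoids} is applied to each member of a cover of \(M\) by submonoids. Your padding step, which repeats a pair when a cover with fewer than \(r\) members exists, makes explicit a detail that the paper's one-line proof leaves implicit.
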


\begin{proof}
This follows directly from
Proposition~\ref{prop:categorical_submonoids}: the \(N_i\) form a right
categorical cover precisely under the stated conditions.
\end{proof}

\begin{corollary}\label{cor:dcat_zero_monoid}
A monoid \(M\) satisfies \(\mathrm{dcat}(M)=0\) if and only if it has a
right absorbing element.
\end{corollary}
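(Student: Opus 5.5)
The plan is to specialize Proposition~\ref{prop:dcat_monoid_cover} to \(r=1\). By definition, \(\mathrm{dcat}(M)=0\) means \(\mathrm{dcat}(M)\leq 0\), since the invariant is a non-negative integer (or \(\infty\)). Proposition~\ref{prop:dcat_monoid_cover} with \(r=1\) says this happens if and only if there are a single submonoid \(N_1\) with \(M=N_1\) and an element \(s_1\in M\) such that \(ns_1=s_1\) for every \(n\in N_1=M\).

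The condition \(ms=s\) for all \(m\in M\) is exactly the definition of a right absorbing element, following the convention fixed in the remark after Proposition~\ref{prop:acyclic_directed_contractibility}. The cover \(\{M\}\) is forced, because \(N_1\) must equal \(M\). Hence both directions are immediate.

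As a cross-check, one can instead run the argument through Proposition~\ref{prop:dcat_zero}: \(\mathrm{dcat}(M)=0\) if and only if the unique object is homotopically initial. By Proposition~\ref{prop:categorical_submonoids} applied to \(N=M\), this is equivalent to having \(s\) with \(ms=s\) for all \(m\). The one-object category is non-empty, so the hypothesis of Proposition~\ref{prop:dcat_zero} holds.

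There is no genuine obstacle here. The only point needing care is the orientation: one should check that the naturality square in Proposition~\ref{prop:categorical_submonoids} yields \(ns=s\) (left multiplication fixing \(s\)) rather than \(sn=s\), so that the element is right absorbing and not left absorbing.
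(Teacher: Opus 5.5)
Your proposal is correct and matches the paper's proof, which simply applies Proposition~\ref{prop:dcat_monoid_cover} with \(r=1\). There the single submonoid is forced to be \(M\), and the condition \(ms=s\) for all \(m\in M\) is exactly the right absorbing condition; your cross-check via Proposition~\ref{prop:dcat_zero} and Proposition~\ref{prop:categorical_submonoids} is also valid but not needed.
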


\begin{proof}
Apply Proposition~\ref{prop:dcat_monoid_cover} with \(r=1\).
\end{proof}

\begin{prop}\label{prop:dcat_group}
Let \(G\) be a group regarded as a one-object category. Then
\[
\mathrm{dcat}(G)
=
\begin{cases}
0, & \text{if }G=\{1\},\\
\infty, & \text{if }G\neq\{1\}.
\end{cases}
\]
\end{prop}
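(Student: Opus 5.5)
The plan is to use Proposition~\ref{prop:dcat_monoid_cover}, which reduces everything to submonoids and fixed elements. In a group, \(ns=s\) forces \(n=1\), because we may multiply on the right by \(s^{-1}\). So, by Proposition~\ref{prop:categorical_submonoids}, a submonoid \(N\subseteq G\) is right categorical in \(G\) if and only if \(N=\{1\}\).

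For the trivial group, \(G=\{1\}\) is itself a right categorical submonoid, with \(s=1\). Hence \(\{G\}\) is a right categorical cover and \(\mathrm{dcat}(G)=0\). Alternatively, one can invoke Corollary~\ref{cor:dcat_zero_monoid}, since \(1\) is right absorbing in the trivial group.

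For \(G\neq\{1\}\), suppose that \(\mathrm{dcat}(G)\leq r-1\) for some finite \(r\). Proposition~\ref{prop:dcat_monoid_cover} then gives submonoids \(N_1,\ldots,N_r\) with \(G=N_1\cup\cdots\cup N_r\) and elements \(s_i\) satisfying \(ns_i=s_i\) for all \(n\in N_i\). By the observation above, every \(N_i=\{1\}\). The union is then \(\{1\}\neq G\), which is a contradiction. So no finite right categorical cover exists, and \(\mathrm{dcat}(G)=\infty\) by the convention in Definition~\ref{def:directed_LS_category}.

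I do not expect any real obstacle here. The only point to state carefully is that cancellation uses the existence of \(s^{-1}\). This holds even though \(s\) need not lie in \(N\), because \(s\) is an element of the ambient group \(G\).
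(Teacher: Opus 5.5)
Your proposal is correct and takes essentially the same route as the paper. In both, cancellation by \(s^{-1}\) in \(G\) forces every right categorical submonoid to be \(\{1\}\), so no right categorical cover of a nontrivial group exists, and the trivial group (the terminal category) has \(\mathrm{dcat}=0\).
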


\begin{proof}
The trivial group is the terminal category. If \(N\subseteq G\) is right
categorical, there exists \(s\in G\) such that \(ns=s\) for every
\(n\in N\). Cancellation gives \(n=1\), so \(N=\{1\}\). Hence no family of
right categorical submonoids can cover a nontrivial group.
\end{proof}

\subsection{Examples}

The following examples exhibit the asymmetry of directed LS-category.

\begin{example}\label{ex:Opposite}
Let \(\mathbb I_2\) be the category generated by
\[
\begin{tikzcd}
a_1 \arrow[rd, "f"] & \\
a_2 \arrow[r, "g"'] & b .
\end{tikzcd}
\]
The category \(\mathbb I_2\) has two sources and two maximal chains.
Propositions~\ref{prop:dcat_sources_lower_bound} and
\ref{prop:bounds_maxchains} therefore give
\(\mathrm{dcat}(\mathbb I_2)=1\).

By contrast, \(b\) is initial in \(\mathbb I_2^{\op}\), so
\(\mathrm{dcat}(\mathbb I_2^{\op})=0\). Thus directed LS-category is not
invariant under passage to the opposite category.
\end{example}

\begin{example}\label{ex:arbitrary_gap_LS}
For \(n\geq1\), let \(\mathbb I_n\) be the category generated by
\[
\begin{tikzcd}[column sep=1.5em, row sep=0.4em]
a_1 \arrow[rdd, "f_1"] & \\
\vdots                 & \\
a_i \arrow[r, "f_i"]   & b \\
\vdots                 & \\
a_n \arrow[ruu, "f_n"'] &
\end{tikzcd}
\]
The category has \(n\) sources and \(n\) maximal chains, so
\(\mathrm{dcat}(\mathbb I_n)=n-1\).

On the other hand, \(b\) is terminal. Hence
\(1_{\mathbb I_n}\Rightarrow\overline b\), so
\(\ccat(\mathbb I_n)=0\). Therefore
\[
\mathrm{dcat}(\mathbb I_n)-\ccat(\mathbb I_n)=n-1,
\]
and the difference between the directed and undirected invariants is
unbounded.
\end{example}

\begin{example}\label{ex:P_LS_Dcat}
Let \(\mathcal P\) be the category generated by
\[
\begin{tikzcd}
x
  \arrow[r, "f_1", bend left=49]
  \arrow[r, "f_2"', bend right=49]
&
y
  \arrow[r, "g_1", bend left=49]
  \arrow[r, "g_2"', bend right=49]
&
z
\end{tikzcd}
\]
subject to
\[
g_j\circ f_i=g_l\circ f_k
\qquad
(i,j,k,l\in\{1,2\}).
\]
Thus all four composites from \(x\) to \(z\) coincide.

The object \(x\) is the unique source but is not initial, since
\(f_1\neq f_2\). Hence
\(\mathrm{dcat}(\mathcal P)\neq0\).

For \(i\in\{1,2\}\), let \(\Uc_i\) be the subcategory generated by
\[
\begin{tikzcd}
x \arrow[r, "f_i"]
&
y
  \arrow[r, "g_1", bend left=49]
  \arrow[r, "g_2"', bend right=49]
&
z .
\end{tikzcd}
\]
The subcategories \(\Uc_1\) and \(\Uc_2\) cover \(\mathcal P\). Moreover,
\(x\) is initial in each \(\Uc_i\): the unique morphism \(x\to y\) is
\(f_i\), and the defining relations imply
\(g_1f_i=g_2f_i\), giving a unique morphism \(x\to z\). Thus both
subcategories are right categorical, and
\[
\mathrm{dcat}(\mathcal P)=1.
\]
\end{example}

\section{Directed Sectional Category}
\label{sec:directed_sectional_category}

Sectional category measures the local existence of sections of a functor.
We first recall the strict version introduced in
\cite{Baues-Isaac}.

\begin{definition}\label{def:strict_sectional_category}
The \emph{sectional category} of a functor \(P\colon\E\to\Ba\), denoted by
\(\mathrm{secat}(P)\), is the least integer \(n\geq0\) for which \(\Ba\)
admits a cover \(\{\Uc_0,\ldots,\Uc_n\}\) and, for every \(i\), a functor
\(s_i\colon\Uc_i\to\E\) satisfying \(P\circ s_i=\iota_i\), where
\(\iota_i\colon\Uc_i\hookrightarrow\Ba\) is the inclusion.

The functor \(s_i\) is called a \emph{local section} of \(P\) over
\(\Uc_i\). If no such finite cover exists, we set
\(\mathrm{secat}(P)=\infty\).
\end{definition}

For an arbitrary functor, strict local sections may be too rigid.
Replacing them by homotopy sections gives the categorical analogue of the
\v{S}varc genus considered in
\cite{Baues-Isaac,carcaciacampos2026weakstrongfibrationsfunctors}.

We use right directed homotopies, since a database determines a discrete
opfibration and Grothendieck opfibrations satisfy the right directed
homotopy lifting property. The left-handed theory is dual.

\begin{definition}\label{def:directed_sectional_category}
The \emph{directed sectional category} of a functor
\(P\colon\E\to\Ba\), denoted by \(\mathrm{dsecat}(P)\), is the least
integer \(n\geq0\) for which \(\Ba\) admits a cover
\(\{\Uc_0,\ldots,\Uc_n\}\) and, for every \(i\), a functor
\(s_i\colon\Uc_i\to\E\) together with a natural transformation
\[
\alpha^i\colon P\circ s_i\Rightarrow\iota_i.
\]
If no such finite cover exists, we set \(\mathrm{dsecat}(P)=\infty\).
\end{definition}

The pair \((s_i,\alpha^i)\) is called a \emph{local right homotopy
section}. Equivalently, its defining condition is
\(P\circ s_i\leq_d\iota_i\).

\subsection{Composition}

Directed sectional category is monotone under composition: every local
right homotopy section of a composite \(PF\) induces one of \(P\).

\begin{prop}\label{prop:dsecat_composition}
Let \(P\colon\E\to\Ba\) and \(F\colon\CC\to\E\). Then
\[
\mathrm{dsecat}(P)
\leq
\mathrm{dsecat}(P F).
\]
\end{prop}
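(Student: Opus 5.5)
The plan is to take a minimal cover witnessing \(\mathrm{dsecat}(PF)\) and push each local section forward along \(F\). If \(\mathrm{dsecat}(PF)=\infty\) there is nothing to prove. Otherwise, let \(n=\mathrm{dsecat}(PF)\) and choose a cover \(\{\Uc_0,\ldots,\Uc_n\}\) of \(\Ba\) together with local right homotopy sections \((s_i,\alpha^i)\) of \(PF\). Here \(s_i\colon\Uc_i\to\CC\) and
\[
\alpha^i\colon (PF)\circ s_i\Rightarrow\iota_i .
\]

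The key step is simply to set
\[
s_i'=F\circ s_i\colon\Uc_i\longrightarrow\E .
\]
By associativity of composition, \(P\circ s_i'=(PF)\circ s_i\). The same natural transformation \(\alpha^i\) therefore serves as a transformation \(P\circ s_i'\Rightarrow\iota_i\), with no modification of its components. Hence \((s_i',\alpha^i)\) is a local right homotopy section of \(P\) over \(\Uc_i\). The same cover \(\{\Uc_0,\ldots,\Uc_n\}\) of \(\Ba\) thus witnesses \(\mathrm{dsecat}(P)\leq n\).

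There is no real obstacle, because the base category and the cover are unchanged. The only point to check is that the definition of \(\mathrm{dsecat}\) imposes no condition on the total category beyond the existence of \(s_i\), which is the case in Definition~\ref{def:directed_sectional_category}. The argument is the directed analogue of the fact that a section of a composite yields a section of its second factor.
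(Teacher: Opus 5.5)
Your proposal is correct and is essentially the paper's own proof: you push each local right homotopy section \(s_i\) of \(PF\) forward to \(F\circ s_i\) and reuse the same transformation \(\alpha^i\) over the same cover of \(\Ba\). Handling the case \(\mathrm{dsecat}(PF)=\infty\) separately is a harmless extra detail that the paper leaves implicit.
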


\begin{proof}
If \(s_i\colon\Uc_i\to\CC\) and
\(\alpha^i\colon PF s_i\Rightarrow\iota_i\) are local right homotopy
sections of \(PF\), then \(F s_i\colon\Uc_i\to\E\) and the same
transformation \(\alpha^i\) are local right homotopy sections of \(P\).
\end{proof}

\subsection{Opfibrations and Strictification}

For Grothendieck opfibrations, local right homotopy sections can be
strictified.

\begin{prop}\label{prop:secat_dsecat_comparison}
For every functor \(P\colon\E\to\Ba\),
\[
\mathrm{dsecat}(P)\leq\mathrm{secat}(P).
\]
If \(P\) is a Grothendieck opfibration, then
\[
\mathrm{dsecat}(P)=\mathrm{secat}(P).
\]
\end{prop}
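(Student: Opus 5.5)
The inequality \(\mathrm{dsecat}(P)\leq\mathrm{secat}(P)\) is immediate. A strict local section \(s_i\colon\Uc_i\to\E\) with \(P\circ s_i=\iota_i\) is in particular a local right homotopy section, taking \(\alpha^i\) to be the identity transformation \(1_{\iota_i}\colon P s_i\Rightarrow\iota_i\). Hence every cover witnessing \(\mathrm{secat}(P)\leq n\) also witnesses \(\mathrm{dsecat}(P)\leq n\). If \(\mathrm{secat}(P)=\infty\) there is nothing to prove.

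For the reverse inequality when \(P\) is a Grothendieck opfibration, I will strictify each local right homotopy section using the right lifting property of Proposition~\ref{prop:Grothendieck_implies_directed}, in the natural-transformation form of Remark~\ref{Rem:Lift_Natural}. Suppose \(\{\Uc_0,\ldots,\Uc_n\}\) is a cover with \(s_i\colon\Uc_i\to\E\) and \(\alpha^i\colon P s_i\Rightarrow\iota_i\). Apply the lifting property with \(\CC=\Uc_i\), \(F'=P s_i\), \(G'=\iota_i\), the transformation \(\alpha'=\alpha^i\), and the given lift \(F=s_i\) of \(F'\). This produces a functor \(\widetilde s_i\colon\Uc_i\to\E\) and a natural transformation \(s_i\Rightarrow\widetilde s_i\) lying over \(\alpha^i\), with \(P\widetilde s_i=\iota_i\). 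Thus \(\widetilde s_i\) is a strict local section over the same \(\Uc_i\), and the same cover witnesses \(\mathrm{secat}(P)\leq n\). Taking a minimal cover gives \(\mathrm{secat}(P)\leq\mathrm{dsecat}(P)\), and the infinite case is trivial.

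The only step with real content is the lifting itself. This was already done in Proposition~\ref{prop:Grothendieck_implies_directed}: there, the components of \(\widetilde s_i\) on morphisms are obtained from the universal property of chosen opcartesian lifts of the components \(\alpha^i_c\), and functoriality follows from uniqueness. So I expect no genuine obstacle. The one point to check is the bookkeeping: the lifted functor must satisfy \(P\widetilde s_i=\iota_i\) exactly, and not merely up to isomorphism. This holds because the lifting property gives equality \(PG=G'\) on the nose. As noted in the remark after Proposition~\ref{prop:Grothendieck_implies_directed}, the choice of opcartesian lifts may use the axiom of choice for non-cloven opfibrations.
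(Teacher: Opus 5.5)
Your proposal is correct and follows the paper's argument: the inequality comes from using identity transformations, and strictification comes from the right directed lifting property of Grothendieck opfibrations (Proposition~\ref{prop:Grothendieck_implies_directed}). The only difference is that you apply the lifting in the natural-transformation form of Remark~\ref{Rem:Lift_Natural}, whereas the paper lifts the cylinder \(H\colon\Uc\times\mathbb I_1\to\Ba\) and sets \(\overline{s}=\widetilde H\iota_1\); these two formulations are equivalent.
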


\begin{proof}
Every strict local section is a local right homotopy section, using the
identity transformation. Hence
\(\mathrm{dsecat}(P)\leq\mathrm{secat}(P)\).

Suppose that \(P\) is a Grothendieck opfibration and that
\(s\colon\Uc\to\E\) is a local right homotopy section, with
\(\alpha\colon P s\Rightarrow\iota\). Let
\(H\colon\Uc\times\mathbb I_1\to\Ba\) be the homotopy corresponding to
\(\alpha\). We have a commutative square
\[
\begin{tikzcd}
\Uc \arrow[r, "s"] \arrow[d, "\iota_0"'] &
\E \arrow[d, "P"] \\
\Uc\times\mathbb I_1 \arrow[r, "H"'] &
\Ba .
\end{tikzcd}
\]

By Proposition~\ref{prop:Grothendieck_implies_directed}, \(P\) is a right
directed fibration. Hence \(H\) admits a lift
\(\widetilde H\colon\Uc\times\mathbb I_1\to\E\) satisfying
\(\widetilde H\iota_0=s\). The functor
\(\overline{s}=\widetilde H\iota_1\) satisfies
\[
P\overline{s}
=
P\widetilde H\iota_1
=
H\iota_1
=
\iota,
\]
and is therefore a strict local section. Thus every local right homotopy
section of \(P\) can be strictified.
\end{proof}

\begin{corollary}\label{cor:dsecat_zero_opfibration}
A Grothendieck opfibration \(P\colon\E\to\Ba\) satisfies
\(\mathrm{dsecat}(P)\allowbreak=\allowbreak0\) if and only if it admits a global section.
\end{corollary}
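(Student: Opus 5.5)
The plan is to read the corollary off Proposition~\ref{prop:secat_dsecat_comparison} specialised to the trivial cover. For a Grothendieck opfibration \(P\colon\E\to\Ba\) that proposition gives \(\mathrm{dsecat}(P)=\mathrm{secat}(P)\). It therefore suffices to show that \(\mathrm{secat}(P)=0\) holds exactly when \(P\) admits a global section.

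First I unwind Definition~\ref{def:strict_sectional_category} with \(n=0\). A cover \(\{\Uc_0\}\) of \(\Ba\) by a single subcategory must contain every morphism of \(\Ba\), so \(\Uc_0=\Ba\) and \(\iota_0=1_\Ba\). Hence \(\mathrm{secat}(P)=0\) if and only if there is a functor \(s\colon\Ba\to\E\) with \(P\circ s=1_\Ba\), that is, a global section.

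For a more self-contained argument I would run the two directions separately. If \(s\) is a global section, then \(\{\Ba\}\) together with \(s\) and the identity transformation \(Ps\Rightarrow1_\Ba\) witnesses \(\mathrm{dsecat}(P)=0\). Conversely, suppose \(\mathrm{dsecat}(P)=0\). Then, by the same observation that a one-member cover is all of \(\Ba\), there are \(s\colon\Ba\to\E\) and \(\alpha\colon Ps\Rightarrow1_\Ba\). I would then apply the strictification step in the proof of Proposition~\ref{prop:secat_dsecat_comparison} with \(\Uc=\Ba\). Since \(P\) is a right directed fibration by Proposition~\ref{prop:Grothendieck_implies_directed}, lifting the cylinder \(H\) of \(\alpha\) from \(s\) yields \(\widetilde H\). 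Then \(\overline s=\widetilde H\iota_1\) satisfies \(P\overline s=1_\Ba\), so \(\overline s\) is a strict global section.

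There is no real obstacle. The only point needing care is that a one-member morphism cover must be the whole category \(\Ba\), which follows because every morphism, and hence every identity and object, lies in it. The edge case of empty \(\Ba\) is trivial in both directions, since the empty functor is then a global section.
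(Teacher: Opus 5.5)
Your proposal is correct and matches the paper's proof, which simply invokes Proposition~\ref{prop:secat_dsecat_comparison} to identify \(\mathrm{dsecat}(P)\) with \(\mathrm{secat}(P)\). You also make explicit the step the paper leaves implicit: a one-member morphism cover must be all of \(\Ba\), so \(\mathrm{secat}(P)=0\) holds exactly when \(P\) has a global section.
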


\begin{proof}
This follows immediately from
Proposition~\ref{prop:secat_dsecat_comparison}.
\end{proof}

\subsection{Monotonicity and Homotopy Invariance}

Directed sectional category is monotone with respect to the directed
homotopy preorder. Indeed, a directed deformation from \(P\) to \(P'\)
allows every local right homotopy section of \(P'\) to be regarded as one
of \(P\). We first record this elementary observation and then use it to
obtain an invariance result for functors whose total and base categories
are related by compatible directed comparison data.

\begin{prop}\label{prop:dsecat_monotone_homotopy}
Let \(P,P'\colon\E\to\Ba\) be functors. If \(P\leq_dP'\), then
\[
\mathrm{dsecat}(P)\leq\mathrm{dsecat}(P').
\]
\end{prop}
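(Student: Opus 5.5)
The plan is to transport local right homotopy sections of \(P'\) to local right homotopy sections of \(P\) over the same cover, by vertically composing with the given directed homotopy. If \(\mathrm{dsecat}(P')=\infty\) there is nothing to prove. Otherwise, fix a natural transformation \(\beta\colon P\Rightarrow P'\), which exists because \(P\leq_dP'\). Choose a cover \(\{\Uc_0,\ldots,\Uc_n\}\) of \(\Ba\) with \(n=\mathrm{dsecat}(P')\). For each \(i\), choose a functor \(s_i\colon\Uc_i\to\E\) and a natural transformation \(\alpha^i\colon P'\circ s_i\Rightarrow\iota_i\).

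The key step is to whisker \(\beta\) by \(s_i\). This gives a natural transformation \(\beta s_i\colon P\circ s_i\Rightarrow P'\circ s_i\), with components \(\beta_{s_i(u)}\) for \(u\in\Uc_i\). Composing vertically with \(\alpha^i\) yields
\[
\alpha^i\circ(\beta s_i)\colon P\circ s_i\Rightarrow\iota_i .
\]
In the language of the preorder from Section~\ref{section:Directed_Homotpy}, this says \(Ps_i\leq_dP's_i\leq_d\iota_i\). Here the first inequality is compatibility of \(\leq_d\) with precomposition, and the second is the hypothesis on \(s_i\). Transitivity of \(\leq_d\) then gives \(P\circ s_i\leq_d\iota_i\).

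It follows that each pair \((s_i,\alpha^i\circ\beta s_i)\) is a local right homotopy section of \(P\) over \(\Uc_i\). The same cover therefore witnesses \(\mathrm{dsecat}(P)\leq n=\mathrm{dsecat}(P')\).

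There is no real obstacle here. The only point needing care is the orientation: the hypothesis must be \(P\leq_dP'\), not the reverse, so that \(\beta s_i\) composes on the correct side of \(\alpha^i\). No lifting properties or opfibration hypotheses are needed.
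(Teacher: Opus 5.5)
Your proposal is correct and takes the same approach as the paper. You whisker the directed homotopy \(P\Rightarrow P'\) by each local section \(s_i\) and compose vertically with \(\alpha^i\), so that \(Ps_i\leq_d P's_i\leq_d\iota_i\) and the same cover witnesses \(\mathrm{dsecat}(P)\leq\mathrm{dsecat}(P')\).
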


\begin{proof}
Let \(\eta\colon P\Rightarrow P'\), and suppose that
\(s_i\colon\Uc_i\to\E\) is a local right homotopy section of \(P'\).
Then
\[
Ps_i
\leq_d
P's_i
\leq_d
\iota_i.
\]
By transitivity, \(s_i\) is a local right homotopy section of \(P\).
Thus every cover witnessing \(\mathrm{dsecat}(P')\leq n\) also witnesses
\(\mathrm{dsecat}(P)\leq n\).
\end{proof}

Proposition~\ref{prop:dsecat_monotone_homotopy} compares functors with the same
domain and codomain. We now allow both categories to vary. To transfer
local sections, we require functors between the total categories and
between the bases, together with directed homotopies expressing that the
resulting squares commute up to the prescribed orientation. Mutual right
dominations of the base categories then provide the comparison in both
directions.

\begin{theorem}\label{thr:dsecat_invariance_lax_equivalence}
Consider a diagram
\[
\begin{tikzcd}[column sep=2em, row sep=2.5em]
\E
  \arrow[r, "F_1", bend left=35]
  \arrow[d, "P"']
&
\E'
  \arrow[l, "G_1", bend left=35]
  \arrow[d, "P'"]
\\
\Ba
  \arrow[r, "F_2", bend left=35]
&
\Ba'
  \arrow[l, "G_2", bend left=35]
\end{tikzcd}
\]
together with natural transformations
\[
\lambda\colon P'F_1\Rightarrow F_2P,
\qquad
\mu\colon PG_1\Rightarrow G_2P'.
\]
Suppose that there are also natural transformations
\[
\alpha\colon G_2F_2\Rightarrow1_\Ba,
\qquad
\beta\colon F_2G_2\Rightarrow1_{\Ba'}.
\]
Then
\[
\mathrm{dsecat}(P)=\mathrm{dsecat}(P').
\]
\end{theorem}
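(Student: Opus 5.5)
The plan is to prove the two inequalities \(\mathrm{dsecat}(P)\leq\mathrm{dsecat}(P')\) and \(\mathrm{dsecat}(P')\leq\mathrm{dsecat}(P)\) separately. In each case I will transport a right categorical sectional cover across the diagram, as in the proof of Proposition~\ref{prop:dcat_domination}. The first inequality should use only \(\mu\) and \(\alpha\); the second is its mirror image and uses only \(\lambda\) and \(\beta\). If the right-hand side of either inequality is \(\infty\), there is nothing to prove, so I will assume it is finite.

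For \(\mathrm{dsecat}(P)\leq\mathrm{dsecat}(P')\), take a cover \(\{\Uc'_0,\ldots,\Uc'_n\}\) of \(\Ba'\) with local right homotopy sections \(s'_i\colon\Uc'_i\to\E'\) and \(\alpha'^i\colon P's'_i\Rightarrow\iota'_i\). Set \(\Vc_i=F_2^{-1}(\Uc'_i)\). By Lemma~\ref{lem:inverse_image_cover}, these form a cover of \(\Ba\). Write \(F_2^i\colon\Vc_i\to\Uc'_i\) for the corestricted restriction of \(F_2\), so that \(\iota'_iF_2^i=F_2\iota_{\Vc_i}\). Define
\[
s_i=G_1\,s'_i\,F_2^i\colon\Vc_i\to\E .
\]
Whiskering the given transformations then yields the chain
\[
Ps_i=PG_1s'_iF_2^i
\;\leq_d\;
G_2P's'_iF_2^i
\;\leq_d\;
G_2\iota'_iF_2^i
=
G_2F_2\iota_{\Vc_i}
\;\leq_d\;
\iota_{\Vc_i}.
\]
The three steps are justified by \(\mu\) whiskered by \(s'_iF_2^i\), by \(G_2\alpha'^iF_2^i\), and by \(\alpha\) whiskered by \(\iota_{\Vc_i}\), respectively. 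By transitivity and the monotonicity of composition recorded after Proposition~\ref{prop:directed_homotopy_cylinder}, each \((s_i,\text{composite})\) is a local right homotopy section of \(P\). This gives a cover of \(\Ba\) with \(n+1\) members.

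For the reverse inequality, start from a cover \(\{\Uc_i\}\) of \(\Ba\) with sections \(s_i\) and transformations \(\alpha^i\colon Ps_i\Rightarrow\iota_i\). Set \(\mathcal W_i=G_2^{-1}(\Uc_i)\) and \(s'_i=F_1s_iG_2^i\), where \(G_2^i\) is the corestricted restriction of \(G_2\). The analogous chain is
\[
P's'_i
\;\leq_d\;
F_2Ps_iG_2^i
\;\leq_d\;
F_2\iota_iG_2^i
=
F_2G_2\iota_{\mathcal W_i}
\;\leq_d\;
\iota_{\mathcal W_i},
\]
using \(\lambda\), then \(F_2\alpha^iG_2^i\), then \(\beta\).

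There is no real conceptual obstacle. The point needing care is the bookkeeping of restrictions and corestrictions: one must check that the identity \(\iota'_iF_2^i=F_2\iota_{\Vc_i}\) (and its analogue for \(G_2\)) holds strictly. This is what allows the transformations \(\alpha\) and \(\beta\), which live on all of \(\Ba\) and \(\Ba'\), to be applied after whiskering. One must also observe that the orientations of \(\lambda\) and \(\mu\) are exactly the ones that compose with the sections in the correct direction.
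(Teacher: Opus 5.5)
Your proposal is correct and matches the paper's proof. The paper pulls the cover back along \(F_2\), takes the sections \(G_1s_i'F_{2,i}\), and uses the chain \(\mu\), then the local transformations, then \(\alpha\). For the reverse inequality it only indicates the argument (inverse images under \(G_2\), using \(F_1\), \(\lambda\), \(\beta\)); your explicit chain for that direction fills this in correctly.
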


\begin{proof}
We first show that
\(\mathrm{dsecat}(P)\leq\mathrm{dsecat}(P')\). Let
\(\{\Uc_0,\ldots,\Uc_n\}\) be a cover of \(\Ba'\) admitting local right
homotopy sections
\[
s_i'\colon\Uc_i\longrightarrow\E',
\qquad
P's_i'\leq_d\iota_i'.
\]
Set \(\Vc_i=F_2^{-1}(\Uc_i)\), let
\(F_{2,i}\colon\Vc_i\to\Uc_i\) be the restriction of \(F_2\), and define
\[
s_i=G_1s_i'F_{2,i}\colon\Vc_i\longrightarrow\E.
\]
The given transformations and the transitivity of \(\leq_d\) yield
\[
Ps_i=PG_1s_i'F_{2,i}
\leq_d
G_2P's_i'F_{2,i}
\leq_d
G_2\iota_i'F_{2,i}
=
G_2F_2\iota_i
\leq_d
\iota_i.
\]
Thus \(s_i\) is a local right homotopy section of \(P\). Since the
\(\Vc_i\) cover \(\Ba\), we obtain
\[
\mathrm{dsecat}(P)\leq\mathrm{dsecat}(P').
\]
For the reverse inequality, start with a cover of \(\Ba\), take its
inverse images under \(G_2\), and use \(F_1\), \(\lambda\), and \(\beta\).
\end{proof}

\begin{remark}
The strictly commutative case is recovered by taking \(\lambda\) and
\(\mu\) to be identity transformations.
\end{remark}

The principal application is that the opfibrational replacement of an
arbitrary functor preserves directed sectional category. 

\begin{corollary}\label{cor:dsecat_opfibration_factorization}
Let \(P\colon\E\to\Ba\), and consider the factorization
\[
\begin{tikzcd}
\E \arrow[rr, "P"] \arrow[dr, "P_1"'] &&
\Ba \\
& E_P \arrow[ur, "P_2"'] &
\end{tikzcd}
\]
of Proposition~\ref{prop:directed_opfibration_factorization}. Then
\[
\mathrm{dsecat}(P)
=
\mathrm{dsecat}(P_2)
=
\mathrm{secat}(P_2).
\]
\end{corollary}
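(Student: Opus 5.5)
The second equality is immediate. By Proposition~\ref{prop:directed_opfibration_factorization}, \(P_2\) is a Grothendieck opfibration, so Proposition~\ref{prop:secat_dsecat_comparison} gives \(\mathrm{dsecat}(P_2)=\mathrm{secat}(P_2)\). It remains to prove \(\mathrm{dsecat}(P)=\mathrm{dsecat}(P_2)\). I would apply Theorem~\ref{thr:dsecat_invariance_lax_equivalence} with \(\E'=E_P\), \(\Ba'=\Ba\) and \(P'=P_2\).

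For the base functors I take \(F_2=G_2=1_\Ba\), with \(\alpha\) and \(\beta\) the identity transformations. For the total categories I take \(F_1=P_1\colon\E\to E_P\) and \(G_1=R\colon E_P\to\E\), the functor from the proof of Proposition~\ref{prop:directed_opfibration_factorization}. Thus \(R\) sends \(h\colon P(e)\to b\) to \(e\) and \((g,u)\) to \(g\).

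Next I check the two lax squares. The first requires \(\lambda\colon P_2P_1\Rightarrow P\). Since \(P_2P_1=P\) strictly, \(\lambda\) can be the identity. The second requires \(\mu\colon PR\Rightarrow P_2\), that is, for each object \(h\colon P(e)\to b\) of \(E_P\), a morphism \(PR(h)=P(e)\to b=P_2(h)\). I take \(\mu_h=h\). Naturality with respect to a morphism \((g,u)\colon h\to h'\) reads \(u\circ h=h'\circ P(g)\), which is exactly the defining condition for morphisms of the comma category \(P\downarrow1_\Ba\). The theorem then gives \(\mathrm{dsecat}(P)=\mathrm{dsecat}(P_2)\).

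The only substantive step is seeing that \(\mu\) has the right orientation, from \(PR\) to \(P_2\), which matches the hypothesis \(\mu\colon PG_1\Rightarrow G_2P'\). Notably, the right directed homotopy equivalence \(F_1R\Rightarrow1\) is not needed for this argument.

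As a sanity check, the inequality \(\mathrm{dsecat}(P_2)\leq\mathrm{dsecat}(P)\) also follows from Proposition~\ref{prop:dsecat_composition} applied to \(P=P_2P_1\). For the reverse inequality there is a direct argument: given a local right homotopy section \(s\colon\Uc\to E_P\) of \(P_2\), the functor \(Rs\) is a local right homotopy section of \(P\), via \(PRs\Rightarrow P_2s\Rightarrow\iota\).
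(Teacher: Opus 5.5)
Your proposal is correct and follows the paper's proof. The paper also applies Theorem~\ref{thr:dsecat_invariance_lax_equivalence} with $F_1=P_1$, $G_1=R$, $F_2=G_2=1_\Ba$ and $\lambda$ the identity, and takes $\mu=P_2\eta$, which is exactly your $\mu_h=h$; it then invokes Proposition~\ref{prop:secat_dsecat_comparison} for the second equality.
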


\begin{proof}
Let \(R\colon E_P\to\E\) and
\(\eta\colon P_1R\Rightarrow1_{E_P}\) be those constructed in
Proposition~\ref{prop:directed_opfibration_factorization}. Apply
Theorem~\ref{thr:dsecat_invariance_lax_equivalence} with
\[
F_1=P_1,\qquad
G_1=R,\qquad
F_2=G_2=1_\Ba.
\]
The required compatibility transformations are the identity
\(P_2P_1=P\) and the transformation
\[
PR=P_2P_1R\Rightarrow P_2
\]
obtained by applying \(P_2\) to \(\eta\). Hence
\(\mathrm{dsecat}(P)=\mathrm{dsecat}(P_2)\). Since \(P_2\) is a
Grothendieck opfibration, Proposition~\ref{prop:secat_dsecat_comparison}
gives the second equality.
\end{proof}

\subsection{Component sectional category}\label{subsec:component_sectional}

The opfibrational replacement retains the full directed sectional
obstruction. We now compare it with the discrete model provided by the
comprehensive factorization.

Let \(P\colon\E\to\Ba\) be a functor. Recall from
Subsection~\ref{subsec:discrete_factorization} that its comprehensive
factorization is
\[
\begin{tikzcd}
\E
  \arrow[rr, "P"]
  \arrow[dr, "i_P"']
&&
\Ba
\\
&
\displaystyle\int^\Ba K_P
  \arrow[ur, "q_P"']
&
\end{tikzcd}
\]
where
\[
K_P(b)=\pi_0(P\downarrow b),
\]
the functor \(i_P\) is initial, and \(q_P\) is a discrete opfibration.

The database \(K_P\) retains only the connected components of the comma
categories \(P\downarrow b\). It therefore provides a discrete
approximation to the local section problem for \(P\).

\begin{definition}\label{def:component_sectional_category}
The \emph{component sectional category} of a functor
\(P\colon\E\to\Ba\) is
\[
\mathrm{csecat}(P)
:=
\mathrm{dsecat}(q_P).
\]
\end{definition}

Since \(q_P\) is a discrete opfibration, its directed sectional category
agrees with its strict sectional category. Thus
\(\mathrm{csecat}(P)\) measures the minimum normalized number of
subcategories needed to cover \(\Ba\) so that compatible connected
components of the comma categories \(P\downarrow b\) can be selected
locally.

\begin{proposition}\label{prop:csecat_lower_bound}
For every functor \(P\colon\E\to\Ba\),
\[
\mathrm{csecat}(P)
\leq
\mathrm{dsecat}(P).
\]
\end{proposition}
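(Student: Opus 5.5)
The quickest route uses Proposition~\ref{prop:dsecat_composition}. Since \(P=q_P\circ i_P\) strictly, that proposition applied with the outer functor \(q_P\) and \(F=i_P\) gives
\[
\mathrm{dsecat}(q_P)\leq\mathrm{dsecat}(q_P i_P)=\mathrm{dsecat}(P),
\]
which is exactly \(\mathrm{csecat}(P)\leq\mathrm{dsecat}(P)\) by Definition~\ref{def:component_sectional_category}. Concretely, a local right homotopy section \((s_i,\alpha^i)\) of \(P\) over \(\Uc_i\), with \(\alpha^i\colon Ps_i\Rightarrow\iota_i\), yields the local right homotopy section \((i_Ps_i,\alpha^i)\) of \(q_P\), since \(q_Pi_Ps_i=Ps_i\). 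The same cover therefore witnesses the bound for \(q_P\), and the case \(\mathrm{dsecat}(P)=\infty\) is trivial.

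For a more explicit description, we can also strictify. Because \(q_P\) is a discrete opfibration, Propositions~\ref{prop:unique_lifts_are_opcartesian} and~\ref{prop:secat_dsecat_comparison} show that \((i_Ps_i,\alpha^i)\) can be replaced by a strict section of \(q_P\) over \(\Uc_i\). We get it by transporting along \(\alpha^i\): to \(b\in\Uc_i\) we assign
\[
\bigl(b,[(s_i(b),\alpha^i_b)]\bigr)\in\int^\Ba K_P,
\]
the class of the object \(\alpha^i_b\colon Ps_i(b)\to b\) of \(P\downarrow b\). Naturality of \(\alpha^i\) shows that for \(u\colon b\to b'\) in \(\Uc_i\), the morphism \((s_i(u),u)\) in \(P\downarrow b'\) connects \(u\alpha^i_b\) to \(\alpha^i_{b'}\). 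Hence \(K_P(u)\) sends the class chosen at \(b\) to the class chosen at \(b'\), so these choices form a strict local section.

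I expect no genuine obstacle here. The only point needing care is the identification \(q_Pi_P=P\) on the nose, which holds by construction. The strictification remark is optional and only confirms that \(\mathrm{csecat}\) is computed by strict compatible choices of components.
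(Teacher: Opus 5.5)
Your argument is correct and is exactly the paper's proof: write \(P=q_Pi_P\) and apply Proposition~\ref{prop:dsecat_composition}, so that \(\mathrm{dsecat}(q_P)\leq\mathrm{dsecat}(P)\). The explicit strictification paragraph is a valid optional addition, with one notational slip: in \(P\downarrow b'\) the connecting morphism is just \(s_i(u)\colon(s_i(b),u\alpha^i_b)\to(s_i(b'),\alpha^i_{b'})\), while the pair \((s_i(u),u)\) is its name as a morphism of \(E_P\).
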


\begin{proof}
The comprehensive factorization satisfies \(P=q_Pi_P\). Hence
Proposition~\ref{prop:dsecat_composition} gives
\[
\mathrm{dsecat}(q_P)
\leq
\mathrm{dsecat}(q_Pi_P)
=
\mathrm{dsecat}(P).
\]
\end{proof}

The preceding inequality admits a description purely in terms of limits.

\begin{proposition}\label{prop:csecat_local_limits}
Let \(P\colon\E\to\Ba\) be a functor. Then
\(\mathrm{csecat}(P)\leq n\) if and only if \(\Ba\) admits a cover
\(\{\Uc_0,\ldots,\Uc_n\}\) such that
\[
\lim_{\Uc_i}K_P|_{\Uc_i}
\neq
\varnothing
\]
for every \(i\).
\end{proposition}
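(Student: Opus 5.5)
The plan is to unwind the definition \(\mathrm{csecat}(P)=\mathrm{dsecat}(q_P)\) through three earlier facts, one after another. Since \(q_P\) is a discrete opfibration, Proposition~\ref{prop:unique_lifts_are_opcartesian} shows that it is a Grothendieck opfibration. Proposition~\ref{prop:secat_dsecat_comparison} then gives \(\mathrm{dsecat}(q_P)=\mathrm{secat}(q_P)\). So \(\mathrm{csecat}(P)\leq n\) holds if and only if \(\Ba\) has a cover \(\{\Uc_0,\ldots,\Uc_n\}\) with strict local sections \(s_i\colon\Uc_i\to\int^\Ba K_P\) satisfying \(q_Ps_i=\iota_i\). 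It remains to identify, for a fixed subcategory \(\Uc\subseteq\Ba\), strict sections of \(q_P\) over \(\Uc\) with elements of \(\lim_{\Uc}K_P|_{\Uc}\).

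For this identification, first observe that pulling \(q_P\) back along the inclusion \(\iota\colon\Uc\hookrightarrow\Ba\) yields the category of elements of the restriction. More precisely, functors \(s\colon\Uc\to\int^\Ba K_P\) with \(q_Ps=\iota\) correspond bijectively to sections of \(\pi_{K_P|_\Uc}\colon\int^\Uc K_P|_\Uc\to\Uc\). Such a functor sends each \(u\) to a pair \((u,\xi_u)\) with \(\xi_u\in K_P(u)\). It sends each morphism \(f\colon u\to u'\) of \(\Uc\) to the morphism of \(\int^\Ba K_P\) over \(f\), and this morphism exists (and is unique) exactly when \(K_P(f)(\xi_u)=\xi_{u'}\). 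This is the argument of Proposition~\ref{prop:sections_database_coherent_families}, run verbatim with \(\Ba\) replaced by \(\Uc\) and \(X\) by \(K_P|_\Uc\). Hence local sections over \(\Uc\) are exactly the compatible families \((\xi_u)_{u\in\Uc}\), that is, the elements of the set-theoretic limit \(\lim_\Uc K_P|_\Uc\), described as usual as compatible families in \(\prod_u K_P(u)\).

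Combining the two steps, a cover witnesses \(\mathrm{csecat}(P)\leq n\) exactly when each of its pieces has a non-empty local limit, which proves both implications. I expect no step to be a genuine obstacle. The only points that need care are, first, that functoriality of \(s\) is automatic because morphisms of the category of elements over a given morphism are unique, and second, that the strictification step is legitimate, which relies on \(q_P\) being an opfibration so that Proposition~\ref{prop:secat_dsecat_comparison} applies. Alternatively, one could strictify directly using discreteness, since the lift of \(\alpha\) over \(\Uc\) is uniquely determined.
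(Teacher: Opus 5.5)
Your proposal is correct and follows essentially the same route as the paper: you strictify local right homotopy sections via Proposition~\ref{prop:secat_dsecat_comparison}, using that the discrete opfibration \(q_P\) is a Grothendieck opfibration, and then identify strict sections of \(q_P\) over each \(\Uc_i\) with compatible families, that is, with elements of \(\lim_{\Uc_i}K_P|_{\Uc_i}\). Your version just spells out in more detail the section--limit identification that the paper states in one line.
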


\begin{proof}
Since \(q_P\) is the category-of-elements projection of \(K_P\), a section
of \(q_P\) over \(\Uc_i\) is equivalent to an element of
\[
\lim_{\Uc_i}K_P|_{\Uc_i}.
\]
Moreover, \(q_P\) is a discrete opfibration, so local right homotopy
sections can be strictified by
Proposition~\ref{prop:secat_dsecat_comparison}. The result follows from the
definition of \(\mathrm{csecat}(P)\).
\end{proof}

Suppose now that \(P\) is the Grothendieck opfibration associated with a
functor \(X\colon\Ba\to\cat\). As shown in
Proposition~\ref{prop:comprehensive_grothendieck}, its component database is
naturally isomorphic to
\[
\pi_0X\colon\Ba\longrightarrow\Sets,
\qquad
b\longmapsto\pi_0(X(b)).
\]

\begin{corollary}\label{cor:csecat_grothendieck}
Let \(X\colon\Ba\to\cat\), and let
\[
P_X\colon\int^\Ba X\longrightarrow\Ba
\]
be its Grothendieck opfibration. Then
\[
\mathrm{csecat}(P_X)
=
\mathrm{dsecat}(\pi_{\pi_0X})
\leq
\mathrm{dsecat}(P_X).
\]
\end{corollary}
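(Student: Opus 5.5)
The corollary combines three earlier results, so the plan is to identify the discrete opfibration \(q_{P_X}\) with \(\pi_{\pi_0X}\) and then quote the general inequality.

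\emph{Step one: the equality.} By Definition~\ref{def:component_sectional_category}, \(\mathrm{csecat}(P_X)=\mathrm{dsecat}(q_{P_X})\). By Proposition~\ref{prop:comprehensive_grothendieck}, \(K_{P_X}\cong\pi_0X\) naturally in \(b\in\Ba\). A natural isomorphism of set-valued functors induces an isomorphism of categories of elements
\[
\int^\Ba K_{P_X}\;\cong\;\int^\Ba\pi_0X
\]
over \(\Ba\). That is, there is an isomorphism \(\Phi\) with \(\pi_{\pi_0X}\circ\Phi=q_{P_X}\).

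\emph{Step two: isomorphic functors over \(\Ba\) have the same \(\mathrm{dsecat}\).} I would invoke Theorem~\ref{thr:dsecat_invariance_lax_equivalence} with
\[
F_1=\Phi,\qquad G_1=\Phi^{-1},\qquad F_2=G_2=1_\Ba,
\]
and take every transformation \(\lambda,\mu,\alpha,\beta\) to be an identity. Alternatively, one can argue directly: local sections transfer by composing with \(\Phi\) or \(\Phi^{-1}\), using the same cover and the same transformations. Either way this gives \(\mathrm{dsecat}(q_{P_X})=\mathrm{dsecat}(\pi_{\pi_0X})\).

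\emph{Step three: the inequality.} This is exactly Proposition~\ref{prop:csecat_lower_bound} applied to \(P=P_X\), which gives \(\mathrm{csecat}(P_X)\leq\mathrm{dsecat}(P_X)\).

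The only point that needs care is in step one. Proposition~\ref{prop:comprehensive_grothendieck} asserts that the isomorphism \(K_{P_X}\cong\pi_0X\) is natural, and naturality is what guarantees that the induced map on categories of elements is a functor commuting with the projections. Once that is in place, the rest is formal.
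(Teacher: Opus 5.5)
Your proposal is correct and follows the paper's proof. The paper identifies \(q_{P_X}\) with \(\pi_{\pi_0X}\) via Proposition~\ref{prop:comprehensive_grothendieck}, reads off the equality from Definition~\ref{def:component_sectional_category}, and takes the inequality from Proposition~\ref{prop:csecat_lower_bound}. Your only addition is to make explicit, through Theorem~\ref{thr:dsecat_invariance_lax_equivalence} with identity transformations, why an isomorphism over \(\Ba\) preserves \(\mathrm{dsecat}\), a step the paper leaves implicit.
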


\begin{proof}
The discrete opfibration in the comprehensive factorization of \(P_X\) is
naturally isomorphic to
\[
\pi_{\pi_0X}\colon
\int^\Ba\pi_0X
\longrightarrow
\Ba.
\]
The equality follows from
Definition~\ref{def:component_sectional_category}, and the inequality from
Proposition~\ref{prop:csecat_lower_bound}.
\end{proof}

Combining the opfibrational and comprehensive factorizations gives the
following comparison.

\begin{theorem}\label{thm:component_and_opfibrational_models}
Let \(P\colon\E\to\Ba\) be a functor. Let
\[
\E\xrightarrow{P_1}\E_P\xrightarrow{P_2}\Ba
\]
be its directed opfibrational factorization, and let
\[
\E\xrightarrow{i_P}\int^\Ba K_P\xrightarrow{q_P}\Ba
\]
be its comprehensive factorization. Then
\[
\mathrm{csecat}(P)
=
\mathrm{dsecat}(q_P)
\leq
\mathrm{dsecat}(P)
=
\mathrm{dsecat}(P_2).
\]
\end{theorem}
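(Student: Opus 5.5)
This theorem is an assembly of results already established, so the plan is to read off each relation in the displayed chain from the appropriate earlier statement; no new construction is needed.

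The first equality, \(\mathrm{csecat}(P)=\mathrm{dsecat}(q_P)\), is Definition~\ref{def:component_sectional_category} applied to the comprehensive factorization of Proposition~\ref{prop:comprehensive_factorization}. The inequality \(\mathrm{dsecat}(q_P)\leq\mathrm{dsecat}(P)\) is Proposition~\ref{prop:csecat_lower_bound}. That result rests on the identity \(P=q_Pi_P\) together with the monotonicity under composition from Proposition~\ref{prop:dsecat_composition}. The reason is that a local right homotopy section \(s_i\) of \(P\) yields the section \(i_Ps_i\) of \(q_P\), with the same transformation \(\alpha^i\).

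The final equality, \(\mathrm{dsecat}(P)=\mathrm{dsecat}(P_2)\), is Corollary~\ref{cor:dsecat_opfibration_factorization}. It was obtained from Theorem~\ref{thr:dsecat_invariance_lax_equivalence} with the following data:
\[
F_1=P_1,\qquad G_1=R,\qquad F_2=G_2=1_\Ba,\qquad \alpha=\beta=\mathrm{id},
\]
\[
\lambda=\mathrm{id}\colon P_2P_1\Rightarrow P,\qquad \mu=P_2\eta\colon PR\Rightarrow P_2,
\]
where \(\eta\colon P_1R\Rightarrow1_{E_P}\) is the transformation from Proposition~\ref{prop:directed_opfibration_factorization}.

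For a more self-contained account, I would add a remark. Since both \(P_2\) and \(q_P\) are Grothendieck opfibrations, Proposition~\ref{prop:secat_dsecat_comparison} shows that both extreme terms of the chain are computed by strict local sections. The displayed chain therefore compares two strict section problems: choosing local connected components of the comma categories \(P\downarrow b\), versus choosing actual objects of \(P\downarrow b\) functorially over each \(\Uc_i\).

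I expect no real obstacle. The only point that needs checking is that the two factorizations refer to the same comma categories \(P\downarrow b\). This does not affect the proof, since each relation is proved independently. I would also check that the infinite case is covered: each cited result is stated for values in \(\mathbb{N}\cup\{\infty\}\), so no separate case split is needed.
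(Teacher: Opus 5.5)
Your proposal is correct and matches the paper's proof: the first equality is the definition of \(\mathrm{csecat}\), the inequality is Proposition~\ref{prop:csecat_lower_bound} (via \(P=q_Pi_P\) and Proposition~\ref{prop:dsecat_composition}), and the final equality is Corollary~\ref{cor:dsecat_opfibration_factorization}. Your recollection of the data supplied to Theorem~\ref{thr:dsecat_invariance_lax_equivalence} is also accurate, including the orientation of \(\mu=P_2\eta\colon PR\Rightarrow P_2\).
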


\begin{proof}
The inequality is
Proposition~\ref{prop:csecat_lower_bound}. The final equality follows from
Corollary~\ref{cor:dsecat_opfibration_factorization}.
\end{proof}

Thus every functor admits two canonical opfibrational models. The
Grothendieck opfibration \(P_2\) retains the full directed sectional
obstruction, whereas the discrete opfibration \(q_P\) records only the
obstruction visible at the level of connected components. 

\subsection{Relations with Directed LS-Category}

\begin{prop}\label{inequality_genus_LS}
Let \(P\colon\E\to\Ba\) be surjective on objects. Then
\[
\mathrm{dsecat}(P)\leq\mathrm{dcat}(\Ba).
\]
\end{prop}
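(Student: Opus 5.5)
The plan is to turn a right categorical cover of \(\Ba\) directly into a family of local right homotopy sections of \(P\), using constant functors. If \(\mathrm{dcat}(\Ba)=\infty\) there is nothing to prove. Otherwise, fix a minimal right categorical cover \(\{\Uc_0,\ldots,\Uc_n\}\) of \(\Ba\) with \(n=\mathrm{dcat}(\Ba)\). For each \(i\) this gives an object \(b_i\in\Ba\) and a natural transformation \(\alpha^i\colon\overline{b_i}\Rightarrow\iota_i\), where \(\overline{b_i},\iota_i\colon\Uc_i\to\Ba\).

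Surjectivity of \(P\) on objects is the only point where the hypothesis enters. We use it to choose, for each \(i\), an object \(e_i\in\E\) with \(P(e_i)=b_i\). We then define \(s_i\colon\Uc_i\to\E\) as the constant functor \(\overline{e_i}\), which sends every object to \(e_i\) and every morphism to \(1_{e_i}\).

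Next we check that \(P\circ s_i=\overline{b_i}\) strictly. On objects, \(P(e_i)=b_i\). On morphisms, \(P(1_{e_i})=1_{b_i}\), since \(P\) preserves identities. Consequently \(\alpha^i\) itself serves as a natural transformation \(P s_i\Rightarrow\iota_i\). Each pair \((s_i,\alpha^i)\) is therefore a local right homotopy section in the sense of Definition~\ref{def:directed_sectional_category}. The same family \(\{\Uc_0,\ldots,\Uc_n\}\) then witnesses \(\mathrm{dsecat}(P)\leq n=\mathrm{dcat}(\Ba)\).

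No step is a real obstacle. The only care needed is that the constant functor at \(e_i\) is indeed a functor and that its image under \(P\) is exactly the constant functor at \(b_i\), not merely homotopic to it. Both facts follow from functoriality of \(P\) on identities. Since the cover is finite, the choice of the \(e_i\) needs only finitely many choices.
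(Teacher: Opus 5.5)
Your proposal is correct and follows the paper's proof exactly: take a right categorical cover of \(\Ba\), lift each basepoint \(b_i\) to some \(e_i\in\E\) using surjectivity on objects, and use the constant functor at \(e_i\) as a local section, so that \(\alpha^i\) itself serves as the required transformation \(Ps_i\Rightarrow\iota_i\). Your explicit handling of the case \(\mathrm{dcat}(\Ba)=\infty\) and your check that \(P\) carries the constant functor at \(e_i\) strictly to the constant functor at \(b_i\) are details the paper leaves implicit.
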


\begin{proof}
Let \(\{\Uc_0,\ldots,\Uc_n\}\) be a right categorical cover of \(\Ba\),
with transformations
\(\alpha^i\colon\overline{b_i}\Rightarrow\iota_i\).
Choose \(e_i\in\E\) with \(P(e_i)=b_i\), and let
\(s_i\colon\Uc_i\to\E\) be constant at \(e_i\). Then \(Ps_i\) is constant
at \(b_i\), so \(\alpha^i\colon Ps_i\Rightarrow\iota_i\). Thus the same
cover witnesses \(\mathrm{dsecat}(P)\leq\mathrm{dcat}(\Ba)\).
\end{proof}

For finite posets, it is enough that the fibres over the sources be
non-empty.

\begin{corollary}\label{cor:dsecat_poset_sources}
Let \(P\colon\E\to Q\), where \(Q\) is a finite non-empty poset. If
\(P^{-1}(s)\neq\varnothing\) for every source \(s\) of \(Q\), then
\[
\mathrm{dsecat}(P)\leq\mathrm{dcat}(Q).
\]
\end{corollary}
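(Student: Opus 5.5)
The plan is to reuse the argument of Proposition~\ref{inequality_genus_LS}, except that we work directly with the explicit right categorical cover of a finite poset built in Proposition~\ref{prop:dcat_finite_poset}. Surjectivity on objects enters that earlier proof only through the choice of preimages of the base points $b_i$. Here those base points can be taken to be the sources of $Q$, and by hypothesis their fibres are non-empty.

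First, let $s_1,\ldots,s_r$ be the sources (minimal elements) of $Q$. By Proposition~\ref{prop:dcat_finite_poset}, $\mathrm{dcat}(Q)=r-1$. The full subcategories $\Uc_j$ on the principal upper sets $\uparrow s_j$ form a right categorical cover of $Q$, with transformations $\overline{s_j}\Rightarrow\iota_j$ whose components are the unique morphisms $s_j\to x$.

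Second, use the hypothesis $P^{-1}(s_j)\neq\varnothing$ to choose objects $e_j\in\E$ with $P(e_j)=s_j$. Let $\sigma_j\colon\Uc_j\to\E$ be the constant functor at $e_j$. Then $P\sigma_j=\overline{s_j}$, so the same transformation gives $P\sigma_j\Rightarrow\iota_j$. Each $(\sigma_j,\alpha^j)$ is therefore a local right homotopy section over $\Uc_j$. This cover has $r$ members, so $\mathrm{dsecat}(P)\leq r-1=\mathrm{dcat}(Q)$.

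The only point requiring care is that the comparison passes through this specific minimal cover, whose base points are sources, rather than through an arbitrary minimal right categorical cover, where a base point need not lie in a non-empty fibre. Even this is harmless: in any right categorical cover of a poset, each $c_i$ lies below every element of $\Uc_i$, so replacing $c_i$ by a minimal element below it still gives a valid transformation. I therefore expect no real obstacle; the degenerate cases are excluded because $Q$ is finite and non-empty, so it has at least one source.
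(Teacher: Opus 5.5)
Your proposal is correct and follows essentially the same argument as the paper. Both take the principal upper sets $\uparrow s_j$ of the sources as the cover, choose $e_j\in P^{-1}(s_j)$, and use the constant functor at $e_j$ together with the transformation $\overline{s_j}\Rightarrow\iota_j$, which yields $\mathrm{dsecat}(P)\leq r-1=\mathrm{dcat}(Q)$. Your closing remark about replacing base points by minimal elements is valid but not needed.
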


\begin{proof}
Let \(s_1,\ldots,s_r\) be the sources of \(Q\). The principal upper sets
\(\Uc_i=\uparrow s_i\) form a right categorical cover. Choose
\(e_i\in P^{-1}(s_i)\). The constant functor at \(e_i\) is a local right
homotopy section over \(\Uc_i\), since \(s_i\) is initial in \(\Uc_i\).
Hence \(\mathrm{dsecat}(P)\leq r-1=\mathrm{dcat}(Q)\).
\end{proof}

\subsection{Relations with the \v{S}varc Genus}
\label{subsec:svarc_genus}

To isolate the effect of orienting the local homotopies while retaining the
same notion of cover, we introduce the corresponding undirected invariant.

\begin{definition}\label{def:strong_sectional_category}
Let \(P\colon\E\to\Ba\) be a functor. The \emph{strong normalized
\v{S}varc genus} of \(P\), denoted by \(\mathrm{Sg}(P)\), is the
least integer \(n\geq0\) for which \(\Ba\) admits a cover
\(\{\Uc_0,\ldots,\Uc_n\}\) such that, for every \(i\), there exists a
functor \(s_i\colon\Uc_i\to\E\) for which \(P s_i\) is strongly homotopic
to the inclusion \(\iota_i\colon\Uc_i\hookrightarrow\Ba\). If no such
finite cover exists, we set \(\mathrm{Sg}(P)=\infty\).
\end{definition}

Thus, the local condition is the existence of a zigzag of natural
transformations connecting \(P s_i\) and \(\iota_i\), with no prescribed
orientation.

\begin{remark}
Definition~\ref{def:strong_sectional_category} uses the morphism covers of
Definition~\ref{def:categorical_cover}. It should therefore be
distinguished from categorical versions of the \v{S}varc genus defined
using geometric covers
\cite{Baues-Isaac,carcaciacampos2026weakstrongfibrationsfunctors}.
\end{remark}

\begin{proposition}\label{prop:ssecat_leq_dsecat}
For every functor \(P\colon\E\to\Ba\),
\[
\mathrm{Sg}(P)
\leq
\mathrm{dsecat}(P).
\]
\end{proposition}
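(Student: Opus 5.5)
The argument is a direct comparison of the definitions, following Proposition~\ref{prop:ccat_leq_dcat}. The plan is to show that every cover witnessing \(\mathrm{dsecat}(P)\leq n\) also witnesses \(\mathrm{Sg}(P)\leq n\). Taking a minimal such cover then gives the inequality.

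First, I dispose of the trivial case. If \(\mathrm{dsecat}(P)=\infty\), there is nothing to prove. Otherwise, set \(n=\mathrm{dsecat}(P)\) and choose a cover \(\{\Uc_0,\ldots,\Uc_n\}\) of \(\Ba\) with local right homotopy sections \(s_i\colon\Uc_i\to\E\) and transformations \(\alpha^i\colon P s_i\Rightarrow\iota_i\), as in Definition~\ref{def:directed_sectional_category}.

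Next, I observe that each \(\alpha^i\) is a zigzag of natural transformations of length one between \(Ps_i\) and \(\iota_i\). Strong homotopy is the equivalence relation generated by natural transformations in either direction, so \(Ps_i\) is strongly homotopic to \(\iota_i\). Hence the same functors \(s_i\), over the same cover, satisfy the condition of Definition~\ref{def:strong_sectional_category}, and therefore \(\mathrm{Sg}(P)\leq n=\mathrm{dsecat}(P)\).

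I do not expect any real obstacle. The only point needing care is that both invariants are defined with the same morphism covers of Definition~\ref{def:categorical_cover} and the same normalization. Only the local condition changes between them, and that condition is weaker for \(\mathrm{Sg}\). This is exactly why the cover can be reused without modification.
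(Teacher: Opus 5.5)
Your proposal is correct and follows the paper's proof: each transformation \(P s_i\Rightarrow\iota_i\) is a zigzag of length one, so the same cover and the same local sections witness \(\mathrm{Sg}(P)\leq\mathrm{dsecat}(P)\).
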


\begin{proof}
A local right homotopy section consists of a functor
\(s_i\colon\Uc_i\to\E\) and a natural transformation
\(P s_i\Rightarrow\iota_i\). This is, in particular, a zigzag of natural
transformations between \(P s_i\) and \(\iota_i\). Hence every cover
admitting local right homotopy sections also admits local strong homotopy
sections.
\end{proof}

Combining Proposition~\ref{prop:ssecat_leq_dsecat} with
Proposition~\ref{prop:secat_dsecat_comparison}, we obtain
\[
\mathrm{Sg}(P)
\leq
\mathrm{dsecat}(P)
\leq
\mathrm{secat}(P).
\]

A \emph{Grothendieck bifibration} is a functor that is both a
Grothendieck fibration and a Grothendieck opfibration. In this case, the
three invariants coincide.

\begin{proposition}\label{prop:bifibration_sectional_categories}
Let \(P\colon\E\to\Ba\) be a Grothendieck bifibration. Then
\[
\mathrm{Sg}(P)
=
\mathrm{dsecat}(P)
=
\mathrm{secat}(P).
\]
\end{proposition}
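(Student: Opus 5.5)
The plan is to combine the chain of inequalities already established with one new inequality. By Proposition~\ref{prop:ssecat_leq_dsecat} and Proposition~\ref{prop:secat_dsecat_comparison} we have
\[
\mathrm{Sg}(P)\leq\mathrm{dsecat}(P)\leq\mathrm{secat}(P).
\]
Since \(P\) is in particular a Grothendieck opfibration, the second inequality is already an equality. It therefore suffices to prove \(\mathrm{secat}(P)\leq\mathrm{Sg}(P)\). To do this, I will show that over any subcategory \(\Uc\subseteq\Ba\), a local strong homotopy section can be strictified to a strict local section. The same cover then witnesses the inequality.

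Fix \(s\colon\Uc\to\E\) and a zigzag of natural transformations
\[
Ps=H_0\;\text{---}\;H_1\;\text{---}\;\cdots\;\text{---}\;H_m=\iota,
\]
where each \(H_k\colon\Uc\to\Ba\) and each link is a natural transformation pointing in one of the two directions. I will construct, by induction on \(k\), functors \(s_k\colon\Uc\to\E\) with \(Ps_k=H_k\), starting from \(s_0=s\). The inductive step splits into two cases according to the orientation of the link between \(H_k\) and \(H_{k+1}\).

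If the link is \(H_k\Rightarrow H_{k+1}\), then \(s_k\) is a lift of its source. Since \(P\) is an opfibration, it is a right directed fibration by Proposition~\ref{prop:Grothendieck_implies_directed}. In the form of Remark~\ref{Rem:Lift_Natural}, this gives \(s_{k+1}\) with \(Ps_{k+1}=H_{k+1}\), together with a lifted transformation \(s_k\Rightarrow s_{k+1}\).

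If the link is \(H_{k+1}\Rightarrow H_k\), then \(s_k\) is a lift of its target. Since \(P\) is a Grothendieck fibration, it is a left directed fibration by the dual part of Proposition~\ref{prop:Grothendieck_implies_directed}. The left lifting property then yields \(s_{k+1}\) over \(H_{k+1}\) with a transformation \(s_{k+1}\Rightarrow s_k\).

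After \(m\) steps the induction produces \(s_m\) with \(Ps_m=\iota\), which is a strict local section over \(\Uc\). Applying this to every member of a cover witnessing \(\mathrm{Sg}(P)\leq n\) gives \(\mathrm{secat}(P)\leq n\).

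The only delicate point is that the zigzag must be traversed in a consistent direction, from \(Ps\) towards \(\iota\). This is exactly why both lifting properties are needed: the opfibration property handles forward arrows, whose source is already lifted, and the fibration property handles backward arrows, whose target is already lifted. A one-sided fibration would fail at one of the two orientations. As in the remark following Proposition~\ref{prop:Grothendieck_implies_directed}, choosing the (op)cartesian lifts simultaneously may use the axiom of choice unless \(P\) is cloven. Nothing else in the argument requires checking beyond what the earlier propositions already provide.
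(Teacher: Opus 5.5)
Your proposal is correct and follows essentially the same route as the paper. Like you, the paper strictifies each local strong homotopy section by successively lifting the arrows of the zigzag, using the opfibration structure for forward arrows and the fibration structure for backward ones, and then uses Proposition~\ref{prop:secat_dsecat_comparison} to get $\mathrm{dsecat}(P)=\mathrm{secat}(P)$. The only difference is that the paper cites \cite[Proposition~2.18]{Baues-Isaac} for the strictification, whereas you derive it directly from the right and left directed lifting properties of Proposition~\ref{prop:Grothendieck_implies_directed}, which makes the argument self-contained.
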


\begin{proof}
The strictification argument of
\cite[Proposition~2.18]{Baues-Isaac} shows that, for a bifibration, every
local strong homotopy section can be replaced by a strict local section. The argument successively lifts the arrows in the zigzag, using the
fibrational or opfibrational structure according to their orientation,
until the homotopy section is replaced by a strict one.
Although that result is stated using geometric covers, its proof applies
independently to each member of the cover and therefore remains valid for
the morphism covers used here.  Hence
\[
\mathrm{Sg}(P)=\mathrm{secat}(P).
\]

Moreover, every Grothendieck bifibration is, in particular, a Grothendieck
opfibration. Proposition~\ref{prop:secat_dsecat_comparison} therefore gives
\[
\mathrm{dsecat}(P)=\mathrm{secat}(P),
\]
and the result follows.
\end{proof}

\subsection{Examples}\label{sec:examples_sectional}

\begin{example}\label{example:monoid_action_sectional_category}
Let \(X\colon M\to\Sets\) be a monoid action, as in
Example~\ref{example:monoid_action_database}. A cover of the one-object
category \(M\) is precisely a family of submonoids whose union is \(M\).
Since \(\pi_X\) is a discrete opfibration, its local right homotopy
sections can be strictified.

Consequently, \(\mathrm{dsecat}(\pi_X)\leq r-1\) if and only if there
exist submonoids \(M_1,\ldots,M_r\subseteq M\) and, for each \(i\), an
element \(x_i\in X(\ast)\) such that
\[
M=M_1\cup\cdots\cup M_r
\qquad\text{and}\qquad
m\cdot x_i=x_i
\quad
\text{for every }m\in M_i.
\]
The elements \(x_i\) may be different for different submonoids. Thus,
\(\mathrm{dsecat}(\pi_X)+1\) is the minimum number of submonoids covering
\(M\) such that the action of each \(M_i\) has an \(M_i\)-fixed point. In
particular, \(\mathrm{dsecat}(\pi_X)=0\) if and only if the action of
\(M\) has a global fixed point.
\end{example}

The following example shows the difference between the directed sectional category and the strong \v{S}varc genus.

\begin{example}\label{ex:Sg_dsecat_gap}
Let \(\mathcal P\) be the category of
Example~\ref{ex:P_LS_Dcat}, generated by
\[
\begin{tikzcd}
x
  \arrow[r, "f_1", bend left=49]
  \arrow[r, "f_2"', bend right=49]
&
y
  \arrow[r, "g_1", bend left=49]
  \arrow[r, "g_2"', bend right=49]
&
z
\end{tikzcd}
\]
subject to the relations
\[
g_jf_i=g_lf_k
\qquad
(i,j,k,l\in\{1,2\}).
\]
Recall that \(\mathrm{dcat}(\mathcal P)=1\).

Let \(\Dc\) be the free category generated by
\[
\begin{tikzcd}
x_1
  \arrow[r, "f_1^1"]
  \arrow[rd, "f_1^2" description, bend right=30]
&
y_1 \arrow[rd, "g_1"]
&
\\
x_2
  \arrow[r, "f_2^2"']
  \arrow[ru, "f_2^1" description, bend right=30]
&
y_2 \arrow[r, "g_2"]
&
z .
\end{tikzcd}
\]
Define \(F\colon\Dc\to\mathcal P\) by
\[
F(x_i)=x,\qquad F(y_i)=y,\qquad F(z)=z,
\]
and
\[
F(f_i^j)=f_j,\qquad F(g_i)=g_i.
\]

Since \(F\) is surjective on objects,
Proposition~\ref{inequality_genus_LS} gives
\[
\mathrm{dsecat}(F)
\leq
\mathrm{dcat}(\mathcal P)
=
1.
\]

We claim that \(F\) has no global right homotopy section. Suppose otherwise
that there exist a functor \(s\colon\mathcal P\to\Dc\) and a natural
transformation
\[
\alpha\colon Fs\Rightarrow1_{\mathcal P}.
\]
Since \(x\) is a source, the component
\(\alpha_x\colon F(s(x))\to x\) forces
\(F(s(x))=x\). Hence \(s(x)=x_i\) for some \(i\in\{1,2\}\), and
\(\alpha_x=1_x\).

Naturality with respect to \(f_k\colon x\to y\), for \(k=1,2\), gives
\[
\alpha_y\circ F(s(f_k))=f_k.
\]
If \(F(s(y))=x\), then the existence of both morphisms
\(s(f_k)\colon s(x)\to s(y)\) forces \(s(y)=s(x)\) and
\(s(f_k)=1_{s(x)}\). The preceding equations would then give
\(\alpha_y=f_1=f_2\), a contradiction.

If \(F(s(y))=y\), then \(s(y)=y_j\) for some \(j\in\{1,2\}\), and
\(\alpha_y=1_y\). However, the unique morphism \(x_i\to y_j\) is
\(f_i^j\), whose image under \(F\) is \(f_j\). Thus both naturality
conditions would require
\[
f_j=f_1
\qquad\text{and}\qquad
f_j=f_2,
\]
again a contradiction. Finally, \(F(s(y))=z\) is impossible because there
is no morphism \(z\to y\) in \(\mathcal P\).

Therefore \(F\) has no global right homotopy section, so
\(\mathrm{dsecat}(F)\neq0\). Combining both inequalities yields
\[
\mathrm{dsecat}(F)=1.
\]

On the other hand, let
\(\overline z\colon\mathcal P\to\Dc\) be the constant functor at \(z\).
Then \(F\overline z\) is the constant functor at the terminal object \(z\)
of \(\mathcal P\). Hence there is a natural transformation
\[
1_{\mathcal P}\Rightarrow F\overline z.
\]
Thus \(F\overline z\) is strongly homotopic to \(1_{\mathcal P}\), and
\[
\mathrm{Sg}(F)=0.
\]
Consequently,
\[
\mathrm{Sg}(F)=0
<
\mathrm{dsecat}(F)=1.
\]
\end{example}

\section{Databases and directed sectional category}\label{sec:databases_sectional}

A database \(X\colon\CC\to\Sets\) determines a discrete opfibration
\[
\pi_X\colon\int^\CC X\longrightarrow\CC,
\]
whose sections correspond to globally coherent choices of records. Since
every discrete opfibration is a Grothendieck opfibration by
Proposition~\ref{prop:unique_lifts_are_opcartesian},
Proposition~\ref{prop:secat_dsecat_comparison} gives
\[
\mathrm{dsecat}(\pi_X)=\mathrm{secat}(\pi_X).
\]
Thus every local right homotopy section of \(\pi_X\) can be strictified.
The directed formulation nevertheless remains useful, since it places the
database problem within the homotopy-invariant framework developed in the
preceding section, while the strict formulation gives its concrete
interpretation in terms of coherent local selections.

In this section, we study directed sectional category for the discrete
opfibrations associated with functorial databases. We consider
indecomposable and iterated databases, analyse its behaviour under data
migration, and conclude by characterizing initial objects through the
universal existence of global sections.

\subsection{Connected Components and Indecomposable Databases}
\label{subsec:indecomposable_databases}

Coproducts of databases are computed objectwise. A non-empty database
\(X\colon\CC\to\Sets\) is called \emph{indecomposable} if every
decomposition \(X\cong X_1\coprod X_2\) has an empty summand. For this
notion and its connections with categorical actions, Burnside rings, and
biset functors, see \cite{webb2023bisetfunctorscategories}.

The indecomposable summands of a database are determined by the connected
components of its category of elements.

\begin{prop}\label{prop:database_connected_decomposition}
Let
\[
\int^\CC X
=
\coprod_{\lambda\in\Lambda}\E_\lambda
\]
be the decomposition of the category of elements of \(X\) into connected
components. Then there are indecomposable subfunctors
\(X_\lambda\subseteq X\) such that
\[
X\cong\coprod_{\lambda\in\Lambda}X_\lambda,
\qquad
\int^\CC X_\lambda\cong\E_\lambda.
\]
\end{prop}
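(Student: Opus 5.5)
We will define each summand directly from the corresponding connected component. For \(\lambda\in\Lambda\) and \(c\in\CC\), set
\[
X_\lambda(c)=\{x\in X(c)\mid (c,x)\in\E_\lambda\}.
\]
The first step is to check that this defines a subfunctor of \(X\). If \(f\colon c\to d\) and \(x\in X_\lambda(c)\), then \(f\colon(c,x)\to(d,X(f)(x))\) is a morphism of \(\int^\CC X\). Hence \((d,X(f)(x))\) lies in the same connected component \(\E_\lambda\), and so \(X(f)(x)\in X_\lambda(d)\). The inclusions \(X_\lambda(c)\subseteq X(c)\) are then natural, and \(X_\lambda\) is a subfunctor.

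Next we establish the decomposition. Every object \((c,x)\) lies in exactly one connected component, so for each \(c\) the sets \(X_\lambda(c)\) partition \(X(c)\). Since coproducts of databases are computed objectwise, the induced map \(\coprod_\lambda X_\lambda\to X\) is an objectwise bijection. It is also natural, and therefore an isomorphism.

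For the identification of categories of elements, note that \(\int^\CC X_\lambda\) is the full subcategory of \(\int^\CC X\) on the objects \((c,x)\) with \(x\in X_\lambda(c)\). This is exactly the set of objects of \(\E_\lambda\). A connected component is a full subcategory: any morphism with source in \(\E_\lambda\) has its target in \(\E_\lambda\) as well. Hence \(\int^\CC X_\lambda=\E_\lambda\).

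The main point is indecomposability. First, \(X_\lambda\) is non-empty because \(\E_\lambda\) is non-empty. Now suppose \(X_\lambda\cong Y_1\coprod Y_2\). The category of elements of a coproduct is the coproduct of the categories of elements, since a morphism \(f\colon(c,x)\to(d,y)\) requires \(y=X(f)(x)\), and \(X(f)\) preserves the summands. This gives
\[
\E_\lambda\cong\int^\CC Y_1\ \textstyle\coprod\ \int^\CC Y_2.
\]
Because \(\E_\lambda\) is connected, one of the two categories of elements must be empty, and so the corresponding \(Y_k\) is empty objectwise. The only care needed is to transport the decomposition along the isomorphism \(X_\lambda\cong Y_1\coprod Y_2\), which induces an isomorphism of categories of elements. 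Hence \(X_\lambda\) is indecomposable.
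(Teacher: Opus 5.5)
Your proof is correct and follows the same route as the paper. You define \(X_\lambda(c)\) as the records whose elements lie in \(\E_\lambda\), check closure under \(X(f)\), obtain the objectwise partition, and identify \(\int^\CC X_\lambda\) with \(\E_\lambda\). The one difference is that you spell out the indecomposability step: the category of elements of a coproduct is the coproduct of the categories of elements, transported along the given isomorphism. The paper compresses this into the single remark that \(\E_\lambda\) is connected.
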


\begin{proof}
For \(c\in\CC\), define
\[
X_\lambda(c)
=
\{x\in X(c)\mid(c,x)\in\E_\lambda\}.
\]
If \(f\colon c\to d\) and \(x\in X_\lambda(c)\), the morphism
\[
(c,x)\longrightarrow\bigl(d,X(f)(x)\bigr)
\]
shows that \(X(f)(x)\in X_\lambda(d)\). Hence \(X_\lambda\) is a
subfunctor.

Every object \((c,x)\) belongs to a unique connected component, so
\(X(c)=\coprod_\lambda X_\lambda(c)\), naturally in \(c\). Therefore
\(X\cong\coprod_\lambda X_\lambda\), and by construction
\(\int^\CC X_\lambda\cong\E_\lambda\). Since \(\E_\lambda\) is connected,
\(X_\lambda\) is indecomposable.
\end{proof}

\begin{corollary}\label{cor:indecomposable_connected_elements}
A non-empty database \(X\colon\CC\to\Sets\) is indecomposable if and only if
\(\int^\CC X\) is connected.
\end{corollary}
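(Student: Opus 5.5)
Both directions are proved by comparing decompositions of \(X\) with decompositions of \(\int^\CC X\) into unions of connected components. Proposition~\ref{prop:database_connected_decomposition} gives one half of this comparison; the other half needs a short argument.

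Suppose first that \(\int^\CC X\) is connected, and let \(X\cong X_1\coprod X_2\). Coproducts of databases are computed objectwise, so \(\int^\CC X\cong\int^\CC X_1\coprod\int^\CC X_2\) as categories. To see this, note that an object \((c,x)\) lies over exactly one summand. A morphism \(f\colon(c,x)\to(d,X(f)(x))\) stays inside that summand, because \(X(f)=X_1(f)\coprod X_2(f)\). Hence there are no morphisms between the two pieces, so each piece is a union of connected components. Connectedness then forces one piece to be empty, since a connected category is non-empty and is not a disjoint union of two non-empty full subcategories. If \(\int^\CC X_i=\varnothing\), then every \(X_i(c)\) is empty, so \(X_i\) is the empty database. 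Finally, \(X\) is non-empty by hypothesis, so \(\int^\CC X\) is non-empty as well. Therefore \(X\) is indecomposable.

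Conversely, suppose that \(X\) is indecomposable. Apply Proposition~\ref{prop:database_connected_decomposition} to write \(X\cong\coprod_{\lambda\in\Lambda}X_\lambda\) with \(\int^\CC X_\lambda\cong\E_\lambda\). If \(\Lambda\) had two distinct indices \(\lambda_0\ne\lambda_1\), we could group the summands as \(X\cong X_{\lambda_0}\coprod\bigl(\coprod_{\lambda\neq\lambda_0}X_\lambda\bigr)\). Both summands are non-empty, because every connected component \(\E_\lambda\) is non-empty. This contradicts indecomposability. Moreover \(\Lambda\neq\varnothing\), because \(X\) is non-empty and so \(\int^\CC X\) is non-empty. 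Hence \(\Lambda\) is a singleton and \(\int^\CC X=\E_{\lambda}\) is connected.

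The only delicate point is the first direction, namely checking that the objectwise coproduct splits the category of elements with no morphisms between the two pieces. The proof of Proposition~\ref{prop:database_connected_decomposition} already uses the same reasoning in reverse, so this step should be routine.
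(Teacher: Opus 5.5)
Your proof is correct and follows the paper's route. The paper gives no separate proof of this corollary and treats it as an immediate consequence of Proposition~\ref{prop:database_connected_decomposition}; your forward direction, that indecomposability forces a single component, is exactly that deduction. For the converse you give the splitting $\int^\CC(X_1\coprod X_2)\cong\int^\CC X_1\coprod\int^\CC X_2$ with no morphisms between the pieces, which is the argument the paper leaves implicit when it says each $X_\lambda$ is indecomposable because $\E_\lambda$ is connected.
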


For connected schemas, the existence of a global section can therefore be
checked on the indecomposable summands.

\begin{prop}\label{prop:section_indecomposable_database}
Let \(\CC\) be connected, and let
\(X\cong\coprod_{\lambda\in\Lambda}X_\lambda\) be its decomposition into
indecomposable summands. Then \(\pi_X\) admits a global section if and only
if \(\pi_{X_\lambda}\) admits a global section for some \(\lambda\).
\end{prop}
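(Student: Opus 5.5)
Both directions will go through Proposition~\ref{prop:sections_database_coherent_families}, which identifies a global section of \(\pi_X\) with a coherent family \((x_c)_{c\in\CC}\) satisfying \(X(f)(x_c)=x_d\) for every \(f\colon c\to d\). I will also use the explicit decomposition of Proposition~\ref{prop:database_connected_decomposition}, with \(X_\lambda(c)=\{x\in X(c)\mid(c,x)\in\E_\lambda\}\) and \(X(c)=\coprod_\lambda X_\lambda(c)\).

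The easy direction is \((\Leftarrow)\). A section of \(\pi_{X_\lambda}\) is a coherent family \(x_c\in X_\lambda(c)\subseteq X(c)\). Since \(X_\lambda\) is a subfunctor, \(X(f)\) restricts to \(X_\lambda(f)\), so the same family is coherent for \(X\). Equivalently, one composes the section with the inclusion \(\int^\CC X_\lambda\cong\E_\lambda\hookrightarrow\int^\CC X\), which commutes with the projections to \(\CC\).

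For \((\Rightarrow)\), let \(s\colon\CC\to\int^\CC X\) be a section with \(s(c)=(c,x_c)\). The key step is that the image of \(s\) lies in a single connected component. This holds because \(s\) is a functor and \(\CC\) is connected: any two objects \(c,d\) are joined by a zigzag of morphisms in \(\CC\), and applying \(s\) gives a zigzag joining \(s(c)\) and \(s(d)\) in \(\int^\CC X\). Hence all \(s(c)\) lie in one component \(\E_\lambda\), so \(x_c\in X_\lambda(c)\) for every \(c\). The family \((x_c)\) is then a coherent family for \(X_\lambda\), hence a section of \(\pi_{X_\lambda}\). In functorial terms, \(s\) factors through the full subcategory \(\E_\lambda\cong\int^\CC X_\lambda\) compatibly with the projections.

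No step is genuinely hard. The only points needing care are that connectedness of \(\CC\) is exactly what forces the image of \(s\) into one component, and that the isomorphism \(\int^\CC X_\lambda\cong\E_\lambda\) is over \(\CC\). I will also note that nonemptiness of \(\CC\) is implicit in "connected", so that \(\lambda\) is actually determined.
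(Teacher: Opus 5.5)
Your proposal is correct and follows the same argument as the paper. Connectedness of \(\CC\) forces the image of a section into a single component \(\E_\lambda\cong\int^\CC X_\lambda\), and for the converse you compose with the inclusion. Your extra remarks that the isomorphism is over \(\CC\) and that \(\CC\) must be non-empty are sound refinements of the paper's brief proof.
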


\begin{proof}
The image of a section
\(s\colon\CC\to\int^\CC X\) is contained in a single connected component,
and hence \(s\) factors through some \(\int^\CC X_\lambda\). The converse
follows by composing a section of \(\pi_{X_\lambda}\) with the inclusion
\(\int^\CC X_\lambda\hookrightarrow\int^\CC X\).
\end{proof}

\begin{theorem}\label{prop:dsecat_indecomposable_summands}
Let
\[
X\cong\coprod_{\lambda\in\Lambda}X_\lambda
\]
be the decomposition of a database into indecomposable summands. Then, for
every \(\lambda\in\Lambda\),
\[
\mathrm{dsecat}(\pi_X)
\leq
\mathrm{dsecat}(\pi_{X_\lambda}).
\]
Consequently, if \(\Lambda\) is finite and non-empty, then
\[
\mathrm{dsecat}(\pi_X)
\leq
\min_{\lambda\in\Lambda}
\mathrm{dsecat}(\pi_{X_\lambda}).
\]
\end{theorem}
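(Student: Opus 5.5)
The plan is to obtain the inequality from the composition monotonicity of Proposition~\ref{prop:dsecat_composition}, applied to the inclusion of a summand's category of elements. By Proposition~\ref{prop:database_connected_decomposition}, each \(X_\lambda\) is a subfunctor of \(X\) and \(\int^\CC X_\lambda\cong\E_\lambda\) is a connected component of \(\int^\CC X\). Let
\[
j_\lambda\colon\int^\CC X_\lambda\longrightarrow\int^\CC X
\]
be the induced functor, \((c,x)\mapsto(c,x)\), sending a morphism over \(f\) to the morphism over \(f\). This is well defined because \(X_\lambda(f)\) is the restriction of \(X(f)\).

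The key step is the strict identity \(\pi_X\circ j_\lambda=\pi_{X_\lambda}\). It holds because both functors send \((c,x)\) to \(c\) and a morphism over \(f\) to \(f\). Proposition~\ref{prop:dsecat_composition}, with \(P=\pi_X\) and \(F=j_\lambda\), then gives
\[
\mathrm{dsecat}(\pi_X)\leq\mathrm{dsecat}(\pi_X j_\lambda)=\mathrm{dsecat}(\pi_{X_\lambda}).
\]
Concretely, every cover of \(\CC\) with local strict sections \(s_i\) of \(\pi_{X_\lambda}\) yields local sections \(j_\lambda s_i\) of \(\pi_X\) over the same cover. If \(\mathrm{dsecat}(\pi_{X_\lambda})=\infty\), the inequality is vacuous.

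The second assertion follows at once. Since the inequality holds for every \(\lambda\), it holds for the minimum, which exists in \(\mathbb N\cup\{\infty\}\) whenever \(\Lambda\) is finite and non-empty.

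I expect no real obstacle. The only point needing care is that the isomorphism \(\int^\CC X_\lambda\cong\E_\lambda\) is compatible with the projections to \(\CC\). This is why I define \(j_\lambda\) directly from the subfunctor inclusion \(X_\lambda\subseteq X\), rather than passing through an abstract isomorphism.
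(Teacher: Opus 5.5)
Your proposal is correct and follows the paper's own proof essentially verbatim. Like the paper, you take the functor \(j_\lambda\colon\int^\CC X_\lambda\to\int^\CC X\) induced by the subfunctor inclusion, observe the strict identity \(\pi_X j_\lambda=\pi_{X_\lambda}\), and apply Proposition~\ref{prop:dsecat_composition}; the bound by the minimum then follows immediately.
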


\begin{proof}
The inclusion \(X_\lambda\hookrightarrow X\) induces a functor
\[
j_\lambda\colon
\int^\CC X_\lambda
\longrightarrow
\int^\CC X
\]
over \(\CC\). Since
\(\pi_Xj_\lambda=\pi_{X_\lambda}\),
Proposition~\ref{prop:dsecat_composition} gives
\[
\mathrm{dsecat}(\pi_X)
\leq
\mathrm{dsecat}(\pi_{X_\lambda}).
\]
\end{proof}

\begin{remark}
A local section of \(\pi_X\) over a connected subcategory factors through
a unique indecomposable summand. Different members of a cover may,
however, use different summands. Therefore the preceding inequality need
not be an equality, and \(\mathrm{dsecat}(\pi_X)\) cannot in general be
recovered from the directed sectional category of a single summand.
\end{remark}

\subsection{Iterated Databases}\label{subsec:iterated}

We now apply Proposition~\ref{prop:dsecat_composition} to iterated
functorial databases. A database on the category of elements of another
database can be flattened into a single database on the original schema,
and the corresponding category-of-elements projections agree under this
identification.

Let \(X\colon\CC\to\Sets\), and let
\(Y\colon\int^\CC X\to\Sets\) be a database on its category of elements.
The standard equivalence
\[
\left[\int^\CC X,\Sets\right]
\simeq
[\CC,\Sets]/X
\]
identifies \(Y\) with a database over \(\CC\) equipped with a natural
transformation to \(X\).

Define the \emph{flattened database}
\(\Sigma_XY\colon\CC\to\Sets\) by
\[
(\Sigma_XY)(c)
=
\coprod_{x\in X(c)}Y(c,x).
\]
For \(f\colon c\to d\), set
\[
(\Sigma_XY)(f)(x,y)
=
\bigl(X(f)(x),Y(\overline f_x)(y)\bigr),
\]
where
\(\overline f_x\colon(c,x)\to(d,X(f)(x))\) is the unique morphism over
\(f\) in \(\int^\CC X\).

\begin{prop}\label{prop:iterated_Grothendieck_construction}
There is a canonical isomorphism
\[
\int^{\int^\CC X}Y
\cong
\int^\CC(\Sigma_XY)
\]
under which \(\pi_X\circ\pi_Y\) corresponds to
\(\pi_{\Sigma_XY}\).
\end{prop}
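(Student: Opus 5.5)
The plan is to write down an explicit functor in each direction and check that they are mutually inverse and commute with the projections. First unwind the objects. An object of \(\int^{\int^\CC X}Y\) is a pair \(((c,x),y)\) with \(x\in X(c)\) and \(y\in Y(c,x)\). An object of \(\int^\CC(\Sigma_XY)\) is a pair \((c,(x,y))\) with \((x,y)\in\coprod_{x\in X(c)}Y(c,x)\). Both are therefore triples \((c,x,y)\). On objects, I will define
\[
\Phi\bigl((c,x),y\bigr)=\bigl(c,(x,y)\bigr),
\]
which is visibly a bijection.

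Next, compare morphisms. A morphism \(((c,x),y)\to((d,x'),y')\) in the iterated category of elements is a morphism \(\varphi\colon(c,x)\to(d,x')\) of \(\int^\CC X\) satisfying \(Y(\varphi)(y)=y'\). Because \(\pi_X\) is a discrete opfibration (Definition~\ref{def:discrete_opfibration}), such a \(\varphi\) is the same as a morphism \(f\colon c\to d\) with \(X(f)(x)=x'\). In that case \(\varphi=\overline f_x\). A morphism \((c,(x,y))\to(d,(x',y'))\) in \(\int^\CC(\Sigma_XY)\) is an \(f\colon c\to d\) with
\[
\bigl(X(f)(x),Y(\overline f_x)(y)\bigr)=(x',y').
\]
This is exactly the same pair of conditions. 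So I will set \(\Phi(\overline f_x)=f\), which is a bijection on each hom-set.

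Functoriality then reduces to \(\overline{(gf)}_x=\overline g_{X(f)(x)}\circ\overline f_x\) and \(\overline{(1_c)}_x=1_{(c,x)}\). Both hold by uniqueness of lifts. As a by-product, the same identities give what is needed to check that \(\Sigma_XY\) is a functor, so I will record that check along the way.

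The projection compatibility is immediate. \(\pi_X\pi_Y\) sends \(((c,x),y)\mapsto c\) and \(\overline f_x\mapsto f\), and \(\pi_{\Sigma_XY}\circ\Phi\) does the same. Canonicity holds because \(\Phi\) involves no choices.

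The only step I expect to need care is the identification of morphisms of \(\int^\CC X\) with underlying morphisms \(f\) of \(\CC\). It uses that a morphism \((c,x)\to(d,x')\) in the category of elements is literally an \(f\) with \(X(f)(x)=x'\). The rest is routine bookkeeping.
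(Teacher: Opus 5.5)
Your proposal is correct and matches the paper's proof: you use the same map \(\bigl((c,x),y\bigr)\mapsto\bigl(c,(x,y)\bigr)\), observe that the morphism conditions on both sides coincide by the definition of \(\Sigma_XY\), and check the projections in the same way. The only difference is that you spell out the functoriality checks, including that \(\Sigma_XY\) is itself a functor, which the paper leaves implicit.
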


\begin{proof}
Define
\[
\Theta\bigl((c,x),y\bigr)
=
\bigl(c,(x,y)\bigr).
\]
The morphism conditions in the two categories agree by the definition of
\(\Sigma_XY\), so \(\Theta\) is an isomorphism. Both projections send the
corresponding object to \(c\) and the corresponding morphism to its
underlying morphism in \(\CC\).
\end{proof}

\begin{corollary}\label{cor:dsecat_iterated_database}
For databases \(X\colon\CC\to\Sets\) and
\(Y\colon\int^\CC X\to\Sets\),
\[
\mathrm{dsecat}(\pi_X)
\leq
\mathrm{dsecat}(\pi_{\Sigma_XY}).
\]
Equivalently,
\[
\mathrm{secat}(\pi_X)
\leq
\mathrm{secat}(\pi_{\Sigma_XY}).
\]
\end{corollary}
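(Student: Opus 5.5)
The plan is to reduce the statement to Proposition~\ref{prop:dsecat_composition}, using the identification of Proposition~\ref{prop:iterated_Grothendieck_construction}. Let
\[
\Theta\colon\int^{\int^\CC X}Y\longrightarrow\int^\CC(\Sigma_XY)
\]
be the canonical isomorphism constructed there. It satisfies
\[
\pi_{\Sigma_XY}\circ\Theta=\pi_X\circ\pi_Y .
\]
Since \(\Theta\) is an isomorphism, we have
\[
\pi_{\Sigma_XY}=\pi_X\circ(\pi_Y\circ\Theta^{-1}),
\]
so \(\pi_{\Sigma_XY}\) factors through \(\pi_X\).

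Apply Proposition~\ref{prop:dsecat_composition} with \(P=\pi_X\) and \(F=\pi_Y\circ\Theta^{-1}\). This gives
\[
\mathrm{dsecat}(\pi_X)\leq\mathrm{dsecat}(\pi_X\circ\pi_Y\circ\Theta^{-1})=\mathrm{dsecat}(\pi_{\Sigma_XY}),
\]
which is the first inequality.

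Concretely, a local section \(s_i\) of \(\pi_{\Sigma_XY}\) over \(\Uc_i\) picks pairs \((x_c,y_c)\). Composing with \(\pi_Y\Theta^{-1}\) keeps only the coherent family \((x_c)\), which is a local section of \(\pi_X\) over the same \(\Uc_i\).

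For the equivalent formulation in terms of \(\mathrm{secat}\), note that both \(\pi_X\) and \(\pi_{\Sigma_XY}\) are category-of-elements projections. They are therefore discrete opfibrations, hence Grothendieck opfibrations by Proposition~\ref{prop:unique_lifts_are_opcartesian}. Proposition~\ref{prop:secat_dsecat_comparison} then gives \(\mathrm{dsecat}=\mathrm{secat}\) for each of them, and the two inequalities coincide.

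I expect no real obstacle. The only point needing care is that the identification \(\pi_{\Sigma_XY}\Theta=\pi_X\pi_Y\) is a strict equality of functors rather than one holding up to isomorphism. This is exactly what Proposition~\ref{prop:iterated_Grothendieck_construction} asserts, since both projections send corresponding objects and morphisms to the same data in \(\CC\).
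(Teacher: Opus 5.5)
Your proposal is correct and follows the paper's own proof. You apply Proposition~\ref{prop:dsecat_composition} to \(\pi_X\), transport along the isomorphism of Proposition~\ref{prop:iterated_Grothendieck_construction}, and pass to \(\mathrm{secat}\) because all category-of-elements projections are discrete opfibrations; composing with \(\Theta^{-1}\) to get a strict factorization \(\pi_{\Sigma_XY}=\pi_X\circ(\pi_Y\Theta^{-1})\) is simply a more careful rendering of the paper's ``identified with'' step.
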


\begin{proof}
Proposition~\ref{prop:dsecat_composition} gives
\(\mathrm{dsecat}(\pi_X)\leq
\mathrm{dsecat}(\pi_X\pi_Y)\).
By Proposition~\ref{prop:iterated_Grothendieck_construction},
\(\pi_X\pi_Y\) is identified with \(\pi_{\Sigma_XY}\).
The equivalent statement for \(\mathrm{secat}\) follows because all the
projections involved are discrete opfibrations.
\end{proof}

\subsection{Behaviour under Data Migration}
\label{subsec:data_migration}

Let \(F\colon\CC\to\Dc\) be a functor between database schemas. It induces
three data migration functors
\[
\Sigma_F
\dashv
\Delta_F
\dashv
\Pi_F
\]
\cite{SpivakDataMigration}. Their types are
\[
\Delta_F\colon[\Dc,\Sets]\longrightarrow[\CC,\Sets],
\qquad
\Sigma_F,\Pi_F\colon[\CC,\Sets]\longrightarrow[\Dc,\Sets].
\]

The functor \(\Delta_F\) is given by precomposition:
\[
\Delta_FY=Y\circ F
\]
for every database \(Y\colon\Dc\to\Sets\). Thus, \(\Delta_F\) reindexes a
database on \(\Dc\) along \(F\), retaining only the entity types and
functional relationships visible through the schema \(\CC\).

The left and right migration functors are the left and right Kan
extensions along \(F\):
\[
\Sigma_F=\operatorname{Lan}_F,
\qquad
\Pi_F=\operatorname{Ran}_F.
\]
Pointwise, for a database \(X\colon\CC\to\Sets\) and an object
\(d\in\Dc\), they are given by
\[
(\Sigma_FX)(d)
\cong
\mathop{\operatorname{colim}}_{(F\downarrow d)}X\circ p_d
\]
and
\[
(\Pi_FX)(d)
\cong
\mathop{\operatorname{lim}}_{(d\downarrow F)}X\circ q_d,
\]
where
\[
p_d\colon(F\downarrow d)\longrightarrow\CC,
\qquad
q_d\colon(d\downarrow F)\longrightarrow\CC
\]
are the canonical projection functors.

Hence \(\Sigma_F\) transports data forward by collecting them through
colimits, whereas \(\Pi_F\) transports data forward by imposing the
compatibility conditions encoded by the corresponding limits. 

We now examine how these migrations interact with directed sectional
category. We first study restriction along \(F\), using the behaviour of directed sectional category under pullback.

\begin{prop}\label{prop:dsecat_base_change}
Consider a pullback square
\[
\begin{tikzcd}
F^\ast\E
  \arrow[r, "\widetilde F"]
  \arrow[d, "F^\ast P"']
&
\E
  \arrow[d, "P"]
\\
\CC
  \arrow[r, "F"']
&
\Dc .
\end{tikzcd}
\]
If \(P\) is a Grothendieck opfibration, then
\[
\mathrm{dsecat}(F^\ast P)
\leq
\mathrm{dsecat}(P).
\]
\end{prop}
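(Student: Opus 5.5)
Since \(P\) is a Grothendieck opfibration, the plan is to work with strict local sections, using Proposition~\ref{prop:secat_dsecat_comparison}. Pulling back a strict section along \(F\) then uses only the universal property of the pullback in \(\cat\).

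Suppose \(\mathrm{dsecat}(P)=n<\infty\); otherwise there is nothing to prove. By Proposition~\ref{prop:secat_dsecat_comparison}, \(\mathrm{secat}(P)=n\). So there is a cover \(\{\Uc_0,\ldots,\Uc_n\}\) of \(\Dc\) together with strict local sections \(s_i\colon\Uc_i\to\E\) satisfying \(Ps_i=\iota_i\).

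Set \(\Vc_i=F^{-1}(\Uc_i)\). By Lemma~\ref{lem:inverse_image_cover}, the \(\Vc_i\) form a cover of \(\CC\). Write \(F_i\colon\Vc_i\to\Uc_i\) for the restriction of \(F\), and \(j_i\colon\Vc_i\hookrightarrow\CC\) for the inclusion. The two functors
\[
j_i\colon\Vc_i\to\CC,
\qquad
s_iF_i\colon\Vc_i\to\E
\]
satisfy
\[
F j_i=\iota_iF_i=Ps_iF_i.
\]
By the universal property of the strict pullback \(F^\ast\E\), there is a unique functor \(t_i\colon\Vc_i\to F^\ast\E\) with \((F^\ast P)t_i=j_i\) and \(\widetilde F t_i=s_iF_i\).

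Concretely, \(t_i(c)=(c,s_i(F(c)))\) on objects, and \(t_i(g)=(g,s_i(F(g)))\) on morphisms. Thus \(t_i\) is a strict local section of \(F^\ast P\) over \(\Vc_i\), with the identity transformation serving as the directed homotopy. This gives \(\mathrm{dsecat}(F^\ast P)\leq\mathrm{secat}(F^\ast P)\leq n\).

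There is no serious obstacle. The only delicate point is the strictification step: a local right homotopy section \((s_i,\alpha^i)\) of \(P\) cannot be pulled back directly. Indeed, \(\alpha^i\) lives in \(\Dc\), and there is no evident way to transport a transformation \(Ps_iF_i\Rightarrow\iota_iF_i\) into \(F^\ast\E\) over \(\CC\) without a lifting property. This is exactly where the opfibration hypothesis enters, through Proposition~\ref{prop:Grothendieck_implies_directed}.

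One could also note that \(F^\ast P\) is again an opfibration, so its dsecat equals its secat. However, this is not needed, since the inequality \(\mathrm{dsecat}\leq\mathrm{secat}\) holds for every functor.
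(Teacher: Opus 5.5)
Your proof is correct and follows essentially the same route as the paper: you strictify the local sections of \(P\) via Proposition~\ref{prop:secat_dsecat_comparison}, then pull them back along \(F\) over the inverse-image cover \(\{F^{-1}(\Uc_i)\}\). You also observe that only \(\mathrm{dsecat}(F^\ast P)\leq\mathrm{secat}(F^\ast P)\) is needed, which holds for every functor, so you never need the fact the paper asserts without proof, that \(F^\ast P\) is again an opfibration; this is a small but genuine tidying of the argument.
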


\begin{proof}
Since \(P\) and \(F^\ast P\) are Grothendieck opfibrations, their directed
sectional categories agree with their strict sectional categories.

Let \(\{\Uc_0,\ldots,\Uc_n\}\) be a cover of \(\Dc\) admitting local
sections \(s_i\colon\Uc_i\to\E\) of \(P\). The inverse images
\(F^{-1}(\Uc_i)\) form a cover of \(\CC\), and every \(s_i\) pulls back to
a section
\[
F^\ast s_i\colon F^{-1}(\Uc_i)\longrightarrow F^\ast\E
\]
of \(F^\ast P\). Hence
\[
\mathrm{dsecat}(F^\ast P)
=
\mathrm{secat}(F^\ast P)
\leq
\mathrm{secat}(P)
=
\mathrm{dsecat}(P).
\]
\end{proof}

As a corollary, we obtain the following.\begin{corollary}\label{cor:dsecat_delta_migration}
Let \(F\colon\CC\to\Dc\) be a functor and let
\(X\colon\Dc\to\Sets\) be a database. Then
\[
\mathrm{dsecat}(\pi_{\Delta_FX})
\leq
\mathrm{dsecat}(\pi_X).
\]
\end{corollary}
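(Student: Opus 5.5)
The plan is to realize \(\pi_{\Delta_FX}\) as the pullback of \(\pi_X\) along \(F\) and then invoke Proposition~\ref{prop:dsecat_base_change}.

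First I will identify the pullback \(F^\ast\bigl(\int^\Dc X\bigr)\) of
\[
\pi_X\colon\int^\Dc X\longrightarrow\Dc
\]
along \(F\) concretely. Its objects are pairs consisting of \(c\in\CC\) and \((d,x)\in\int^\Dc X\) with \(F(c)=d\). These are exactly pairs \((c,x)\) with \(x\in X(F(c))=(\Delta_FX)(c)\). A morphism over \(g\colon c\to c'\) is a pair \((g,F(g))\) such that \(F(g)\) is a morphism \((F(c),x)\to(F(c'),x')\) in \(\int^\Dc X\). This means \(X(F(g))(x)=x'\), which is precisely the condition \((\Delta_FX)(g)(x)=x'\) for a morphism \((c,x)\to(c',x')\) in \(\int^\CC\Delta_FX\).

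This gives an isomorphism of categories \(F^\ast\bigl(\int^\Dc X\bigr)\cong\int^\CC\Delta_FX\). Under this isomorphism, \(F^\ast\pi_X\) corresponds to \(\pi_{\Delta_FX}\), since both send \((c,x)\) to \(c\) and a morphism to its underlying morphism \(g\). Because directed sectional category is defined in terms of sections over covers of the base, it is invariant under an isomorphism of total categories over the base. Hence
\[
\mathrm{dsecat}(F^\ast\pi_X)=\mathrm{dsecat}(\pi_{\Delta_FX}).
\]

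Next, \(\pi_X\) is a discrete opfibration, so it is a Grothendieck opfibration by Proposition~\ref{prop:unique_lifts_are_opcartesian}. Proposition~\ref{prop:dsecat_base_change} then gives \(\mathrm{dsecat}(F^\ast\pi_X)\leq\mathrm{dsecat}(\pi_X)\), and the claim follows. The proof of Proposition~\ref{prop:dsecat_base_change} also uses that \(F^\ast P\) is an opfibration. Here this is automatic, because \(\pi_{\Delta_FX}\) is itself a category-of-elements projection and hence a discrete opfibration.

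The only step needing care is the pullback identification, which is routine. As an alternative, one can bypass it and argue directly. Given local sections \((x^i_d)_{d\in\Uc_i}\) of \(\pi_X\) over a cover \(\{\Uc_i\}\), the families \((x^i_{F(c)})_{c\in F^{-1}(\Uc_i)}\) are compatible families for \(\Delta_FX\) by functoriality of \(F\). By Lemma~\ref{lem:inverse_image_cover}, the subcategories \(F^{-1}(\Uc_i)\) cover \(\CC\). Combined with Proposition~\ref{prop:sections_database_coherent_families} and the equality \(\mathrm{dsecat}=\mathrm{secat}\) for discrete opfibrations, this also proves the corollary.
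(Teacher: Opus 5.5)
Your proposal is correct and follows the paper's own proof: you identify \(\pi_{\Delta_FX}\) with the pullback \(F^\ast\pi_X\) via \((c,x)\mapsto(F(c),x)\) and then apply Proposition~\ref{prop:dsecat_base_change}. You verify that pullback identification in more detail than the paper does. Your alternative direct argument, restricting compatible families along \(F\) over the inverse-image cover, is also valid.
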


\begin{proof}
There is a canonical pullback square
\[
\begin{tikzcd}
\int^\CC\Delta_FX
  \arrow[r, "\widetilde F"]
  \arrow[d, "\pi_{\Delta_FX}"']
&
\int^\Dc X
  \arrow[d, "\pi_X"]
\\
\CC
  \arrow[r, "F"']
&
\Dc ,
\end{tikzcd}
\]
where \(\widetilde F(c,x)=(F(c),x)\). Thus
\(\pi_{\Delta_FX}\cong F^\ast\pi_X\), and the result follows from
Proposition~\ref{prop:dsecat_base_change}.
\end{proof}

Thus, restriction along a schema functor cannot increase the sectional
obstruction. This is consistent with the fact that \(\Delta_F\) may forget
some of the compatibility conditions present in the original schema.

The right migration functor has a particularly simple effect on global
sections.

\begin{prop}\label{prop:pi_migration_global_sections}
Let \(F\colon\CC\to\Dc\) be a functor and let
\(X\colon\CC\to\Sets\) be a database. There is a natural bijection
\[
\operatorname{Sect}(\pi_X)
\cong
\operatorname{Sect}(\pi_{\Pi_FX}).
\]
Consequently,
\[
\mathrm{dsecat}(\pi_X)=0
\quad\Longleftrightarrow\quad
\mathrm{dsecat}(\pi_{\Pi_FX})=0.
\]
\end{prop}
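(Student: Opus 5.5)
The bijection will be obtained by chaining three earlier identifications: sections as limits, limits of right Kan extensions, and global sections of opfibrations.

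First, by Proposition~\ref{prop:sections_database_coherent_families}, sections of a category-of-elements projection are the same as elements of the limit. This gives
\[
\operatorname{Sect}(\pi_X)\cong\lim_\CC X,
\qquad
\operatorname{Sect}(\pi_{\Pi_FX})\cong\lim_\Dc \Pi_FX .
\]

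Second, I will show \(\lim_\Dc\Pi_FX\cong\lim_\CC X\) naturally. The most economical route is through adjunctions. Since \(\lim_\CC X\cong\operatorname{Nat}(1_\CC,X)\), where \(1_\CC\) denotes the terminal database, we get
\[
\lim_\Dc\Pi_FX
\cong\operatorname{Nat}(1_\Dc,\Pi_FX)
\cong\operatorname{Nat}(\Delta_F1_\Dc,X)
\cong\operatorname{Nat}(1_\CC,X)
\cong\lim_\CC X .
\]
The middle step uses \(\Delta_F\dashv\Pi_F\). The next step uses that \(\Delta_F1_\Dc=1_\Dc\circ F\) is the terminal functor on \(\CC\), because precomposition preserves the constant singleton functor.

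Equivalently, one could say that right Kan extensions compose: \(\lim_\Dc\) is \(\operatorname{Ran}\) along \(\Dc\to\bullet\), and the composite \(\CC\to\Dc\to\bullet\) is \(\CC\to\bullet\). The adjunction argument is cleaner, and naturality in \(X\) is automatic since every step is natural.

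Third, for the consequence, recall that both \(\pi_X\) and \(\pi_{\Pi_FX}\) are discrete opfibrations, hence Grothendieck opfibrations by Proposition~\ref{prop:unique_lifts_are_opcartesian}. Corollary~\ref{cor:dsecat_zero_opfibration} then says that \(\mathrm{dsecat}=0\) holds exactly when a global section exists. By the bijection, one section set is non-empty if and only if the other is.

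There is one technical point to check, and it is the only step needing care. The pointwise formula for \(\Pi_F\) requires \(\Sets\) to have small limits over the comma categories \(d\downarrow F\). These are small because all categories are assumed small, so \(\Pi_F\) exists and is right adjoint to \(\Delta_F\). Beyond this, I expect no real obstacle.

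If one prefers a direct argument avoiding adjunctions, send a compatible family \((x_c)_{c\in\CC}\) to the family whose \(d\)-component is \((x_c)_{(c,\,d\to Fc)}\in\lim_{d\downarrow F}Xq_d\). The inverse evaluates the \(Fc\)-component at the object \((c,1_{Fc})\). Checking that these maps are mutually inverse is a routine naturality verification.
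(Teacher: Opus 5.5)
Your proposal is correct and follows essentially the same route as the paper: identify global sections with $\operatorname{Nat}(1_\CC,X)$, apply the adjunction $\Delta_F\dashv\Pi_F$ together with $\Delta_F1_\Dc=1_\CC$, and obtain the $\mathrm{dsecat}$ statement because both projections are discrete opfibrations. Your additional explicit pointwise bijection through the comma categories $d\downarrow F$ also checks out, but the paper does not need it.
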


\begin{proof}
Global sections of \(\pi_X\) are naturally identified with natural
transformations \(1_\CC\Rightarrow X\). Using the adjunction
\(\Delta_F\dashv\Pi_F\), we obtain
\[
\operatorname{Nat}(1_\Dc,\Pi_FX)
\cong
\operatorname{Nat}(\Delta_F1_\Dc,X).
\]
Since precomposition preserves the constant singleton database,
\(\Delta_F1_\Dc=1_\CC\). Therefore,
\[
\operatorname{Nat}(1_\Dc,\Pi_FX)
\cong
\operatorname{Nat}(1_\CC,X),
\]
which gives the required bijection. The final assertion follows because
both projections are discrete opfibrations.
\end{proof}

The preceding proposition concerns only the existence of a global section.
It does not, in general, imply that
\(\mathrm{dsecat}(\pi_X)\) and
\(\mathrm{dsecat}(\pi_{\Pi_FX})\) coincide when these invariants are strictly positive.

\subsection{Detecting initial objects through databases}\label{subsec:global_section_initial}

We conclude by characterizing directed category zero in terms of global
sections of databases. A database \(X\colon\CC\to\Sets\) is called
\emph{objectwise non-empty} if \(X(c)\neq\varnothing\) for every
\(c\in\CC\). Its category-of-elements projection is then surjective on
objects. We first construct a canonical database that detects the existence of an initial object.

\begin{lemma}\label{lem:source_database}
Let \(\CC\) be a finite connected acyclic category. Define
\(X_{\operatorname{Src}}\colon\CC\to\Sets\) by
\[
X_{\operatorname{Src}}(c)
=
\coprod_{s\in\operatorname{Src}(\CC)}
\Hom_\CC(s,c),
\qquad
X_{\operatorname{Src}}(u)(s,f)
=
(s,u\circ f).
\]
Then \(X_{\operatorname{Src}}\) is objectwise non-empty, and
\(\pi_{X_{\operatorname{Src}}}\) admits a global section if and only if
\(\CC\) has an initial object.
\end{lemma}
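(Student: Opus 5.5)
Functoriality of \(X_{\operatorname{Src}}\) is immediate from associativity of composition and \(1\circ f=f\), so we only check the two claims.

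\emph{Objectwise non-emptiness.} For each \(c\in\CC\) we need a source \(s\) with \(\Hom_\CC(s,c)\neq\varnothing\). As in the remark preceding Proposition~\ref{prop:dcat_sources_lower_bound}, start at \(c\) and repeatedly choose a non-identity morphism into the current object whenever one exists. By acyclicity no object repeats, and by finiteness the process stops at a source \(s\). Composing the chosen morphisms gives \(s\to c\), or \(1_c\) if \(c\) is already a source.

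\emph{Global section from an initial object.} By Proposition~\ref{prop:sections_database_coherent_families}, a global section of \(\pi_{X_{\operatorname{Src}}}\) is a family \((s_c,f_c)\) with \(f_c\colon s_c\to c\), \(s_c\) a source, and \((s_d,uf_c)=(s_d,f_d)\) for every \(u\colon c\to d\). Suppose \(i\) is initial. Then \(i\) is a source: a morphism \(x\to i\) with \(x\neq i\) would, together with the morphism \(i\to x\), contradict acyclicity. Set \(s_c=i\) and let \(f_c\) be the unique morphism \(i\to c\). Uniqueness of morphisms out of \(i\) gives \(uf_c=f_d\).

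\emph{Initial object from a global section.} Given a section, compatibility along \(u\colon c\to d\) forces \(s_c=s_d\). Since \(\CC\) is connected, every zigzag of morphisms propagates this equality, so all \(s_c\) equal a single source \(s\). The family \(f_c\colon s\to c\) then satisfies \(uf_c=f_d\), i.e.\ it is a natural transformation \(\overline{s}\Rightarrow1_\CC\). Proposition~\ref{prop:acyclic_directed_contractibility}, or its proof, then shows \(s\) is initial. Explicitly, \(f_s=1_s\) by acyclicity, and for any \(g\colon s\to c\) naturality gives \(g=g f_s=f_c\).

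The one delicate point is that the source components must be constant across the category. This is exactly where connectedness enters, by propagating \(s_c=s_d\) along both directions of each zigzag edge. Without connectedness, each connected component could use a different source and the equivalence would fail.
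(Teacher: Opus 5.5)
Your proof is correct and follows the paper's argument essentially step by step. Non-emptiness comes from descending along incoming non-identity morphisms to a source, connectedness forces the source index to be constant, initiality follows from \(f_s=1_s\) together with compatibility, and the converse uses the unique morphisms out of the initial object. You also make explicit a point the paper leaves implicit: an initial object of an acyclic category is a source, so \((i,f_c)\) really lies in \(X_{\operatorname{Src}}(c)\). One typo to fix is that your first statement of the compatibility condition should read \((s_c,uf_c)=(s_d,f_d)\), not \((s_d,uf_c)=(s_d,f_d)\).
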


\begin{proof}
Every object \(c\) receives a morphism from some source: repeatedly follow
incoming non-identity morphisms until the process terminates. Hence
\(X_{\operatorname{Src}}(c)\neq\varnothing\).

Suppose that \(\pi_{X_{\operatorname{Src}}}\) admits a section. It
corresponds to elements
\(x_c=(s_c,f_c)\in X_{\operatorname{Src}}(c)\) satisfying
\[
(s_d,f_d)
=
(s_c,u\circ f_c)
\]
for every \(u\colon c\to d\). Thus \(s_c=s_d\) along every morphism.
Since \(\CC\) is connected, there is a source \(i\) such that \(s_c=i\)
for every \(c\).

We have \(x_i=(i,1_i)\). If \(f\colon i\to c\), compatibility gives
\(x_c=(i,f)\). Therefore any two morphisms \(i\to c\) coincide, while
\(x_c=(i,f_c)\) provides such a morphism. Hence \(i\) is initial.

Conversely, if \(i\) is initial and \(f_c\colon i\to c\) is the unique
morphism, then \(x_c=(i,f_c)\) defines a global section.
\end{proof}

\begin{theorem}\label{prop:dcat_zero_databases}
Let \(\CC\) be a finite connected acyclic category. The following are
equivalent:
\begin{enumerate}
    \item \(\mathrm{dcat}(\CC)=0\);
    \item every objectwise non-empty database
    \(X\colon\CC\to\Sets\) admits a global section.
\end{enumerate}
\end{theorem}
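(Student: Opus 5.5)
The proof reduces both conditions to the existence of an initial object. By Proposition~\ref{prop:dcat_zero}, since \(\CC\) is acyclic (and non-empty, being connected), condition (1) holds if and only if \(\CC\) has an initial object. It therefore suffices to show that (2) is equivalent to \(\CC\) having an initial object.

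For the implication from an initial object to (2), let \(i\) be initial and let \(X\) be objectwise non-empty. Choose any \(x\in X(i)\), and set
\[
x_c=X(f_c)(x),
\]
where \(f_c\colon i\to c\) is the unique morphism. If \(u\colon c\to d\), uniqueness of morphisms out of \(i\) gives \(u\circ f_c=f_d\). Functoriality then yields \(X(u)(x_c)=x_d\). By Proposition~\ref{prop:sections_database_coherent_families}, this family defines a global section of \(\pi_X\). Alternatively, one can use \(\mathrm{dsecat}(\pi_X)\le\mathrm{dcat}(\CC)=0\) from Proposition~\ref{inequality_genus_LS}, since \(\pi_X\) is surjective on objects, and then strictify via Corollary~\ref{cor:dsecat_zero_opfibration}. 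The direct argument is simpler, and I would present that one.

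For the implication from (2) to an initial object, apply (2) to the database \(X_{\operatorname{Src}}\) of Lemma~\ref{lem:source_database}. The lemma shows that this database is objectwise non-empty. Hypothesis (2) then gives a global section of \(\pi_{X_{\operatorname{Src}}}\), and the lemma concludes that \(\CC\) has an initial object. Combining this with Proposition~\ref{prop:dcat_zero} gives \(\mathrm{dcat}(\CC)=0\).

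The only real work, namely detecting the initial object from a section, is already contained in Lemma~\ref{lem:source_database}. The remaining points to check are that the hypotheses match. Finiteness and connectedness are needed for the lemma. Acyclicity is needed for the lemma and for the acyclic clause of Proposition~\ref{prop:dcat_zero}, which identifies homotopically initial objects with initial ones.
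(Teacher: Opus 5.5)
Your proposal is correct and follows essentially the paper's proof. The direction (2)$\Rightarrow$(1) is identical: apply the hypothesis to $X_{\operatorname{Src}}$, invoke Lemma~\ref{lem:source_database}, then Proposition~\ref{prop:dcat_zero}. For (1)$\Rightarrow$(2) the paper uses exactly the alternative you mention, namely $\mathrm{dsecat}(\pi_X)\le\mathrm{dcat}(\CC)=0$ via Proposition~\ref{inequality_genus_LS} followed by Corollary~\ref{cor:dsecat_zero_opfibration}. Your formula $x_c=X(f_c)(x)$ is just the unpacked form of that strictification. The one difference is that you use the acyclic clause of Proposition~\ref{prop:dcat_zero} to get a genuine initial object. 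The paper's route does not need acyclicity for this direction, and neither does your formula with $f_c$ replaced by the components $\alpha_c$ of $\overline{i}\Rightarrow 1_\CC$.
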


\begin{proof}
If \(\mathrm{dcat}(\CC)=0\), then
Proposition~\ref{inequality_genus_LS} gives
\(\mathrm{dsecat}(\pi_X)=0\) for every objectwise non-empty \(X\). Since
\(\pi_X\) is a discrete opfibration,
Corollary~\ref{cor:dsecat_zero_opfibration} gives a global section.

Conversely, apply the hypothesis to \(X_{\operatorname{Src}}\).
Lemma~\ref{lem:source_database} gives an initial object in \(\CC\), and
Proposition~\ref{prop:dcat_zero} yields
\(\mathrm{dcat}(\CC)=0\).
\end{proof}

Thus, for finite connected acyclic schemas, the vanishing of directed
Lusternik--Schnirelmann category is equivalent to a universal solvability
property for databases: every objectwise non-empty instance admits a
globally coherent selection. 

\bibliographystyle{plain}
\bibliography{biblio}

\end{document}